\documentclass[11pt]{amsart}

\usepackage{tikz,amssymb,amsmath,fullpage,listings,amsthm,graphicx}

\definecolor{mypink}{RGB}{255,192,203}
\definecolor{myblue}{RGB}{160,203,251}

\newcommand{\rank}{\mathsf{rank}}
\newcommand{\degree}{\mathsf{degree}}
\newcommand{\sort}{\mathsf{sort}}

\newcommand{\xpara}{\mathsf{xpara}}
\newcommand{\ypara}{\mathsf{ypara}}
\newcommand{\configuration}[2]{{#1 \choose #2}}
\newcommand{\topplingconfiguration}[1]{\Delta^{(#1)}}
\newcommand{\topplingequiv}{\equiv_\Delta}
\newcommand{\park}{\mathsf{park}}
\newcommand{\diracconfiguration}[1]{\epsilon^{(#1)}}
\newcommand{\topplingandpermutingequiv}[1]{\equiv_{\Delta,#1}}
\newcommand{\Ta}{\mathsf{T_a}}
\newcommand{\Tb}{\mathsf{T_b}}

\newcommand{\area}{\mathsf{area}}
\newcommand{\width}{\mathsf{width}}
\newcommand{\height}{\mathsf{height}}
\newcommand{\visited}[1]{V(#1)}
\newcommand{\xarea}{\mathsf{xarea}}
\newcommand{\yarea}{\mathsf{yarea}}
\newcommand{\xrow}{\mathsf{xrow}}
\newcommand{\yrow}{\mathsf{yrow}}
\newcommand{\northstep}{\mathsf{N}}
\newcommand{\eaststep}{\mathsf{E}}

\newtheorem*{thm:rank_formula}{Theorem \ref{thm:rank_formula}}
\newtheorem*{thm:gf_main_formula}{Theorem \ref{thm:gf_main_formula}}
\newtheorem{theorem}{Theorem}[section]
\newtheorem{proposition}[theorem]{Proposition}
\newtheorem{lemma}[theorem]{Lemma}
\newtheorem{claim}{Claim}
\theoremstyle{remark}
\newtheorem{remark}[theorem]{Remark}
\newtheorem{example}[theorem]{Example}

\numberwithin{equation}{section}

\begin{document}

\title{The sandpile model on $K_{m,n}$ and\\ the rank of its configurations}

\author[M. D'Adderio]{Michele D'Adderio}
\address{Universit\'e Libre de Bruxelles (ULB)\\D\'epartement de Math\'ematique\\ Boulevard du Triomphe, B-1050 Bruxelles\\ Belgium}\email{mdadderi@ulb.ac.be}

\author[Y. Le Borgne]{Yvan Le Borgne}
\address{LaBRI, Universit\'e de Bordeaux, CNRS, 351 cours de la Lib\'eration, 33405 Talence, France}\email{yvan.le-borgne@u-bordeaux.fr}

\begin{abstract}
We present an algorithm to compute the rank of a configuration of the
sandpile model for the complete bipartite graph $K_{m,n}$ of
complexity $O(m+n)$. Furthermore, we provide a formula for the generating function of parking sorted configurations on complete bipartite graphs $K_{m,n}$ according to $\rank$, $\degree$, and the sizes $m$ and $n$.

The results in the present
paper are similar to those found in \cite{corileborgne} for the complete graph
$K_{n+1}$, and they rely on the analysis of certain operators on
the stable sorted configurations of $K_{m,n}$ developed in
\cite{addl}.
\end{abstract}

\maketitle

\tableofcontents

\section*{Introduction}

The sandpile model was introduced in 1987 by the physicists Bak, Tang and Wiesenfeld as a first example of a dynamical system displaying the remarkable property of self-organized criticality \cite{bak} (see also \cite{dhar1,dhar}). Since then, this model as well as its several variants (among which the notable \emph{chip-firing game} \cite{merino}) has proved to be a fertile ground from which novel and intriguing results in mathematics have emerged (see e.g. \cite{bjorner,merino}).

In 2007, Baker and Norine, in the celebrated article \cite{baker}, introduced the notion of rank of a configuration of the sandpile model on a graph, which yielded a surprising analogue of the classical Riemann-Roch theorem for divisors on curves. This interesting concept stimulated an increasing amount of research, not just in combinatorics, but even more in algebraic and tropical geometry (see e.g. \cite{caporaso,hladky} for some recent developments).

Unfortunately, computing the combinatorial rank directly from its very definition is nontrivial, even for small examples. Despite the attention that this notion has attracted since its appearance in \cite{baker}, the problem of deciding the complexity of its computation (first attributed to Hendrik Lenstra, cf. \cite{hladky}) has been addressed only recently. Indeed it has been shown in \cite{kiss} that computing the rank of a configuration on a general (in fact even eulerian) graph is a NP-hard problem.

This situation motivates the search for efficient algorithms to compute the rank on some special classes of finite graphs.

One of the first works in this direction is \cite{corileborgne}, where an algorithm of linear complexity (in the number of vertices when we assume constant time for any integer division) to compute the rank on the complete graphs has been described. Moreover, in the same work several enumerative byproducts have been provided, which turn out to have intrinsic combinatorial interest.

In the present work we will give several results about the rank of configurations on the complete bipartite graphs, parallel to the ones proved in \cite{corileborgne} for the complete graph.

Given a connected simple graph $G=(V,E)$, where $V$ is the set of its vertices, a \emph{configuration} on $G$ (a \emph{divisor} in the terminology of \cite{baker}) is simply a function $u:V\to \mathbb{Z}$, while the \emph{degree} of a configuration $u$ is simply the sum of its values, so $\degree(u)\in \mathbb{Z}$. The \emph{rank} of a configuration is an integer $\rank(u)\geq -1$ associated to $u$ (see Section 1 for the definition). In our \emph{sandpile model} we will always distinguish a vertex of $G$, which we will call the \emph{sink}. 

The first of our main results is an algorithm of linear complexity (in the number of vertices) to compute the rank of the configurations on the complete bipartite graphs: see Section 2 for its pseudocode. The analysis of this algorithm relies heavily on the combinatorial study in \cite{addl} of the compact (and in particular stable) sorted configurations on the complete bipartite graph.

The design of our algorithm leads to several enumerative byproducts. One of them is the introduction of the \emph{$r$-vector} $r(u)=(r_1,r_2,\dots,r_n)\in \mathbb{Z}^n$ of a compact \emph{sorted configuration} (i.e. a configuration $u$ up to an automorphism of the graph that fixes the sink) on the complete bipartite graph $K_{m,n}$. 

The second of our main results is an explicit formula for the rank of a sorted configuration on $K_{m,n}$ that is also \emph{parking} (a \emph{reduced} divisor in the terminology of \cite{baker}):
\begin{thm:rank_formula}
Let $u$ be a parking sorted configuration on $K_{m,n}$, let $r(u)=(r_1,r_2,\dots,r_n)$ be its $r$-vector, and let
$u_{a_m}\geq 0$ be the value of $u$ on the sink $a_m$. Let
$$
u_{a_m}+1=nQ+R,\quad \text{ with }Q,R\in \mathbb{N},\,\, \text{
and }0\leq R\lneqq n.
$$
Then
$$
\rank(u)+1=\sum_{i=1}^n\max\{0,Q+\chi(i\leq R)+r_i-1\},
$$
where $\chi(\mathcal{P})$ is $1$ if the proposition $\mathcal{P}$
is true, and $0$ otherwise.
\end{thm:rank_formula}

In turn, a deeper understanding of the combinatorics behind the algorithm will bring our third main result, which is a formula for the generating function of the parking sorted configurations on complete bipartite graphs according to $\rank$, $\degree$, and the sizes of the two parts of the set of vertices: given
$$
\mathcal{F}(x,y,w,h):=\sum_{n\geq 1,m\geq 1}K_{m,n}(x,y)w^mh^n,
$$
where
\begin{equation} \label{eq:intro}
K_{m,n}(x,y)=x^{(m-1)(n-1)}y\widetilde{K}_{m,n}(x^{-1},xy),
\end{equation}
and
$$
\widetilde{K}_{m,n}(d,r):=\sum_{u\text{ parking sorted on }K_{m,n}}d^{\degree(u)}r^{\rank(u)},
$$
the following theorem relates $\mathcal{F}(x,y,w,h)$ with the generating function $P(q;w,h)$ of \emph{parallelogram polyominoes} according to their \emph{area} (counted by $q$), their \emph{width} (counted by $w$) and their \emph{height} (counted by $h$) computed in \cite{bousquetmelouviennot}.
\begin{thm:gf_main_formula}
$$
\mathcal{F}(x,y,w,h)=\frac{(1-xy)(hw-P(x;w,h)P(y;w,h))}{(1-x)(1-y)(1-h-w-P(x;w,h)-P(y;w,h))}.
$$
\end{thm:gf_main_formula}

The paper is organized as follows.

In the first section we recall some basic definitions and results about the sandpile model, but we will restrict our attention mostly on the complete bipartite graph $K_{m,n}$. In particular we will define the notion of the rank of a configuration following \cite{baker}.

In the second section we state a first version of our algorithm to compute the rank of a configuration on $K_{m,n}$.

We will use Sections 3-7 to recall some key results from \cite{dukesleborgne} and \cite{addl}. In particular we will introduce our graphical interpretation of stable sorted configurations on $K_{m,n}$, and the corresponding graphical interpretation of the operators $\varphi$, $\psi$, $\Ta$ and $\Tb$, already studied in \cite{addl}. All these results will be used all along the rest of the paper. Notice that in these sections we will use a notation slightly different (but lighter) from the one used in \cite{addl}.

In Section 8 we will prove the correctness of our algorithm.

In Section 9 we will provide a new formulation of our algorithm that uses the operators $\Ta$ and $\Tb$. We will prove in this section that indeed the two algorithms are equivalent, by providing a graphical interpretation of the original algorithm.

This last analysis will enable us to introduce in Section 10 the useful notion of a cylindric diagram of a parking sorted configuration on $K_{m,n}$. This will provide a very efficient way to compute the rank of such a configuration.

Indeed in Section 11 we will introduce the notion of the $r$-vector of a stable sorted configuration on $K_{m,n}$, which, together with the cylindric diagram, will give us the formula of Theorem \ref{thm:rank_formula}.

In Section 12 we provide a detailed analysis of the complexity of our algorithm, making more explicit some of its steps, proving in this way the announced linear bound.

In Section 13 we introduce two new statistics, $\xpara$ and $\ypara$, on the parking sorted configurations on $K_{m,n}$. The strict relation between the bistatistics $(\xpara,\ypara)$ and $(\degree,\rank)$ is encoded in the change of variables \eqref{eq:intro}. 

In Section 14 we show that the bistatistic $(\xpara,\ypara)$ is symmetrically distributed, and its symmetry is explained by the Riemann-Roch theorem of Baker and Norine \cite[Theorem 1.12]{baker}, cf. Theorem \ref{thm:x_y_symmetry}, but also combinatorially via our graphical interpretation.

Finally in Section 15 we prove the formula in Theorem \ref{thm:gf_main_formula}.

\section{Basic definitions and results}

The \emph{complete bipartite graph} $K_{m,n}$ is the graph $(V,E)$
whose vertex set $V=A_m\sqcup B_n$ is the disjoint union of the
sets $A_m := \{a_1,a_2,\dots,a_m\}$ and $B_n:=
\{b_1,b_2,\dots,b_n\}$ and where there is exactly one unoriented
edge $\{a_i,b_j\}\in E$ for any $a_i\in A_m$ and $b_j \in B_n$.

To work in the sandpile model framework, we will often distinguish one
vertex, that will be called the \emph{sink}. For $K_{m,n}$ we
typically choose $a_m$ to be the sink. For this reason, the notation
$A_{m-1}:=A_m\setminus \{a_m\}$ will be sometimes useful.

A \emph{configuration} $u$ on the graph $K_{m,n}$ is a function
$u:V\to \mathbb{Z}$, i.e. $u\in \mathbb{Z}^V$, that we will also
denote as
$$
u := \configuration{u_{a_1},u_{a_2},\ldots,
u_{a_{m-2}},u_{a_{m-1}}; u_{a_m}}{u_{b_1},u_{b_2},\ldots, u_{b_n}},
$$
where for any vertex
$c_k \in A_m\cup B_n$, $u_{c_k}:=u(c_k)\in\mathbb{Z}$ denotes the
\emph{value} of the configuration $u$ at the vertex $c_k$. Notice the semi-colon which distinguishes the value at the sink
$a_m$.

The \emph{degree} of a configuration $u$ is defined as
$$
\degree(u)
:= \sum_{c_k \in A_m\cup B_n} u_{c_k}.
$$

We need some more notation. For any $c_k\in V$, let $e_{c_k,c_h}$ be
$1$ if $\{c_k,c_h\}\in E$ and $0$ otherwise and let
$d_{c_k}:=\sum_{c_h\in V} e_{c_k,c_h}$ be the degree of the vertex
$c_k$. Also, let us denote by $\diracconfiguration{c_k}\in
\mathbb{Z}^V$ the \emph{dirac configuration} which has value $1$ on
$c_k$ and $0$ elsewhere.

Given a vertex $c_k\in V$, the \emph{toppling operator}
$\Delta^{(c_k)}\in \mathbb{Z}^V$ is defined as
$$
\Delta^{(c_k)}=d_{c_k}\diracconfiguration{c_k}-\sum_{c_h\in
V}e_{c_k,c_h}\diracconfiguration{c_h}.
$$
The \emph{toppling} of the vertex $c_k\in A_m\cup B_n$ in the
configuration $u$ corresponds to computing $u-\Delta^{(c_k)}$, which
means that an amount of $1$ is sent from $c_k$ to every neighbor
along the corresponding edge.
\begin{remark} \label{rem:Delta} In graph theory, the matrix defined by the rows $(\Delta^{(c_k)})_{c_k \in V}$ is called the \emph{laplacian matrix} of the underlying graph. 
It is well known and also clear from the previous observation that
$$
\sum_{c_k\in A_m\cup B_n}\Delta^{(c_k)}=0\in \mathbb{Z}^V.
$$
So for example
$$
\Delta^{(c_k)}=-\mathop{\sum_{c_h\in A_m\cup B_n}}_{c_h\neq
c_k}\Delta^{(c_h)}.
$$
\end{remark}

We say that the configurations $u$ and $v$ are \emph{toppling
equivalent}, also denoted $u\topplingequiv v$, if there exists a
finite sequence of topplings specified by the toppling vertices
$(c_k)_{k=1,\ldots, K}$ such that $v = u-\sum_{k=1}^{K}
\topplingconfiguration{c_k}$.

Using Remark \ref{rem:Delta}, it is easy to prove the following
proposition.
\begin{proposition}[Dhar]
The toppling equivalence is an equivalence relation.
\end{proposition}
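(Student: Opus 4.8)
The plan is to verify the three defining properties of an equivalence relation—reflexivity, symmetry, and transitivity—for the relation $\topplingequiv$, using the additive structure of the toppling vectors and crucially Remark \ref{rem:Delta}. Recall that $u\topplingequiv v$ means there is a finite sequence of toppling vertices $(c_k)_{k=1,\ldots,K}$ with $v=u-\sum_{k=1}^K\topplingconfiguration{c_k}$. So the entire argument reduces to closure properties of the set of integer combinations of the vectors $\topplingconfiguration{c_k}$.

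First I would handle reflexivity: taking the empty sequence (i.e. $K=0$) gives $u=u-0$, so $u\topplingequiv u$. Next I would treat transitivity, which is the most straightforward of the nontrivial cases: if $u\topplingequiv v$ via a sequence $(c_k)_{k=1,\ldots,K}$ and $v\topplingequiv w$ via a sequence $(c'_\ell)_{\ell=1,\ldots,L}$, then concatenating the two sequences yields $w=u-\sum_{k=1}^K\topplingconfiguration{c_k}-\sum_{\ell=1}^L\topplingconfiguration{c'_\ell}$, which exhibits $u\topplingequiv w$ directly.

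The one genuinely interesting step is symmetry, and this is where Remark \ref{rem:Delta} does the work. Suppose $u\topplingequiv v$, so $v=u-\sum_{k=1}^K\topplingconfiguration{c_k}$ and hence $u=v+\sum_{k=1}^K\topplingconfiguration{c_k}$. The difficulty is that a naive reversal would require \emph{subtracting} a toppling vector, i.e. performing an ``untoppling,'' which is not an allowed move. The key trick is to rewrite each $+\topplingconfiguration{c_k}$ as a \emph{nonnegative} (legal) combination of toppling vectors. By the identity in Remark \ref{rem:Delta}, namely $\topplingconfiguration{c_k}=-\sum_{c_h\neq c_k}\topplingconfiguration{c_h}$, we have $+\topplingconfiguration{c_k}=-\sum_{c_h\neq c_k}\topplingconfiguration{c_h}$, so adding $\topplingconfiguration{c_k}$ is the same as subtracting the toppling vector of every \emph{other} vertex. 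Substituting this for each of the $K$ terms expresses $u=v-\sum_{h}m_h\topplingconfiguration{c_h}$ for suitable nonnegative integers $m_h$, which is exactly a valid toppling sequence witnessing $v\topplingequiv u$.

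I expect the main obstacle to be precisely this sign issue in the symmetry step: making sure that after applying Remark \ref{rem:Delta} all the coefficients appearing are genuinely nonnegative, so that the resulting expression corresponds to an honest finite sequence of topplings rather than a formal integer combination. Once the reversal is phrased as ``topple every other vertex instead,'' everything else is bookkeeping, and I would present the three properties in the order reflexivity, transitivity, symmetry to isolate this single idea.
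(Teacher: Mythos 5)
Your proof is correct and follows exactly the route the paper intends: the paper gives no written-out proof but says the proposition follows easily ``using Remark \ref{rem:Delta}'', and your symmetry step---rewriting $+\topplingconfiguration{c_k}$ as $-\sum_{c_h\neq c_k}\topplingconfiguration{c_h}$ so that the reversal becomes an honest finite sequence of topplings---is precisely that intended use, with reflexivity and transitivity handled by the empty sequence and concatenation as expected.
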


A vertex $c_k$ is \emph{unstable} in a configuration $u$ if
$u_{c_k}\geq d_{c_k}$, otherwise this vertex is \emph{quasi-stable}.
Toppling once an unstable vertex $c_k$ preserves the non-negativity of its value, as the only decremented value is at vertex $c_k$.  Such a
toppling is called a \emph{legal} toppling. A configuration is
\emph{quasi-stable} if any vertex distinct from the sink is quasi-stable.  The
\emph{relaxation process} of a configuration $u$ consists of iteratively performing a
legal toppling of an unstable vertex distinct from the sink, as long as there exists one.

The following proposition is due to Dhar \cite{dhar} (cf. also \cite[Proposition 2.1]{corirossin}).
\begin{proposition}[Dhar]
Given any configuration $u$, the relaxation process terminates on a
quasi-stable configuration independent of the ordering of legal topplings.
\end{proposition}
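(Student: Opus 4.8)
The plan is to prove the proposition in two parts: first that the relaxation process \emph{terminates}, i.e.\ no infinite legal toppling sequence exists, and then that the quasi-stable configuration reached is \emph{independent of the order} of the legal topplings.

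For termination I would exhibit a bounded monovariant. The key preliminary observation is that along a legal relaxation every vertex value stays bounded below: a vertex loses chips only when it is toppled, a legal toppling is performed at $c_k$ only when $u_{c_k}\ge d_{c_k}$ (after which, as already noted, its value remains $\ge 0$), and between its own topplings a vertex only receives chips; hence the value at $c_k$ never drops below $\min\{0,u_{c_k}\}$, its initial value or $0$. Consequently, for any set $W\subseteq A_m\cup B_n$ the quantity $\sum_{c_k\in W}u_{c_k}$ is bounded below throughout the process. Now suppose for contradiction that some legal sequence is infinite, and let $S$ be the set of non-sink vertices toppled infinitely often; then $S\neq\emptyset$ and $a_m\notin S$. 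Discarding a finite prefix, we may assume only vertices of $S$ are ever toppled. Since $K_{m,n}$ is connected there is an edge joining some $v\in S$ to some $w\notin S$. In this tail no vertex outside $S$ is toppled, so $\sum_{c_k\in S}u_{c_k}$ can only decrease (each $S$-toppling sends chips out of $S$ and $S$ receives none from outside), and it strictly decreases by at least $1$ every time $v$ is toppled, since $v$ leaks the chip $e_{v,w}=1$ to $w\notin S$. As $v\in S$ is toppled infinitely often, this sum tends to $-\infty$, contradicting its lower bound. Hence every legal relaxation sequence is finite.

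For order independence I would use that individual topplings commute as vector operations. Viewing relaxation as the rewriting relation $u\to u-\Delta^{(c_k)}$ over legal topplings, I first check local confluence: if two distinct non-sink vertices $v,w$ are both unstable in $u$, then toppling $v$ does not stabilize $w$ (firing $v$ changes $u_w$ by $e_{v,w}\ge 0$, so $w$ stays unstable), symmetrically for $w$, and $u-\Delta^{(v)}-\Delta^{(w)}=u-\Delta^{(w)}-\Delta^{(v)}$ because the $\Delta^{(c_k)}$ are fixed integer vectors. Thus the two one-step relaxations from $u$ can each be completed by one further legal toppling to the common configuration $u-\Delta^{(v)}-\Delta^{(w)}$, which is exactly the diamond property. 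Combining termination with local confluence, Newman's Lemma yields global confluence, so $u$ relaxes to a unique quasi-stable normal form.

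I expect the termination step to be the main obstacle, since order independence is nearly immediate once topplings are recognized as commuting vector additions; the delicate point there is only that configurations may take negative values, which the lower-bound observation above handles. A more self-contained route to the second part, avoiding Newman's Lemma, is the least-action property: if $(c_{k_1},\dots,c_{k_p})$ is any legal sequence and $(c_{h_1},\dots,c_{h_q})$ is a maximal one, then the multiset of the former is contained in that of the latter, proved by induction on $p$ using that a vertex fireable in $u$ stays fireable after toppling others (which only add chips to it). Applying this to two maximal sequences shows their toppling multisets agree, and since the final configuration is determined by the toppling counts via $v=u-\sum_{c_k}x_{c_k}\Delta^{(c_k)}$, the same quasi-stable configuration is reached.
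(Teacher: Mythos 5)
Your proposal is correct. Note that the paper itself does not prove this proposition: it is attributed to Dhar and cited to \cite{dhar} and to \cite[Proposition 2.1]{corirossin}, so there is no in-paper argument to compare against; what you have written is essentially the standard proof found in those references. Both halves of your argument are sound. For termination, the two points that matter are handled correctly: the lower bound $u_{c_k}\geq \min\{0,u_{c_k}^{\mathrm{init}}\}$ throughout the process (a vertex is non-negative immediately after any legal toppling and only gains chips in between), and the fact that the sink is never toppled, so the set $S$ of infinitely-toppled vertices has a non-empty complement, which together with connectedness produces the leaking edge and the contradiction with the lower bound. Without a distinguished untoppled vertex this argument would fail (on a sinkless graph one can topple forever), so it is good that you invoked $a_m\notin S$ explicitly. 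For order independence, the diamond property plus Newman's Lemma is a clean route, and your observation that toppling $v$ cannot stabilize a distinct unstable $w$ (its value changes by $e_{v,w}\geq 0$) is exactly the needed local confluence; the alternative least-action argument you sketch is the other classical proof and is also complete in its essentials. Either variant suffices, so the proof stands as a self-contained replacement for the citation.
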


A configuration $u$ is \emph{non-negative} if $u_{c_k}\geq 0$ for
every $c_k \in A_m \cup B_n$.

A configuration $u$ is \emph{non-negative outside the sink} if for any
vertex $c_k \in A_{m-1}\cup B_n$, hence any $c_k$ distinct from the
sink $a_m$, the value $u_{c_k}$ is non-negative. Otherwise, the
configuration $u$ has a negative value outside the sink.

In this paper, we define a \emph{stable configuration} to be a quasi-stable configuration which is also non-negative outside the sink.
\begin{remark}
Notice that sometimes in the literature what we called ``quasi-stable configuration'' is called ``stable configuration''.
\end{remark}

A configuration $u$ is \emph{effective} if $u$ is toppling equivalent
to a non-negative configuration.

The \emph{rank} of a configuration $u$ is defined as
$$
\rank(u) := -1+\min\{ \degree(f)\mid \mbox{$f$ is non-negative and
$u-f$ is non-effective}\}.
$$
In other words, the rank incremented by one is the minimal
cumulative decrease of values in the configuration $u$ needed to
reach a non-effective configuration.

It is clear from the definitions that if $u\topplingequiv v$ then
$\rank(u)=\rank(v)$.

A non-negative configuration $f$ such that $u-f$ is non-effective
and $\degree(f)=\rank(u)+1$ is called a \emph{proof for the rank} of $u$.

A configuration $u$ is \emph{parking} (with respect to the sink
$a_m$) if $u$ is non-negative outside the sink and for any
non-empty subset $C$ of $A_{m-1}\cup B_n$ the configuration
$u-\Delta^{(C)}$ has a negative value outside the sink, where
$$
\Delta^{(C)}:= \sum_{c_k \in C} \topplingconfiguration{c_k}.
$$

For a proof of the following proposition see \cite[Proposition 3.1]{baker} or \cite[Corollary 33]{corileborgne}.
\begin{proposition}[Dhar]
Any configuration $u$ is toppling equivalent to exactly one
parking configuration (with respect to the sink $a_m$), which we
will denote by $\park(u)$.
\end{proposition}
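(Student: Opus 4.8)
The plan is to prove the two halves of the statement --- \emph{existence} of a parking representative and \emph{uniqueness} of it --- separately, since they call for quite different ideas.

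For existence, I would first produce some representative $w\topplingequiv u$ that is non-negative outside the sink. Such a $w$ exists because $K_{m,n}$ is connected: the reduced Laplacian obtained by deleting the row and column of $a_m$ is invertible with non-negative inverse, which lets one solve for a firing vector adjusting all non-sink values to be non-negative (alternatively one fires the set of non-sink vertices carrying a negative value enough times). Starting from such a $w$, I would run the reduction: as long as there is a non-empty $C\subseteq A_{m-1}\cup B_n$ with $w-\Delta^{(C)}$ still non-negative outside the sink, replace $w$ by $w-\Delta^{(C)}$. Every step preserves the toppling class of $u$ and non-negativity outside the sink, and when it halts no such $C$ exists, which is exactly the parking condition. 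The one point needing care is termination, for which I would exhibit a monovariant (the configuration cannot be fired forever while keeping all non-sink values non-negative at fixed degree); equivalently, this whole step can be repackaged as Dhar's burning algorithm.

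For uniqueness --- which I expect to be the crux --- suppose $u$ and $v$ are both parking with $u\topplingequiv v$, so $v=u-\sum_{c}\sigma(c)\topplingconfiguration{c}$ for some $\sigma\in\mathbb{Z}^V$; write the right-hand correction as $\Delta^{(\sigma)}$. By Remark \ref{rem:Delta} I may add any constant to $\sigma$ without changing $\Delta^{(\sigma)}$, so I normalize $\min_c\sigma(c)=0$ and assume, for contradiction, that $\sigma\not\equiv 0$. The key computation is that for a vertex $c$ in the bottom level set $S:=\{c:\sigma(c)=0\}$ one has $v_c=u_c+\sum_{c'\sim c}\sigma(c')$, where every neighbour $c'\notin S$ contributes $\sigma(c')\geq 1$; symmetrically, writing $M:=\max_c\sigma(c)\geq 1$ and $A:=\{c:\sigma(c)=M\}$, for $c\in A$ one finds $u_c=v_c+\sum_{c'\sim c,\,c'\notin A}\bigl(M-\sigma(c')\bigr)$, where every neighbour $c'\notin A$ contributes $M-\sigma(c')\geq 1$.

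I would then split into two cases according to the position of the sink. If $a_m\notin S$, then $S$ is a non-empty subset of $A_{m-1}\cup B_n$, and the first identity together with $u_c\geq 0$ (valid since $c\neq a_m$) gives $v_c\geq |\{c'\sim c:c'\notin S\}|$ for every $c\in S$; this says precisely that $v-\Delta^{(S)}$ is still non-negative outside the sink, contradicting that $v$ is parking. If instead $a_m\in S$, then since $\sigma\not\equiv 0$ the top set $A$ is a non-empty subset of $A_{m-1}\cup B_n$ not containing the sink, and the symmetric identity together with $v_c\geq 0$ shows that $u-\Delta^{(A)}$ is non-negative outside the sink, contradicting that $u$ is parking. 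Either way we reach a contradiction, forcing $\sigma$ to be constant and hence $u=v$. The main obstacle is exactly this uniqueness step: one must choose the correct level set to fire (the bottom set $S$ against $v$, the top set $A$ against $u$) and verify that the mass received from the fired neighbours always dominates the outgoing deficit, so that the fired configuration stays non-negative outside the sink and thereby violates the parking condition.
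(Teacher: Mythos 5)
A preliminary remark: the paper itself never proves this proposition; it defers to \cite[Proposition 3.1]{baker} and \cite[Corollary 33]{corileborgne}, so your attempt can only be measured against the standard argument in those references. Your uniqueness half --- which you rightly identify as the crux --- is complete and correct, and it is in substance exactly the Baker--Norine level-set argument: writing $v=u-\sum_c\sigma(c)\Delta^{(c)}$ (legitimate with $\sigma\in\mathbb{Z}^V$, normalized so that $\min_c\sigma(c)=0$, by Remark \ref{rem:Delta}), and then firing the bottom level set $S$ against $v$ when $a_m\notin S$, or the top level set $A$ against $u$ when $a_m\in S$. Your two identities are exact, the verification that the fired configuration stays non-negative outside the sink is correct in both cases, and the case split is exhaustive (when $a_m\in S$ the set $A$ is automatically non-empty and avoids the sink, since $\sigma(a_m)=0<M$). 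This contradicts the parking property of $v$, respectively of $u$, and forces $\sigma$ to be constant, hence $u=v$.

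The existence half contains the one genuine gap: termination of the greedy reduction. What you offer in parentheses as a monovariant --- ``the configuration cannot be fired forever while keeping all non-sink values non-negative'' --- is not a monovariant but a verbatim restatement of the claim to be proved, and Dhar's burning algorithm only locates a fireable set $C$ when one exists; it does not by itself show that the iteration halts. The standard way to close this on $K_{m,n}$ is as follows. Firing a non-empty $C\subseteq A_{m-1}\cup B_n$ changes the sink value by $+|C\cap B_n|\geq 0$, so the sink value never decreases along the process, while it stays bounded above by $\degree(u)$ because all non-sink values remain non-negative; hence only finitely many of the fired sets can meet $B_n$. From that point on every fired set lies inside $A_{m-1}$, and each such firing raises $\sum_j u_{b_j}$ by $n|C|\geq n$, while that sum is bounded above by $\degree(u)$ minus the (now constant) sink value; so this phase is finite as well. (On a general graph one replaces this two-step argument by induction on the distance from the sink.) Finally, a small slip in the same half: to lift in-debt vertices you must \emph{unfire} the set of non-sink vertices with negative values (equivalently, fire its complement, which contains the sink); firing that set, as written, only decreases their values further. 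Your primary route, via the non-negativity of the entries of the inverse reduced Laplacian, is sound.
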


Baker and Norine observed that the test that a configuration $u$
is effective may be reduced to the fact that the value of $a_m$ is
non-negative in the configuration $\park(u)$.

For a proof of the following theorem see \cite[Theorem 3.3]{baker} or \cite[Proposition 37]{corileborgne}.
\begin{theorem}[Baker-Norine] \label{thm:effective_park_nonnegative}
A configuration $u$ is effective if and only if $\park(u)$ is non-negative.
\end{theorem}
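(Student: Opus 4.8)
The plan is to prove the two implications separately, reducing the nontrivial one to a statement about the value at the sink. The ``if'' direction is immediate: if $\park(u)$ is non-negative, then $u\topplingequiv\park(u)$ exhibits $u$ as toppling equivalent to a non-negative configuration, so $u$ is effective by definition. For the ``only if'' direction I would argue as follows. If $u$ is effective, choose a non-negative $v$ with $u\topplingequiv v$. Since $\topplingequiv$ is an equivalence relation we have $\park(v)=\park(u)$, and since $\park(u)$ is already non-negative outside the sink (being parking), it remains only to show $\park(u)_{a_m}\ge 0$. As toppling preserves the degree, $v$ and $\park(u)$ share the same degree, so this inequality will follow once I prove the \emph{maximality lemma}: among all configurations toppling equivalent to $u$ and non-negative outside the sink, $\park(u)$ attains the largest value at $a_m$. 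Indeed, applying this to the non-negative configuration $v$ gives $\park(u)_{a_m}\ge v_{a_m}\ge 0$.

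To prove the maximality lemma I would argue by contradiction, exploiting the top level set of the toppling vector. Write $p:=\park(u)$ and suppose some $w\topplingequiv p$ is non-negative outside the sink yet satisfies $w_{a_m}>p_{a_m}$. By definition of toppling equivalence, $w=p-\sum_{c}y_c\topplingconfiguration{c}$ for integers $y_c$, and using Remark \ref{rem:Delta} I may normalize the representation so that $y_{a_m}=0$. Grouping the laplacian contributions by edges gives the identity $w_{v'}=p_{v'}-\sum_{c\neq v'}(y_{v'}-y_c)e_{c,v'}$ for every vertex $v'$; taking $v'=a_m$ turns the hypothesis $w_{a_m}>p_{a_m}$ into $\sum_{c\neq a_m}y_c e_{c,a_m}>0$, so that $k:=\max_c y_c>0$. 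Let $S:=\{c:y_c=k\}$, a non-empty subset of $A_{m-1}\cup B_n$ (it avoids $a_m$ because $y_{a_m}=0<k$).

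The crux is to compare $w$ with $p-\Delta^{(S)}$ on $S$. For $v'\in S$ the differences $y_{v'}-y_c$ vanish when $c\in S$ and are at least $1$ when $c\notin S$ — here integrality of the $y_c$ is essential — so the identity above yields $w_{v'}\le p_{v'}-\sum_{c\notin S}e_{c,v'}$, and the same edge count shows $p_{v'}-\sum_{c\notin S}e_{c,v'}=(p-\Delta^{(S)})_{v'}$. On the other hand, since $p$ is parking, $p-\Delta^{(S)}$ has a negative value outside the sink; and since $(p-\Delta^{(S)})_{v'}=p_{v'}+\sum_{c\in S}e_{c,v'}\ge 0$ for every non-sink $v'\notin S$, that negative value must occur at some $v'\in S$. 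For such a $v'$ we obtain $w_{v'}\le(p-\Delta^{(S)})_{v'}<0$ with $v'\neq a_m$, contradicting that $w$ is non-negative outside the sink. This proves the lemma, and with it the theorem.

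I expect the main obstacle to be the bookkeeping in this level-set step: establishing the identity $w_{v'}=p_{v'}-\sum_{c\neq v'}(y_{v'}-y_c)e_{c,v'}$ together with the matching expression for the cluster toppling $\Delta^{(S)}$, and choosing the \emph{top} level set $S$ so that the parking hypothesis is forced to produce a negative value precisely at a non-sink vertex. The normalization $y_{a_m}=0$ via Remark \ref{rem:Delta} and the use of integrality to pass from $y_c<k$ to $y_{v'}-y_c\ge 1$ are the small but essential points that make the argument go through.
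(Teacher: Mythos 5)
Your proof is correct, but note that the paper does not actually prove this theorem: it is quoted as a result of Baker and Norine, and the reader is referred to \cite[Theorem 3.3]{baker} or \cite[Proposition 37]{corileborgne} for a proof. So there is no internal argument to compare against; what you have written is a self-contained proof, and it is sound. The ``if'' direction is indeed immediate from the definitions. For ``only if'', your reduction to the maximality lemma (the parking representative maximizes the value at $a_m$ among all toppling-equivalent configurations that are non-negative outside the sink) is exactly the structural fact underlying the cited proofs, and your level-set argument establishes it rigorously: the identity $w_{v'}=p_{v'}-\sum_{c\neq v'}(y_{v'}-y_c)e_{c,v'}$ follows from $d_{v'}=\sum_{c\neq v'}e_{c,v'}$; the normalization $y_{a_m}=0$ is legitimate by Remark \ref{rem:Delta}; choosing $S$ as the top level set makes every coefficient $y_{v'}-y_c$ with $v'\in S$, $c\notin S$ at least $1$ (integrality), giving $w_{v'}\leq \bigl(p-\Delta^{(S)}\bigr)_{v'}$ for $v'\in S$; and since $\bigl(p-\Delta^{(S)}\bigr)_{v'}=p_{v'}+\sum_{c\in S}e_{c,v'}\geq 0$ for every non-sink $v'\notin S$, the negative value forced by the parking property of $p$ must occur inside $S$, contradicting that $w$ is non-negative outside the sink. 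This is, in substance, the classical argument for reduced divisors (as in \cite{baker}), and your write-up is complete enough that it could serve as the proof the paper delegates to its references.
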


\section{A greedy algorithm for the rank on $K_{m,n}$: statement}

We first describe the following non-deterministic greedy algorithm
which computes a proof for the rank of any configuration $u$ of
$K_{m,n}$.

All the algorithms in this paper will be written in a pseudocode resembling the programming language PYTHON (compare this algorithm with the one in \cite[Section 2.2]{corileborgne}).
\lstset{language=Python}
\begin{lstlisting}[frame=single,texcl,mathescape]
def compute_rank($u$):
  $u$ = park($u$)
  rank = $-1$
  $f$ = $0$  # $\,\, f$ is the $0$ configuration
  while $u_{a_m}$ >= $0$:
    let $i$ be such that $u_{b_i}$ = $0$
    $u$ = park($u - \diracconfiguration{b_i}$)
    $f$ = $f + \diracconfiguration{b_i}$
    $rank$ = $rank + 1$
  return ($rank$,$f$)
\end{lstlisting}

The first main result of our paper is the following theorem.

\begin{theorem} \label{thm:correctness}
The algorithm returns in $f$ a proof for the rank of the input
$u$, hence $\degree(f)-1=rank=\rank(u)$.
\end{theorem}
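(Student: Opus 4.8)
The plan is to analyze the algorithm through a loop invariant and reduce its correctness to a single quantitative lemma asserting that the greedy step lowers the rank by exactly one. Write $u^{(0)}=\park(u)$ for the value of the variable $u$ after the first line, and let $u^{(k)}$, $f^{(k)}$ denote the values of $u$ and $f$ after $k$ passes of the while loop, with $i_1,i_2,\dots$ the successively chosen indices and $N$ the total number of passes. First I would check that the loop is well defined and terminates. Each $u^{(k)}$ is parking by construction, so I claim it admits a vertex $b_i$ with $u^{(k)}_{b_i}=0$: by Dhar's burning algorithm a configuration is parking (reduced) exactly when the fire started at the sink burns every vertex, and since $K_{m,n}$ is bipartite the only neighbours of $a_m$ are the $b_j$, each joined to $a_m$ by a single edge; hence the fire can spread at all only by igniting some $b_j$ with $u^{(k)}_{b_j}<1$, i.e. $u^{(k)}_{b_j}=0$, so such a $b_j$ exists. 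Moreover toppling and parking preserve the degree, so $\degree(u^{(k)})=\degree(u)-k$, and since non-negativity outside the sink gives $u^{(k)}_{a_m}\le\degree(u^{(k)})$, the loop condition eventually fails; thus the algorithm halts.

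Next I would record the invariant maintained by the body of the loop, namely
\[
f^{(k)}\ge 0,\qquad \degree(f^{(k)})=k,\qquad u-f^{(k)}\topplingequiv u^{(k)}.
\]
The first two are immediate since each pass adds $\diracconfiguration{b_{i_k}}$ to $f$, and the third follows by induction from $u^{(k)}=\park(u^{(k-1)}-\diracconfiguration{b_{i_k}})\topplingequiv u^{(k-1)}-\diracconfiguration{b_{i_k}}$. The returned values are $rank=N-1$ and $f=f^{(N)}$. At termination $u^{(N)}$ is parking with $u^{(N)}_{a_m}<0$, hence not non-negative, so by Theorem \ref{thm:effective_park_nonnegative} it is non-effective; as $u-f^{(N)}\topplingequiv u^{(N)}$ and effectiveness is a toppling invariant, $u-f^{(N)}$ is non-effective with $f^{(N)}\ge 0$. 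By the definition of the rank this exhibits $f^{(N)}$ as an admissible competitor, whence $\rank(u)+1\le\degree(f^{(N)})=N$, that is $rank\ge\rank(u)$.

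The reverse inequality is the heart of the matter, and I would isolate it as the following \emph{key lemma}: if $v$ is parking with $v_{a_m}\ge 0$ and $v_{b_i}=0$, then
\[
\rank(\park(v-\diracconfiguration{b_i}))=\rank(v)-1.
\]
Granting this, each pass strictly lowers the rank of the current parking configuration by one, since $\rank(u^{(k+1)})=\rank(u^{(k)}-\diracconfiguration{b_{i_{k+1}}})=\rank(u^{(k)})-1$ (rank being invariant under parking); as the loop runs precisely while $u^{(k)}_{a_m}\ge 0$, equivalently while $\rank(u^{(k)})\ge 0$, and the rank starts at $\rank(u^{(0)})=\rank(u)$, it first reaches $-1$ after exactly $\rank(u)+1$ passes, so $N=\rank(u)+1$. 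Combined with the previous paragraph this gives $rank=\rank(u)$ and $\degree(f)=\rank(u)+1$, with $u-f$ non-effective, so $f$ is a proof for the rank, as required.

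It remains to prove the key lemma, which is the main obstacle. In the equivalent Baker--Norine formulation, $\rank(D)\ge s$ holds iff $D-E$ is effective for every effective $E$ of degree $s$; under this description the inequality $\rank(\park(v-\diracconfiguration{b_i}))\le\rank(v)-1$ is \emph{equivalent} to the existence of a minimal proof $f^{\ast}$ for $v$ (so $f^{\ast}\ge 0$, $\degree(f^{\ast})=\rank(v)+1$, and $v-f^{\ast}$ non-effective) whose coefficient at the value-$0$ vertex $b_i$ is positive, while the opposite inequality $\rank(\park(v-\diracconfiguration{b_i}))\ge\rank(v)-1$ is the standard fact that removing a single chip lowers the rank by at most one. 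Everything thus comes down to showing that the burning vertex $b_i$ lies in the support of some minimal proof of $v$. This is exactly where the combinatorial description of the stable and parking sorted configurations on $K_{m,n}$ and of the operators $\varphi,\psi,\Ta,\Tb$ recalled from \cite{dukesleborgne,addl} in Sections 3--7 enters: I would use that graphical model to track the effect of subtracting $\diracconfiguration{b_i}$ and re-parking, and thereby construct such a minimal proof explicitly. I expect this construction, rather than the bookkeeping above, to require the bulk of the work.
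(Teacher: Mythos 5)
Your bookkeeping is correct and structurally parallel to the paper's own argument: the loop invariant, the termination argument, and the inequality $rank\ge\rank(u)$ obtained from the final non-effective configuration are all fine, and your reduction of the reverse inequality to the \emph{key lemma} (that for $v$ parking with $v_{a_m}\ge 0$ and $v_{b_i}=0$ one has $\rank(\park(v-\diracconfiguration{b_i}))=\rank(v)-1$) is sound. Moreover, your observation that the nontrivial half of this lemma is equivalent to the existence of a minimal proof $f^{\ast}$ for $v$ with $f^{\ast}_{b_i}>0$ identifies precisely the paper's Lemma~\ref{lem:greedychoice}; the paper's proof of Theorem~\ref{thm:correctness} is essentially your induction with that lemma supplied.

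However, there is a genuine gap: you never prove the key lemma. You only announce that you ``would use the graphical model to construct such a minimal proof explicitly'' and that you expect this to be the bulk of the work. That expectation is right, and it is exactly the part that is missing. In the paper this content occupies two lemmas. First, Lemma~\ref{lem:restrictedsupport} shows that some minimal proof has support contained in $B_n$; its proof is not routine, resting on the periodic-diagram analysis, specifically the observation that computing $\sort(\park(v-\diracconfiguration{a_1}))$ and $\sort(\park(v-\diracconfiguration{b_1}))$ moves the $m\times n$ grid so that its southwest corner lands on the \emph{same} corner of the next connected component of the intersection area, hence removes the same amount from the sink in both cases. Second, Lemma~\ref{lem:greedychoice} upgrades this to a minimal proof positive at the prescribed vertex $b_i$, by passing to a configuration where both $b_i$ and some deficient vertex $b_j$ have value $0$ and invoking the automorphism of $K_{m,n}$ exchanging them. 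Without a proof of the key lemma, your argument establishes only $rank\ge\rank(u)$, not equality; as it stands the proposal is an accurate reduction of the theorem to its hardest ingredient rather than a proof of it.
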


In order to prove this theorem, we will first need to introduce
some important notions and to recall some results from
\cite{addl}.

In a later section we will provide an optimization of this algorithm on improved data-structures to reach the announced linear arithmetic complexity.

\section{Stable and sorted configurations} \label{sec:stable_sorted}

The complete bipartite graph $K_{m,n}$ admits numerous automorphisms, the vertex $a_m$ being fixed or not.

Let $S_k$ denote the set of permutations of $\{1,2,\ldots, k\}$. We define the action of $\sigma=(\sigma^{a},\sigma^{b})\in
S_m\times S_n$ on the configuration $u$ on $K_{m,n}$ by
$$
\sigma \cdot u := \configuration{u_{a_{\sigma^a(1)}},
u_{a_{\sigma^a(2)}},\ldots
u_{a_{\sigma^a(m-1)}};u_{a_{\sigma^a(m)}}}{u_{b_{\sigma^b(1)}},
u_{b_{\sigma^b(2)}},\ldots u_{b_{\sigma^b(n)}}}.
$$

We sometimes restrict this action to the elements
$\sigma=(\sigma^a,\sigma^b)$ such that $\sigma^a(m)=m$, so that the
value on the distinguished sink $a_m$ is preserved. Such
elements are also denoted as elements of $S_{m-1}\times S_n$
instead of $S_m\times S_n$. Clearly they all act as automorphisms of $K_{m,n}$.

Most of our definitions fit well with these symmetries. For
example, two configurations $u$ and $v$ are called \emph{toppling
and ($S$-)permuting equivalent}, where $S=S_m\times S_n$ or
$S=S_{m-1}\times S_n$, if there exists an element $\sigma
\in S$ such that $\sigma\cdot u$ and $v$ are toppling equivalent
($\sigma\cdot u \topplingequiv v$). This binary relation is denoted by $u
\topplingandpermutingequiv{S} v$.

All the properties claimed in the following lemma are easy to check directly from the definitions, so the proofs are left to the reader.
\begin{lemma} \label{lem:symmetry}
Let $u$ and $v$  be configurations of $K_{m,n}$, $\sigma$ a
permutation of $S_m\times S_n$ and $\tau$ a permutation of
$S_{m-1}\times S_n$.
\begin{enumerate}
\item The binary relations $u \topplingandpermutingequiv{S_m\times
S_n} v$ and $u \topplingandpermutingequiv{S_{m-1}\times S_n} v$
are equivalence relations;

\item $\sigma\cdot u$ is non-negative if and only if $u$ is
non-negative;

\item $\sigma\cdot u$ is effective if and only if $u$ is
effective;

\item $\tau\cdot u$ is non-negative outside the sink if and only
if $u$ is non-negative outside the sink;

\item $\park(\tau\cdot u)=\tau\cdot
\park(u)$;

\item $f$ is a proof for the rank of $u$ if and only if
$\sigma\cdot f$ is a proof for the rank of $\sigma\cdot u$;

\item $\degree(\sigma\cdot u) = \degree(u)$;

\item $\rank(\sigma\cdot u) = \rank(u)$.
\end{enumerate}
\end{lemma}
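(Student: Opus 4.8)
The plan is to verify each of the eight claims directly from the definitions, since Lemma~\ref{lem:symmetry} is precisely the assertion that all the basic notions of the sandpile model interact nicely with the symmetries of $K_{m,n}$. The unifying principle I would use is that an element $\sigma=(\sigma^a,\sigma^b)\in S_m\times S_n$ acts by relabeling vertices along an automorphism of the graph, and therefore commutes with the Laplacian structure: if $\pi$ denotes the induced permutation of $V$, then $\sigma\cdot u = u\circ\pi^{-1}$, $\degree(\sigma\cdot u)=\degree(u)$ is immediate since summing values is permutation-invariant (this settles (7)), and crucially $\topplingconfiguration{\pi(c_k)} = \sigma\cdot\topplingconfiguration{c_k}$ because $\sigma$ preserves adjacency and vertex-degrees. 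From this last identity, $\sigma\cdot(u-\topplingconfiguration{c_k}) = (\sigma\cdot u)-\topplingconfiguration{\pi(c_k)}$, so $\sigma$ carries any toppling sequence for $u$ to a toppling sequence for $\sigma\cdot u$ of the same length; hence $u\topplingequiv v$ iff $\sigma\cdot u\topplingequiv\sigma\cdot v$. This equivariance of $\topplingequiv$ is the technical heart from which most of the remaining parts follow.

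With that in hand I would dispatch the parts in order. For (1), reflexivity, symmetry, and transitivity of $\topplingandpermutingequiv{S}$ follow from the group structure of $S$ combined with the fact that $\topplingequiv$ is itself an equivalence relation (the already-stated Proposition of Dhar): reflexivity uses the identity permutation, symmetry uses $\sigma^{-1}$ together with the equivariance just established, and transitivity composes two permutations. Part (2) is trivial since $\sigma\cdot u$ merely permutes the entries of $u$, so all entries are non-negative for one iff for the other. Part (3) combines (2) with the equivariance of $\topplingequiv$: $u$ is effective iff some $v\topplingequiv u$ is non-negative, and applying $\sigma$ to $v$ gives a non-negative configuration toppling-equivalent to $\sigma\cdot u$. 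Part (4) is the sink-preserving analogue of (2), using that $\tau^a(m)=m$ so the sink value is unchanged and only the non-sink entries are permuted among themselves.

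Parts (5) and (6) require a little more care but no genuine obstacle. For (5), I would argue that $\tau\cdot\park(u)$ is non-negative outside the sink by (4), is toppling-equivalent to $\tau\cdot u$ by equivariance, and remains parking: the parking condition requires that for every non-empty $C\subseteq A_{m-1}\cup B_n$ the configuration $u-\topplingconfiguration{C}$ has a negative value outside the sink, and since $\tau$ permutes $A_{m-1}\cup B_n$ among themselves (again using $\tau^a(m)=m$), the set $C$ is carried to $\pi(C)$ with $\tau\cdot\topplingconfiguration{C}=\topplingconfiguration{\pi(C)}$, so the parking condition transports bijectively over all such subsets. By the uniqueness of the parking representative (Dhar's proposition), $\tau\cdot\park(u)$ must equal $\park(\tau\cdot u)$. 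For (6), unwinding the definition of a proof for the rank, $f$ non-negative with $u-f$ non-effective and $\degree(f)=\rank(u)+1$ transports under $\sigma$ using (2), (3), (7), and the observation $\sigma\cdot(u-f)=\sigma\cdot u-\sigma\cdot f$; the only subtlety is that (6) allows the full group $S_m\times S_n$, so I must not invoke (5) here but rather work directly from effectivity, which is $S_m\times S_n$-equivariant by (3). Finally (8) follows from (6): since $\sigma$ gives a bijection between proofs for the rank of $u$ and proofs for the rank of $\sigma\cdot u$ preserving degree by (7), the minimal degree of such a proof is the same, so $\rank(\sigma\cdot u)=\rank(u)$. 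The main thing to be careful about throughout is keeping straight which statements hold for the full $S_m\times S_n$ (those not involving $\park$ or the sink, namely (1)--(3),(6)--(8)) versus only for the sink-fixing subgroup $S_{m-1}\times S_n$ (parts (4) and (5)), since $\park$ is defined relative to the distinguished sink $a_m$.
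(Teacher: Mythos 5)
The paper never writes out a proof of this lemma --- it states that all eight parts ``are easy to check directly from the definitions'' and leaves them to the reader --- so your direct verification is exactly the intended route, and its substance is sound. The technical core you identify is the right one: since $\sigma$ acts by relabeling vertices along a graph automorphism, which preserves degrees and adjacency, the toppling operator at a vertex is carried to the toppling operator at the image vertex, hence $\sigma\cdot(u-\topplingconfiguration{c_k})=(\sigma\cdot u)-\topplingconfiguration{c'}$ for the image vertex $c'$, and toppling equivalence is $\sigma$-equivariant. Your group-theoretic treatment of (1), the transport arguments for (2)--(4), the use of the uniqueness of the parking representative in (5) (after checking that the parking condition transports because $\tau$ permutes the non-empty subsets of $A_{m-1}\cup B_n$ among themselves), and your care in distinguishing which parts hold for $S_m\times S_n$ versus only for the sink-fixing subgroup $S_{m-1}\times S_n$ are all correct.

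The one flaw is the logical ordering of (6) and (8): as written, they are circular. To conclude that $\sigma\cdot f$ is a proof for the rank of $\sigma\cdot u$, you must verify $\degree(\sigma\cdot f)=\rank(\sigma\cdot u)+1$; from (7) you only get $\degree(\sigma\cdot f)=\degree(f)=\rank(u)+1$, so (6) already presupposes $\rank(\sigma\cdot u)=\rank(u)$, which is (8) --- yet you then derive (8) from (6). The repair is immediate with the ingredients you have assembled: by (2), (3), (7) and linearity of the action, the map $f\mapsto \sigma\cdot f$ is a degree-preserving bijection from the full feasible set $\{f\mid f \text{ non-negative},\ u-f \text{ non-effective}\}$ onto the corresponding set for $\sigma\cdot u$ (not merely between the minimizers). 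Equality of the two minima of $\degree$ over these sets is precisely (8), and then the bijection carries minimizers to minimizers, which is (6). So prove (8) first from the feasible-set bijection, and (6) falls out as a corollary.
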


On $K_{m,n}$, the degree of any vertex $a_i$ is $n$ and the degree of
any vertex $b_j$ is $m$, hence a \emph{stable} configuration $u$ on
$K_{m,n}$ (with respect to the sink $a_m$) is a configuration such
that $0\leq u_{a_i}<n$ for all $i=1,2,\dots,m$ and $0\leq u_{b_j}<m$
for all $j=1,2,\dots,n$. Notice that we have no conditions on the
value $u_{a_m}$ of $u$ on the sink $a_m$.  We will sometimes identify
configurations that differ only by their value on the sink. In the
sequel, in these situations, the value $u_{a_m}$ of $u$ on
the sink will be denoted by the symbol $*$.

A \emph{sorted} configuration $u$ on $K_{m,n}$ (with respect to
the sink $a_m$) is a configuration such that $u_{a_i}\leq
u_{a_{i+1}}$ for all $i=1,2,\dots,m-2$ and $u_{b_j}\leq
u_{b_{j+1}}$ for all $j=1,2,\dots,n-1$. Notice again that there
are no conditions on $u_{a_m}$. Clearly, given a configuration $u$
on $K_{m,n}$, there exists a unique sorted configuration
$\sort(u)$ for which there exists a $\tau \in S_{m-1}\times S_n$
such that $\sort(u)=\tau \cdot u$.
\begin{example}
Let $m=7$, $n=5$, and consider the configuration on $K_{7,5}$
$$
u := \configuration{2,0,2,2,0,0;*}{4,4,0,0,4},
$$
where remember that $*$ denotes the value at the sink. This is a
stable configuration. Also, the corresponding sorted configuration
is
$$
\sort(u)=\tau\cdot u=\configuration{0,0,0,2,2,2;*}{0,0,4,4,4}
$$
where $\tau=(\tau^a,\tau^b)\in S_6\times S_5$, $\tau^a=(1,\,
5)(3,\, 6)\in S_6$ and $\tau^b=(1,\, 3)(2,\, 4)\in S_5$ written as products of transpositions.
\end{example}

Since we are interested in computing the rank, because of Lemma
\ref{lem:symmetry}, we can restrict ourselves to work with sorted
configurations.
More precisely, the rank of any configuration $u$ is also the rank of $\sort(\park(u))$.
Indeed, the first step of our algorithm will be to efficiently compute this latter configuration.
Then the second step of our algorithm will compute the rank of this parking sorted configuration, which is a particular case of stable sorted configuration.
In order to work with stable sorted configurations, it will be very
convenient to introduce a pictorial interpretation of our objects.

\section{A pictorial interpretation}

Given a stable sorted configuration $u :=
\configuration{u_{a_1},u_{a_2},\ldots u_{a_{m-2}},u_{a_{m-1}};
u_{a_m}}{u_{b_1},u_{b_2},\ldots u_{b_n}}$, we draw its \emph{diagram}, i.e. two lattice paths in a $m\times
n$ square grid, from the southwest corner to the
northeast corner, moving only north or east steps, which will
encode $u$: a green path whose north step in the $i$-th row is
$u_{b_i}+1$ squares distant from the west edge of the grid, and a
red path whose east step in the $j$-th column is $u_{a_j}+1$
squares distant from the south edge of the grid, except for the
$m$-th column (the one corresponding to the sink), which will
always be at distance $n$, i.e. on the upper edge of the $m\times
n$ square grid.

\begin{example}
Let $m=7$, $n=5$, and consider the stable sorted configuration on
$K_{7,5}$
$$
u :=\configuration{0,0,0,2,2,2;*}{0,0,4,4,4}.
$$
Its diagram is drawn in Figure \ref{fig:configKmn}.

\begin{figure}[h]
\includegraphics[width=60mm,clip=true,trim=15mm 215mm 90mm 25mm]{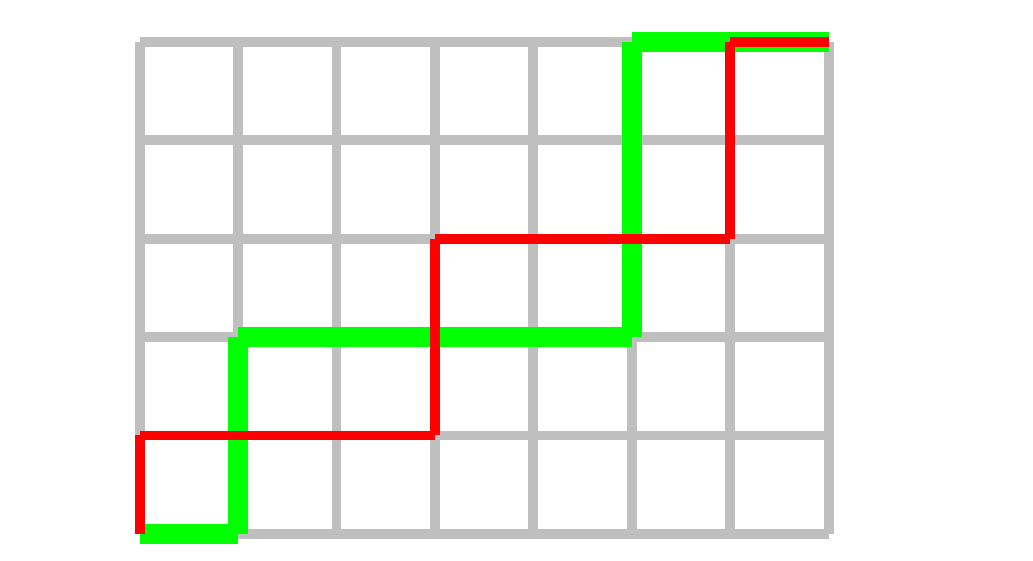}
\caption[ciccia]{The stable sorted configuration
$u=\configuration{0,0,0,2,2,2;*}{0,0,4,4,4}$.}
\label{fig:configKmn}
\end{figure}
\end{example}

Notice that in this representation the red path will always start
at the southwest corner with a north step, and it will always end
with a east step (because of our convention). Similarly, the green
path will always start at the southwest corner with an east step.

\section{The operators $\varphi$ and $\psi$ on $K_{m,n}$}

In \cite{addl} two operators on stable sorted configurations have
been studied. We recall here some of those results, but possibly
with a slightly different (and lighter) notation. We refer to
\cite{addl} for details and proofs.\newline

We define an operator $\psi$ on stable sorted configurations in
the following way: fix a total order on the vertices $A_{m-1}\cup
B_n$ of $K_{m,n}$ different from the sink $a_m$, say
$a_1<a_2<\cdots<a_{m-1}<b_1<b_2<\cdots <b_n$.

Given a stable configuration $u$ on $K_{m,n}$, if for every nonempty
$C\subseteq A_{m-1}\cup B_n$ the configuration $u+\Delta^{(C)}$ is
not stable, then define $\psi(u):=u$.

Otherwise, consider the following order on the subsets of
$A_{m-1}\cup B_n$: if $C$ and $D$ are two distinct subsets of
$A_{m-1}\cup B_n$, we say that $C$ is smaller than $D$ if
$|C|<|D|$, or if $|C|=|D|$ and $C$ is smaller then $D$ in the
lexicographic order induced by the fixed order on the vertices;
i.e. if the smallest element of $C\setminus D$ is smaller than any
element of $D\setminus C$.

Now let $C\subseteq A_{m-1}\cup B_n$ be minimal (in this order)
such that $u+\Delta^{(C)}$ is still stable, and define
$\psi(u):=\sort(u+\Delta^{(C)})$.

The fixed points of this operator are called \emph{recurrent}
sorted configurations (cf. \cite[Theorem 2.4]{addl}).

We define another operator, $\varphi$, which is a sort of a
``dual'' operator to $\psi$. We use the same total order on the
vertices $A_{m-1}\cup B_n$ that we used for $\psi$. Then, given a
stable sorted configuration $u$, if for every nonempty $C\subseteq
A_{m-1}\cup B_n$ the configuration $u-\Delta^{(C)}$ is not stable,
then define $\varphi(u):=u$. Otherwise, let $C\subseteq
A_{m-1}\cup B_n$ be minimal (in the order that we defined above)
such that $u-\Delta^{(C)}$ is still stable, and define
$\psi(u):=\sort(u-\Delta^{(C)})$.

It can be shown that the fixed points of this operator are
exactly the parking  sorted configurations (cf. \cite[Theorem 2.6]{addl}).

It is known that given a configuration $u$ on $K_{m,n}$ there is
exactly one recurrent sorted configuration $v$ and exactly one
parking sorted configuration $w$ equivalent to $u$, i.e. such that
$u \topplingandpermutingequiv{S} v$ and $u
\topplingandpermutingequiv{S} w$ for $S=S_{m-1}\times S_n$ (cf. \cite[Theorem 28 and Corollary 33]{corileborgne}).

It is also known that if we start with a stable sorted
configuration $u$ and if we apply iteratively the operator $\psi$,
then in finitely many steps we will get such a recurrent sorted
configuration $v$, while if we apply iteratively the operator
$\varphi$, then in finitely many steps we will get such a parking
sorted configuration $v$ (cf. \cite[Remark 5.16 and Proposition 5.18]{addl}).

\section{A graphic interpretation of $\varphi$ and $\psi$}

Again, it is convenient to have a pictorial interpretation of
these operators in terms of our diagrams. We refer to \cite[Section 5]{addl}
for details and proofs.\newline

We start by describing the operator $\varphi$. Consider the diagram of a stable sorted configuration $u$ on $K_{m,n}$. We consider the
area below the red path, and the area to the left of the green
path. Their intersection (see Figure \ref{fig:configKmn_2}) will
play an important role. We will call it the \textit{intersection
area}.

\begin{figure}[h]
\includegraphics[width=60mm,clip=true,trim=15mm 200mm 90mm 10mm]{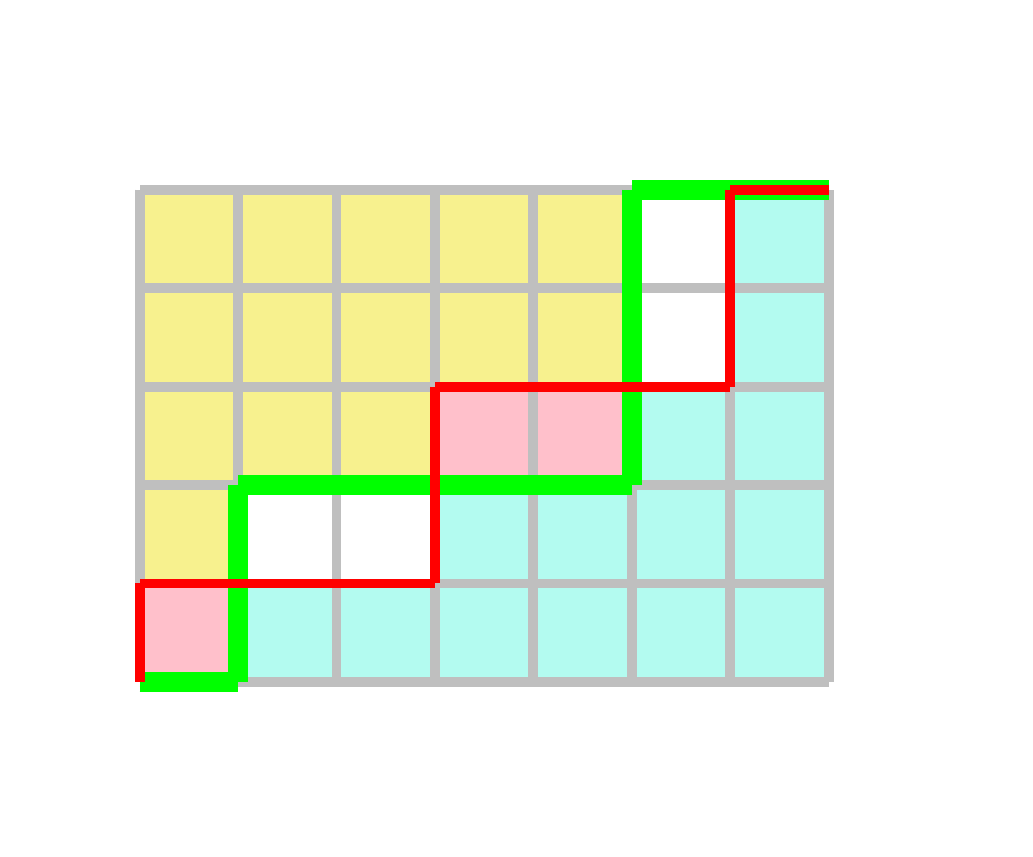}
\caption[ciccia]{The stable configuration
$u=\configuration{0,0,0,2,2,2;*}{0,0,4,4,4}$. The yellow together
with the pink area is the area to the left of the green path; the
light blue together with the pink are is the area below the red
path; the pink area is the intersection area.}
\label{fig:configKmn_2}
\end{figure}

We denote the squares of the grid using integral coordinates in
the obvious way, so that the squares of the grid will have
coordinates $(x,y)$, with $1\leq x\leq m$ and $1\leq y\leq n$.

For example the square in the southwest corner will be the $(1,1)$
square, while the square in the northeast corner will be the
$(m,n)$ square.

Now a square $(i,j)$ will be in the intersection area if and only
if $u_{a_i}\geq j-1$ and $u_{b_j}\geq i-1$.

Notice that this is always the case for the square $(1,1)$.

In order to study the operator $\varphi$, consider the (open) connected components
of the intersection area (here two squares with only one vertex in
common are not considered connected). We can order them by moving
along the red (or equivalently the green) path starting from the
southwest corner. So the first connected component will always be
the one that contains the square $(1,1)$.

If there are no connected components that contain two squares in
the same row, than the configuration $u$ is parking, hence
$\varphi(u)=u$.

If there is at least one connected component that contains two
squares in the same row, than consider the highest occurrence of two adjacent squares in the same row of the intersection area with a red north step crossing their leftmost vertical edge and a green east step crossing their rightmost lower horizontal unit edge, and let $(i,j)$ and $(i+1,j)$  be the coordinates of such squares. We have
that
\begin{equation} \label{eq:varphi_toppling}
\varphi(u)=\sort\left(u-\sum_{k= i}^{m-1}\Delta^{(a_k)}-\sum_{h=
j}^n\Delta^{(b_h)}\right).
\end{equation}

Let us see how this translates into pictures.\newline

Given the diagram of a stable configuration $u :=
\configuration{u_{a_1},u_{a_2},\ldots u_{a_{m-2}},u_{a_{m-1}};
u_{a_m}}{u_{b_1},u_{b_2},\ldots u_{b_n}}$, we draw another
$m\times n$ grid in such a way that the intersection of the new
grid with the old one consists exactly of the upper edge of the
square $(m,n)$ of the new one and the lower edge of the square
$(1,1)$ of the old one. Then, in the new grid we reproduce
precisely the red path as it appears in the old one, except for
the last step, that we omit (so we get a long continuous red
path); while we reproduce in the new grid the green path as it
appears in the old grid, but shifted by one to the left. In this way we have a long red path and a long green
path going through the two grids.

All this is better understood in an example: see Figure
\ref{fig:configKmn_9}.

\begin{figure}[h]
\includegraphics[width=70mm,clip=true,trim=15mm 160mm 50mm 10mm]{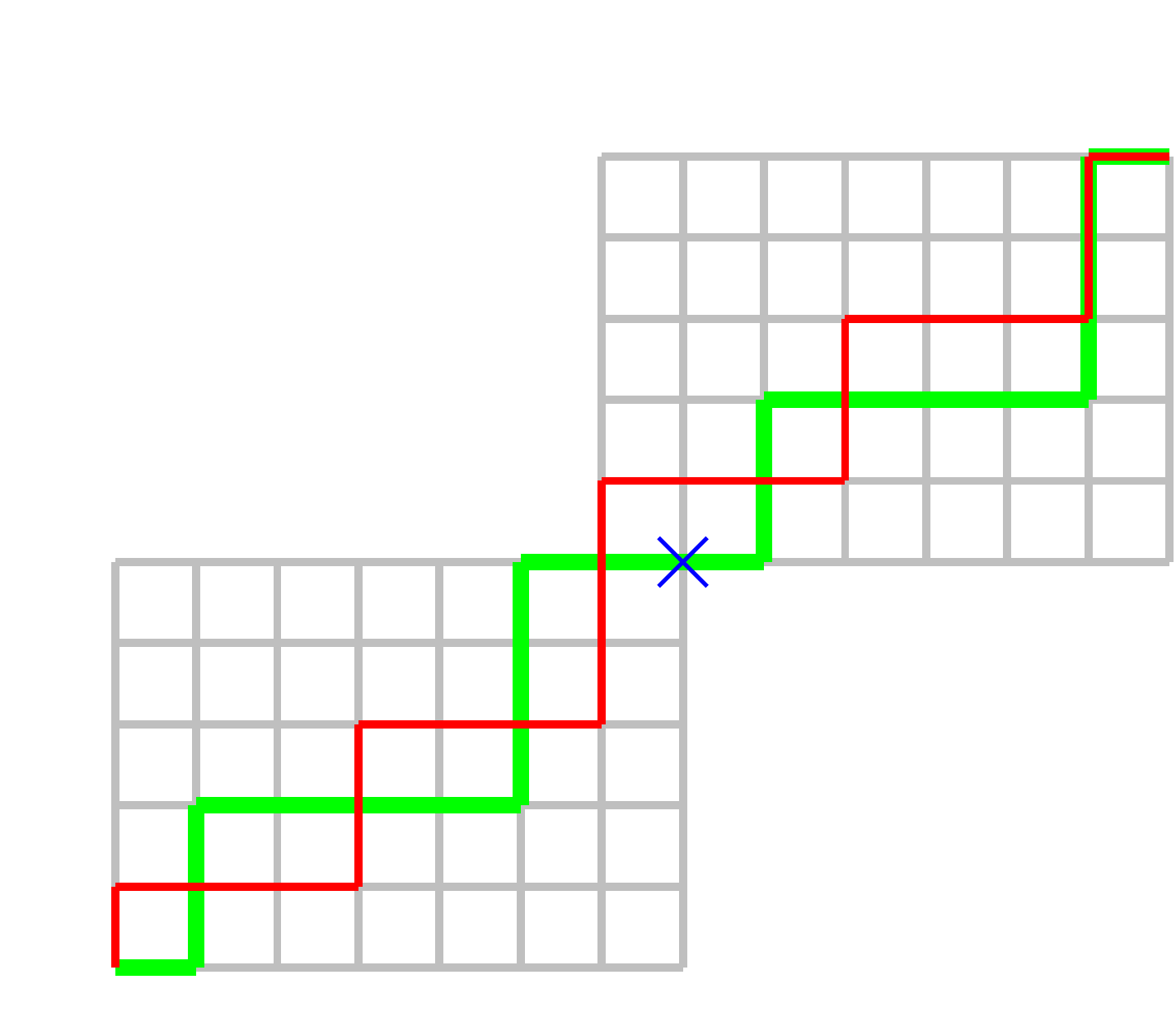}
\caption[ciccia]{The stable configuration
$u=\configuration{0,0,0,2,2,2;*}{1,1,5,5,5}$, with the new grid
and the new paths. The blue cross indicates the northeast corner
of the new grid.} \label{fig:configKmn_9}
\end{figure}

Once we have drawn the new grid with the new paths, we look for
the rightmost occurrence of two adjacent squares in the
intersection area on the same row, say $(i,j)$ and $(i+1,j)$, such
that the squares $(i-1,j)$ and $(i+1,j-1)$ are not in the
intersection area. In other words the left edge of the $(i,j)$
square is a vertical step of the red path, and the lower edge of
the square $(i+1,j)$ is a horizontal step of the green path. We
settle a new $m\times n$ grid with the northeast corner at the
lower-right corner of the square $(i,j)$.

If no such squares exist, then we put the new grid simply where
the old one is.

We claim that the diagram that we get inside this last grid
(completing the paths along the left and upper edges if necessary)
will be the diagram of $\varphi(u)$ (cf. \cite[Theorem 5.11]{addl}). See Figure
\ref{fig:configKmn_10}.

\begin{figure}[h]
\includegraphics[width=70mm,clip=true,trim=15mm 160mm 50mm 10mm]{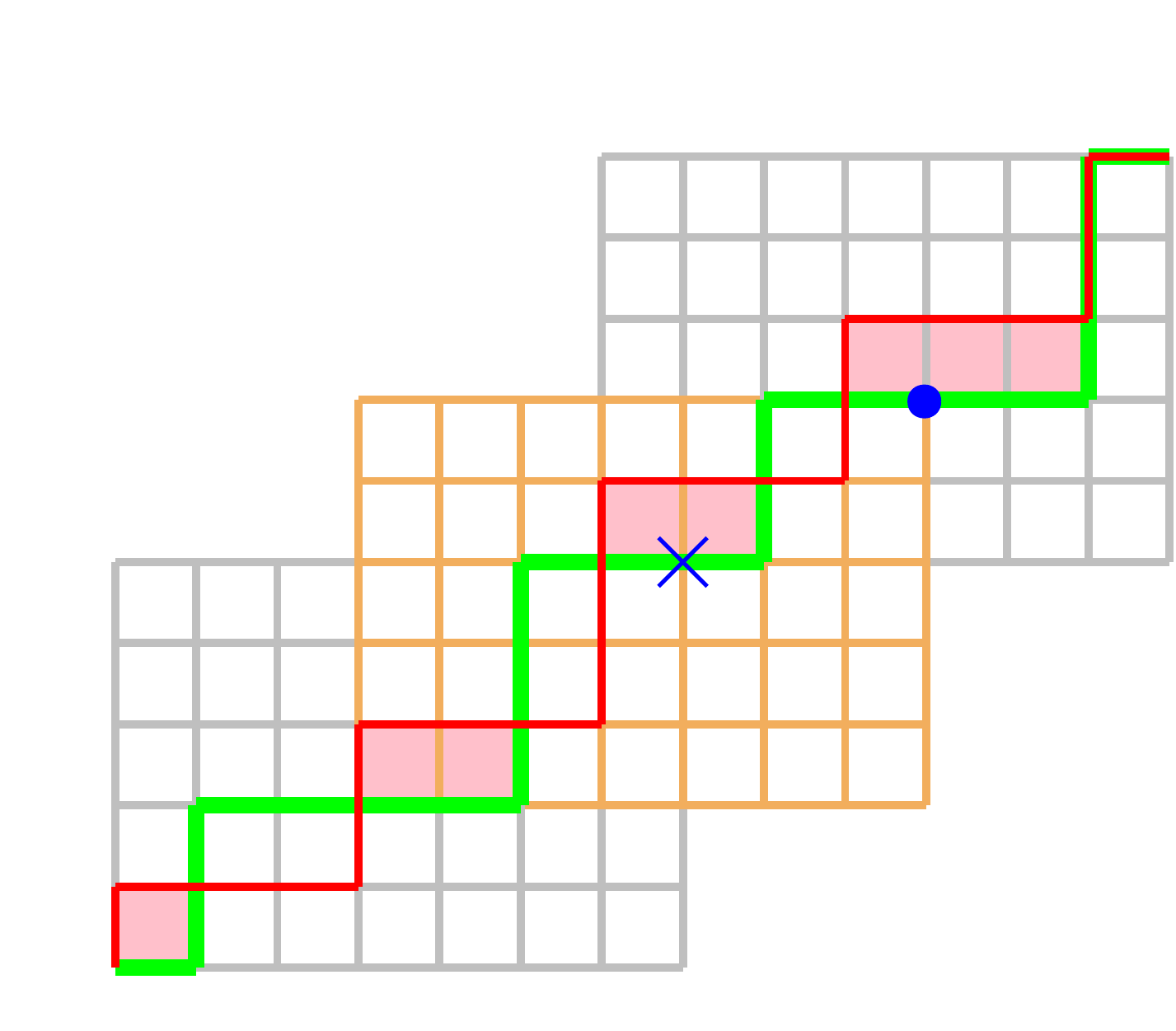}
\caption[ciccia]{The orange grid is the last grid in the
construction, while the blue spot indicates its northeast corner.
The pink area is the intersection area of the long paths.}
\label{fig:configKmn_10}
\end{figure}

The fact that this is indeed the diagram of what we stated in the
previous section to be $\varphi(u)$ is now quite clear from the
picture. This is understood easily by looking at an example: we
can see in Figure \ref{fig:configKmn_10} that
$\varphi(u)=\configuration{0,0,0,3,3,3;*}{1,1,1,4,4}$.

The following remark is crucial.
\begin{remark} \label{rem:periodic}
If iterating the operator $\varphi$ we keep track of the diagrams
that we draw, what we are really doing is to draw periodically the
green path, and to draw periodically the red path with its last
(east) step removed. Then acting with $\varphi$ corresponds simply
to move down (weakly southwest) our $m\times n$ grid stopping by
all the stable sorted configurations that we see along this
periodic picture (cf. \cite[Theorem 5.11]{addl}). The last of such configurations will be
precisely our parking sorted configuration. See Figure
\ref{fig:Yvan1} for an example.
\end{remark}
To make this remark more clear, we complete our graphic
computation by iterating this construction in our running example,
until we get a parking configuration.

So we read in the diagram of the orange grid of Figure
\ref{fig:configKmn_10} that
$\varphi(u)=\configuration{0,0,0,3,3,3;*}{1,1,1,4,4}$.

Iterating we get Figure \ref{fig:configKmn_11}, hence
$\varphi(\varphi(u))=\configuration{0,0,0,2,2,2;*}{0,0,4,4,4}$.

\begin{figure}[h]
\includegraphics[width=70mm,clip=true,trim=15mm 160mm 50mm 10mm]{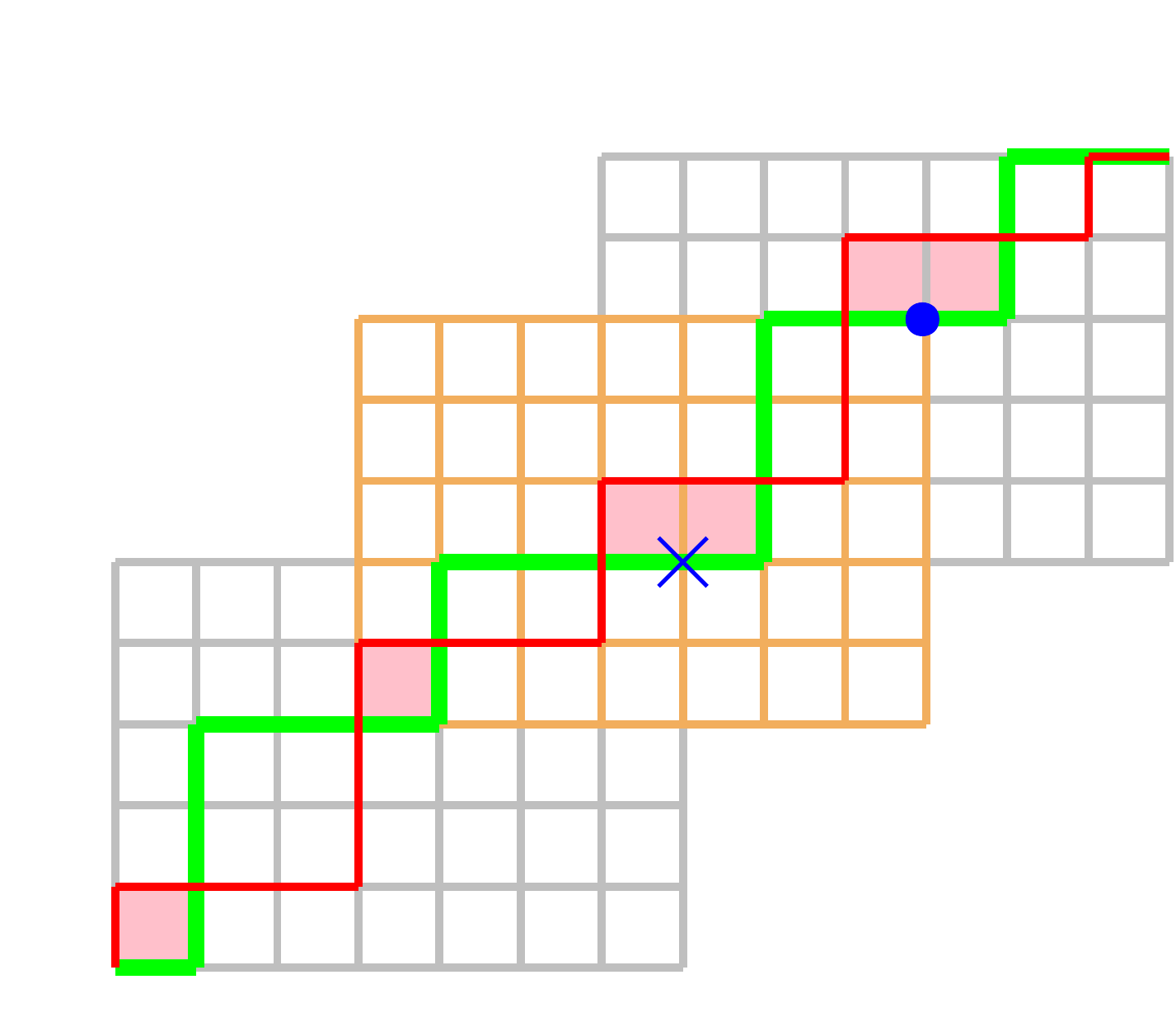}
\caption[ciccia]{This is the diagram of the construction of
$\varphi(\varphi(u))$.} \label{fig:configKmn_11}
\end{figure}

Iterating again we get Figure \ref{fig:configKmn_12}, hence
$\varphi(\varphi(\varphi(u)))=\configuration{0,0,0,3,3,3;*}{0,0,0,3,3}$,
which is parking on $\mathcal{K}_{7,5}$. Indeed
$\varphi(\varphi(\varphi(u)))=\sort (\park(u))$.

\begin{figure}[h]
\includegraphics[width=70mm,clip=true,trim=15mm 160mm 50mm 10mm]{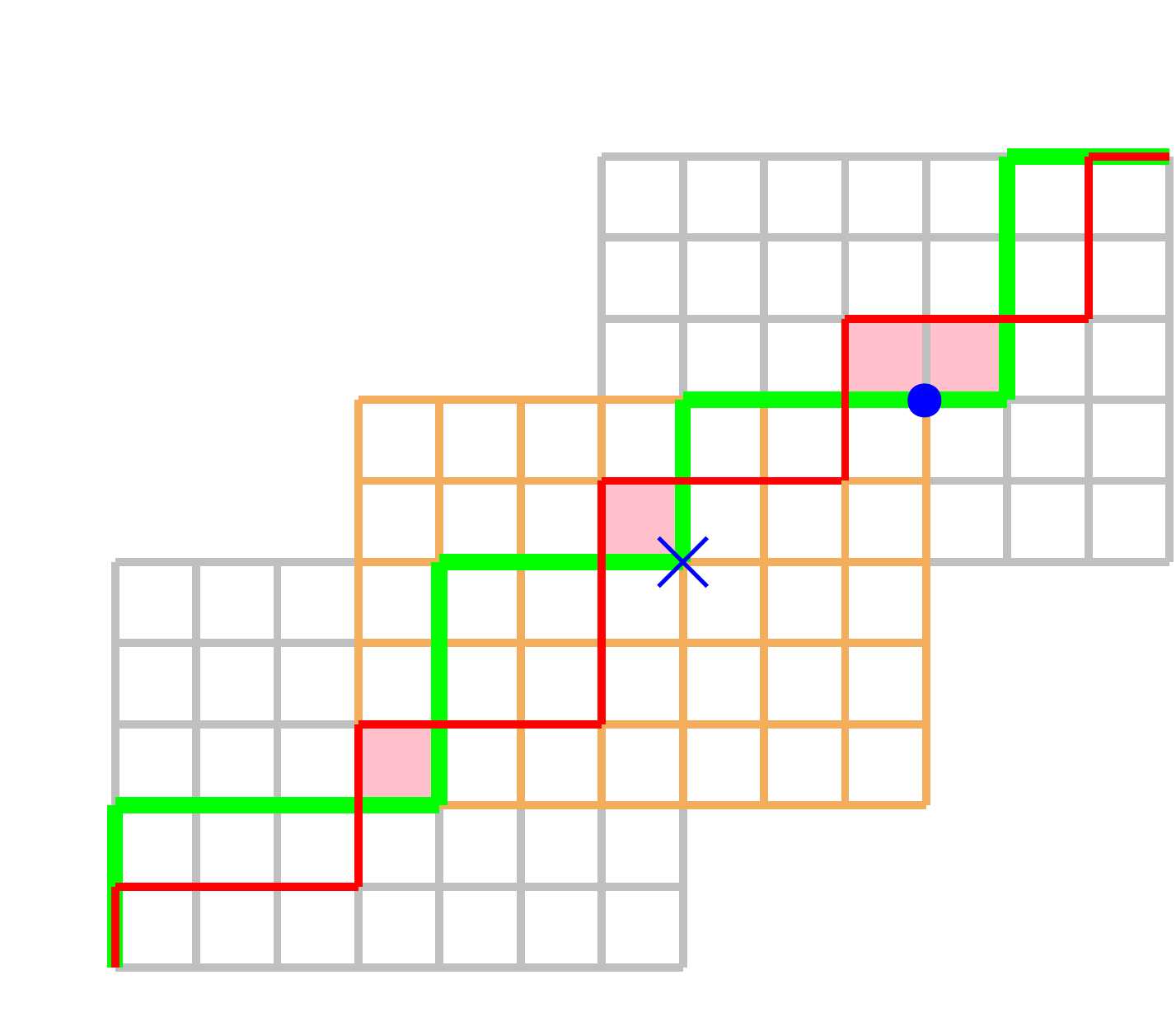}
\caption[ciccia]{This is the diagram of the construction of
$\varphi(\varphi(\varphi(u)))=\configuration{0,0,0,3,3,3;*}{0,0,0,3,3}$.}
\label{fig:configKmn_12}
\end{figure}
\begin{remark}
Notice that we could follow all the steps of this computation on
the periodic diagram mentioned in the Remark \ref{rem:periodic},
just starting with $u$ and sliding down (weakly southwest) our
$m\times n$ grid, reading out all the stable configurations: see
Figure \ref{fig:Yvan1}.

\begin{figure}[h]
\includegraphics[width=130mm,clip=true,trim=25mm 225mm 125mm 25mm]{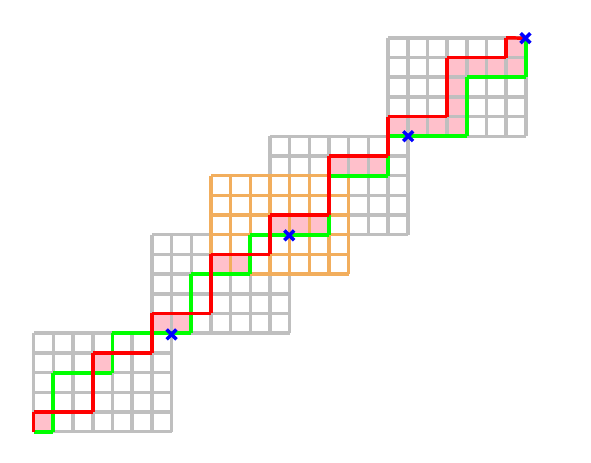}
\caption[ciccia]{This is the periodic diagram of the stable sorted
configuration  $u=\configuration{0,0,0,2,2,2;*}{1,1,5,5,5}$ (which you can see in the orange grid).}
\label{fig:Yvan1}
\end{figure}
\end{remark}

Observe that our pictorial description gives the following
characterization of the parking sorted configurations on
$\mathcal{K}_{m,n}$ (cf. \cite[Corollary 5.15]{addl}).
\begin{proposition} \label{prop:park_sort}
The parking sorted configurations on $\mathcal{K}_{m,n}$ are
precisely the stable sorted configurations on $\mathcal{K}_{m,n}$ whose intersection area does not contain two squares in the same row.
\end{proposition}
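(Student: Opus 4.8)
The plan is to deduce the statement from the two facts recalled just above: that the parking sorted configurations are exactly the fixed points of $\varphi$ (cf. \cite[Theorem 2.6]{addl}), together with the explicit pictorial rule for computing $\varphi$ on the diagram of a stable sorted configuration. It therefore suffices to prove that, for a stable sorted configuration $u$, one has $\varphi(u)=u$ if and only if the intersection area of $u$ contains no two squares in the same row.

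First I would reconcile the two slightly different phrasings of the row condition. Fix a row $j$: a square $(i,j)$ lies in the intersection area precisely when $u_{a_i}\geq j-1$ and $u_{b_j}\geq i-1$. Since $u$ is sorted, $u_{a_1}\leq \cdots \leq u_{a_{m-1}}$, so the first inequality cuts out an up-set $i\geq i_0(j)$, while the second is the upper bound $i\leq u_{b_j}+1$; hence the squares of the intersection area in row $j$ form a (possibly empty) integer interval. In particular any two squares of the intersection area lying in the same row are automatically contiguous, so they lie in the same connected component and contain an adjacent pair. Consequently ``the intersection area contains two squares in the same row'' is equivalent to ``some connected component of the intersection area contains two adjacent squares in the same row'', which is exactly the hypothesis appearing in the pictorial description of $\varphi$. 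This contiguity-in-a-row observation, where sortedness is essential, is the first delicate point.

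For the equivalence itself, if no row of the intersection area contains two squares, then by the very definition of $\varphi$ in this case we have $\varphi(u)=u$. Conversely, suppose some row contains two adjacent squares $(i,j)$ and $(i+1,j)$ of the intersection area; by the pictorial rule, $\varphi(u)=\sort\left(u-\sum_{k=i}^{m-1}\Delta^{(a_k)}-\sum_{h=j}^{n}\Delta^{(b_h)}\right)$, where $C:=\{a_i,\dots,a_{m-1}\}\cup\{b_j,\dots,b_n\}$ is a nonempty subset of $A_{m-1}\cup B_n$ containing at least the vertex $b_n$. I would then track the value on the sink: since $K_{m,n}$ is bipartite, toppling any $b_h$ sends one chip to $a_m$, while toppling the $a_k$'s sends nothing to $a_m$ and $a_m\notin C$; hence $u_{a_m}$ strictly increases, by $n-j+1\geq 1$. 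Since $\sort$ only permutes values within $A_{m-1}$ and within $B_n$ and fixes the sink, the value of $\varphi(u)$ on $a_m$ is strictly larger than that of $u$, so $\varphi(u)\neq u$.

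Combining the two directions yields $\varphi(u)=u$ if and only if the intersection area has no two squares in the same row, and invoking the characterization of parking sorted configurations as the fixed points of $\varphi$ completes the argument. The main (though mild) obstacle I anticipate is precisely the verification that the displayed $\varphi$-step genuinely moves $u$ rather than returning it after sorting; tracking the strictly increasing sink value is the cleanest way to settle this, and it has the pleasant side effect of avoiding any analysis of how $\sort$ rearranges the non-sink coordinates.
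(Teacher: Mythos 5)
Your proof is correct and follows essentially the same route as the paper: the paper presents this proposition as a direct observation from the pictorial description of $\varphi$ combined with the fixed-point characterization of parking sorted configurations, citing \cite[Corollary 5.15]{addl}, which is exactly the skeleton you use. Your two added verifications (that the squares of the intersection area in a fixed row form an interval, so ``two squares in a row'' matches the adjacency hypothesis in the pictorial rule, and that a genuine $\varphi$-step strictly increases the sink value and hence moves $u$) are precisely the details the paper leaves to that citation.
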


It can be shown that the action of the operator $\psi$ is
essentially the inverse of the action of $\varphi$: in the
periodic diagram mentioned in Remark \ref{rem:periodic}, the
action of $\psi$ is precisely to move up (weakly northeast) our
$m\times n$ grid stopping by all the stable sorted configurations
that we see along this periodic picture (cf. \cite[Theorem 5.13 and Remark 5.16]{addl}).

The last of such configurations will be the corresponding
recurrent sorted configuration. See Figure \ref{fig:Yvan1} for an
example.

This gives the following characterization of recurrent sorted
configurations on $K_{m,n}$ (cf. \cite[Theorem 3.7]{dukesleborgne}).
\begin{proposition} \label{prop:sorted_recurrent}
The recurrent sorted configurations on $\mathcal{K}_{m,n}$ are
precisely the stable sorted configurations on $\mathcal{K}_{m,n}$ whose intersection area is connected and it touches both the southwest and the northeast corners of the
$m\times n$ grid.
\end{proposition}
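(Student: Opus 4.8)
The plan is to characterize the fixed points of $\psi$ directly, in exact parallel with the treatment of $\varphi$ that yields Proposition \ref{prop:park_sort}, exploiting the graphical description of $\psi$ as the operation that slides the $m\times n$ window upward (weakly northeast) in the periodic diagram of Remark \ref{rem:periodic}. Concretely, a stable sorted configuration $u$ is recurrent exactly when $\psi(u)=u$, that is, when there is no nonempty $C\subseteq A_{m-1}\cup B_n$ with $u+\Delta^{(C)}$ stable; so the whole task is to read this ``no admissible $C$'' condition off the intersection area.

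First I would reduce to down-sets. Since $u$ is sorted, I would show that any admissible $C$ must be of the form $C=\{a_1,\dots,a_i\}\cup\{b_1,\dots,b_j\}$: after adding $\Delta^{(C)}$ the stability constraints force the vertices collected by $C$ to carry the smallest values, by monotonicity of $u_{a_\bullet}$ and $u_{b_\bullet}$ (if some $a_{k'}\notin C$ had smaller index than some $a_k\in C$, the required inequalities on $u_{a_{k'}}$ and $u_{a_k}$ would clash). Then I would compute the effect of adding $\Delta^{(C)}$ for such a $C$: the values split into four blocks, changing by $+(n-j)$, $-j$, $+(m-i)$, $-i$, and one finds that $u+\Delta^{(C)}$ is stable precisely when $u_{a_i}<j\le u_{a_{i+1}}$ and $u_{b_j}<i\le u_{b_{j+1}}$, with the conventions that there is no constraint coming from the sink column $a_m$ and none from the top row when $j=n$.

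Next I would translate these inequalities through the rule ``$(x,y)$ lies in the intersection area iff $u_{a_x}\ge y-1$ and $u_{b_y}\ge x-1$.'' The four inequalities say exactly that the square $(i+1,j+1)$ belongs to the intersection area while its left neighbor $(i,j+1)$ and its lower neighbor $(i+1,j)$ do not; that is, $(i+1,j+1)$ is a re-entrant square cut off from the southwest. Using that every row and every column of the intersection area is an interval with non-decreasing endpoints (immediate from sortedness, since each column is a prefix intersected with a suffix), I would prove the clean equivalence: such a re-entrant square exists iff the intersection area is disconnected. Separately, the extremal down-set $C=A_{m-1}\cup B_n$ (equivalently, un-toppling the sink) makes $u+\Delta^{(C)}$ stable iff $u_{b_n}<m-1$, i.e. iff the northeast square $(m,n)$ is absent from the intersection area. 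Putting these together, an admissible $C$ exists iff the intersection area is disconnected or fails to reach $(m,n)$, so $u$ is recurrent iff the intersection area is connected and contains $(m,n)$; since $(1,1)$ always lies in the intersection area, this is precisely ``connected and touching both the southwest and the northeast corners.''

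The main obstacle is the geometric translation in the third step, together with the boundary bookkeeping. The delicate points are that the re-entrant square may sit in the sink column $m$ (where the red path is pinned to the top edge, so the disconnection is detected through the green path alone) or in the top row, and that the ``not reaching the northeast corner'' phenomenon is genuinely distinct from a disconnection and must be captured by the single extremal set $C=A_{m-1}\cup B_n$ rather than by a local re-entrant corner. I would therefore verify the equivalence ``admissible $C$ $\Leftrightarrow$ disconnection or missing northeast corner'' in both directions across all these boundary cases, which is where the argument needs the most care.
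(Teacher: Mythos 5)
Your proof is correct, but it follows a genuinely different route from the paper's. The paper gives no first-principles argument for Proposition \ref{prop:sorted_recurrent}: it reads the statement off the graphical interpretation of $\psi$ on the periodic diagram of Remark \ref{rem:periodic} (the $m\times n$ window slides weakly northeast and the recurrent configuration is the last stable one encountered), citing \cite[Theorem 5.13 and Remark 5.16]{addl} and \cite[Theorem 3.7]{dukesleborgne} for the underlying facts. You instead argue directly from the definition of recurrence as the fixed points of $\psi$: an admissible set $C$ (one with $u+\Delta^{(C)}$ stable) must be a prefix $\{a_1,\dots,a_i\}\cup\{b_1,\dots,b_j\}$; admissibility amounts to $u_{a_i}<j\le u_{a_{i+1}}$ and $u_{b_j}<i\le u_{b_{j+1}}$ (no constraint from the sink column, and $j=n$ forces $C=A_{m-1}\cup B_n$, admissible iff $u_{b_n}<m-1$); these inequalities say exactly that $(i+1,j+1)$ lies in the intersection area while $(i,j+1)$ and $(i+1,j)$ do not (respectively, that $(m,n)$ is absent); and such a re-entrant square exists iff the area is disconnected, because rows and columns of the area are intervals with weakly increasing endpoints. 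I checked these steps, including the boundary cases you flag, and they go through: disconnection can only occur between consecutive rows (rows are intervals), and the leftmost cell of the upper row of a break is a re-entrant square lying strictly right of the first column, since a break whose upper row starts in column $1$ would force $(1,j)$ into the area and close the break. Your approach buys a self-contained, elementary proof that essentially re-derives the content of the results the paper imports; the paper's approach buys brevity and coherence with the periodic-diagram formalism it uses everywhere else. One small point to add for completeness: you equate ``$u$ recurrent'' with ``no admissible nonempty $C$ exists'', while the definition is $\psi(u)=u$; these agree because any admissible nonempty $C$ satisfies $C\cap B_n\ne\emptyset$ (otherwise each $a_k\in C$ would receive value $u_{a_k}+n\ge n$), so passing to $u+\Delta^{(C)}$ strictly increases the degree outside the sink by $|C\cap B_n|$, whence $\sort(u+\Delta^{(C)})\ne u$.
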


\section{The operators $\Ta$ and $\Tb$}

For our analysis it will be convenient to introduce two operators
that will explain the meaning of moving our $m\times n$ grid on
the periodic diagrams. We refer again to \cite{addl} for details
and proofs (though we will use a slightly different
notation).\newline

For any sorted configuration $u$ we define two operators:
$$
\Ta(u):= \sort(u+\topplingconfiguration{a_{1}}) \mbox{ and }
\Tb(u) := \sort(u+\topplingconfiguration{b_1}).
$$

A configuration $u$ is \emph{compact} if $\displaystyle \max_{a_i
\in A_{m-1}}u_{a_{i}}-\min_{a_i \in A_{m-1}}u_{a_{i}} \leq n$ and
$\displaystyle \max_{b_i \in B_{n}}u_{b_{j}}-\min_{b_j \in
B_{n}}u_{b_{j}} \leq m$.

The main examples of compact configurations that we will encounter
in our work will be configurations with $-1\leq u_{a_i}\leq n-1$
and $-1\leq u_{b_i}\leq m-1$ for all $i$ and $j$.

The following lemma follows immediately from the definitions.
\begin{lemma} \label{lem:Ta_Tb}
The operators $\Ta$ and $\Tb$ are invertible on compact sorted
configurations. More precisely, for any compact sorted
configuration $u$, we have $\Ta^{-1}(u)=\sort (u-\Delta^{(a_{m-1})})$
and $\Tb^{-1}(u)=\sort(u-\Delta^{(b_{n})})$. In formulae
\begin{align*}
\Ta(u) & = \configuration{u_{a_2},u_{a_3},\ldots,
u_{a_{m-1}},u_{a_1}+n;u_{a_m}}{u_{b_1}-1,u_{b_2}-1,\ldots,
u_{b_n}-1};\\
 \Ta^{-1}(u) & =
\configuration{u_{a_{m-1}}-n,u_{a_1},u_{a_2},\ldots, u_{a_{m-1}};
u_{a_m}}{u_{b_1}+1,u_{b_2}+1,\ldots, u_{b_n}+1}
\end{align*}
and
\begin{align*}
\Tb(u) & = \configuration{u_{u_{a_1}}-1,u_{a_2}-1,\ldots,
u_{a_{m}}-1;u_{a_m}-1}{u_{b_2},u_{b_3},\ldots,
u_{b_{n}},u_{b_1}+m};\\
 \Tb^{-1}(u) & =
\configuration{u_{u_{a_1}}+1,u_{a_2}+1,\ldots, u_{a_{m}}+1;
u_{a_m}+1}{u_{b_n}-m,u_{b_1},u_{b_2},\ldots, u_{b_{n-1}}}.
\end{align*}
\end{lemma}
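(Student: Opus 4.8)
The plan is to establish each explicit formula by a direct computation, then observe that invertibility on compact sorted configurations follows immediately because the stated formulas for $\Ta$ and $\Ta^{-1}$ (resp. $\Tb$ and $\Tb^{-1}$) visibly compose to the identity. I would organize the argument around the four displayed formulas, treating the $\Ta$ case first and then invoking symmetry for $\Tb$.

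Let me start with $\Ta$. First I would recall that $\topplingconfiguration{a_1}$ sends $n$ out of vertex $a_1$ (its degree in $K_{m,n}$ is $n$) and adds $1$ to each of its neighbors, which are exactly the vertices $b_1,\dots,b_n$. Hence $u+\topplingconfiguration{a_1}$ raises $u_{a_1}$ by $n$ and lowers each $u_{b_j}$ by $1$, leaving all other $a$-values and the sink untouched. The only subtlety is the $\sort$ operation: I must check that after this toppling the resulting multiset of $a$-values, once reordered, is precisely $u_{a_2}\le u_{a_3}\le\cdots\le u_{a_{m-1}}\le u_{a_1}+n$, and that the $b$-values $u_{b_1}-1\le\cdots\le u_{b_n}-1$ remain sorted (which is automatic since subtracting $1$ from each preserves order). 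For the $a$-values, since $u$ is sorted we have $u_{a_1}\le u_{a_2}\le\cdots\le u_{a_{m-1}}$; the compactness hypothesis $\max_{a_i}u_{a_i}-\min_{a_i}u_{a_i}\le n$ gives $u_{a_{m-1}}\le u_{a_1}+n$, so the raised value $u_{a_1}+n$ is indeed the new maximum, and $\sort$ places it last. This yields exactly the stated formula for $\Ta(u)$.

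Next I would verify the formula for $\Ta^{-1}$ by the analogous computation with $\topplingconfiguration{a_{m-1}}$: subtracting it lowers $u_{a_{m-1}}$ by $n$ and raises every $u_{b_j}$ by $1$. Compactness again guarantees that $u_{a_{m-1}}-n\le u_{a_1}$, so after $\sort$ the value $u_{a_{m-1}}-n$ moves to the front, giving the displayed expression. I would then note that applying $\Ta$ to $\Ta^{-1}(u)$ (and vice versa) returns $u$, which is best seen directly from the two formulas: cyclically shifting the $a$-block one way and then the other, with the compensating $\pm n$ and $\mp 1$ adjustments, cancels. This proves invertibility. The $\Tb$ and $\Tb^{-1}$ formulas follow by the same reasoning with the roles of $A$ and $B$ interchanged—here $\topplingconfiguration{b_1}$ has degree $m$, sends $1$ into each $a_i$ and into the sink $a_m$, and the second compactness bound $\max_{b_j}u_{b_j}-\min_{b_j}u_{b_j}\le m$ plays the role that the first bound played for $\Ta$.

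The main (and really only) obstacle is bookkeeping the effect of $\sort$: the toppling itself is trivial linear algebra, but one must confirm that the single modified value lands in the claimed position after reordering, and this is exactly where the compactness hypothesis is used and cannot be dropped. I do not expect any deeper difficulty, which is consistent with the paper's assertion that the lemma ``follows immediately from the definitions.''
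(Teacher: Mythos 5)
Your proposal is correct and takes exactly the approach the paper intends: the paper offers no written proof (it states the lemma ``follows immediately from the definitions''), and your direct computation of $u\pm\Delta^{(a_1)}$, $u\mp\Delta^{(a_{m-1})}$ (and the $b$-analogues), together with the observation that compactness and sortedness force the single modified value to land at the end (resp.\ front) after applying $\sort$, is precisely that immediate verification spelled out. The only micro-gap is that you could note in one line that $\Ta^{-1}(u)$ and $\Tb^{-1}(u)$ are again compact sorted (e.g.\ $u_{a_{m-2}}-(u_{a_{m-1}}-n)\leq n$), so the formulas for $\Ta$, $\Tb$ legitimately apply when composing, but this is the same trivial computation.
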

In terms of our graphical description, the operator $\Ta$
corresponds to move our $m\times n$ grid of $1$ step in direction
east. Similarly the operator $\Tb$ corresponds to move the grid of
$1$ step in direction north. The graphical meaning of $\Ta^{-1}$
and $\Tb^{-1}$ is now clear (cf. \cite[Lemma 5.7]{addl}, where $\Ta^{-1}$ and $\Tb^{-1}$ are denoted by $T^{>n}$ and $T^{\leq n}$ respectively). Therefore our operators $\varphi$ and
$\psi$ can be seen easily as suitable iterations of the operators
$\Ta^{-1}$ and $\Tb^{-1}$, and $\Ta$ and $\Tb$ respectively. For example, if $u$ is a stable sorted configuration on $K_{m,n}$ and $(i,j)$ is the cell of the diagram of $u$ described right before equation \eqref{eq:varphi_toppling}, then $\varphi(u)=\Ta^{-m+i}\Tb^{-n+j-1}(u)$

We
will use these observations later in our work.\newline

We conclude this section by recalling that any compact sorted
configuration $u$ on $K_{m,n}$ is well described by
$\sort(\park(u))$ and two integers (cf. \cite[Proposition 5.12]{addl}).

\begin{proposition}
For any compact sorted configuration $u$ on $K_{m,n}$, we have a
unique pair $(k_a,k_b)\in\mathbb{Z}^2$ such that
$$
u = \Ta^{k_a}\Tb^{k_b}\sort(\park(u)).
$$
\end{proposition}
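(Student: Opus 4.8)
The plan is to exhibit the assignment $(k_a,k_b)\mapsto \Ta^{k_a}\Tb^{k_b}w$, where $w:=\sort(\park(u))$, as a \emph{free} action of $\mathbb{Z}^2$ on the set of compact sorted configurations, and then to place the given $u$ in the orbit of $w$ by means of the operator $\varphi$. The first thing I would set up is the group structure. By Lemma \ref{lem:Ta_Tb} the operators $\Ta,\Tb$ are invertible on compact sorted configurations, and a glance at the explicit formulas there shows that they also \emph{preserve} compact sortedness: the new extreme value $u_{a_1}+n$ (resp.\ $u_{b_1}+m$) keeps the relevant range within $n$ (resp.\ $m$) precisely because $u$ is compact. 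The key point is that $\Ta$ and $\Tb$ commute on such configurations: applying the formulas of Lemma \ref{lem:Ta_Tb} twice shows that both $\Ta\Tb(u)$ and $\Tb\Ta(u)$ have non-sink $A$-part $(u_{a_2}-1,\dots,u_{a_{m-1}}-1,u_{a_1}+n-1)$, sink value $u_{a_m}-1$, and $B$-part $(u_{b_2}-1,\dots,u_{b_n}-1,u_{b_1}+m-1)$, hence coincide. (This is also transparent from Remark \ref{rem:periodic}, where $\Ta$ and $\Tb$ are the unit east and north translations of the $m\times n$ window.) Thus $\langle\Ta,\Tb\rangle$ is abelian, so $\Ta^{k_a}\Tb^{k_b}$ is well defined and its factors may be freely reordered.

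For existence, I would invoke the fact (cf.\ the discussion of $\varphi$ and \cite{addl}) that iterating $\varphi$ on the stable sorted configuration $u$ terminates after finitely many steps, say $N$, on the parking sorted configuration equivalent to $u$, which is exactly $w=\sort(\park(u))$. By the identity stated just after \eqref{eq:varphi_toppling}, each application of $\varphi$ equals $\Ta^{-m+i}\Tb^{-n+j-1}$ for the cell $(i,j)$ attached to the current configuration (and equals the identity once the configuration is parking). Composing these $N$ factors and collecting powers using commutativity yields integers $K_a,K_b$ with $w=\varphi^N(u)=\Ta^{K_a}\Tb^{K_b}(u)$; inverting gives $u=\Ta^{-K_a}\Tb^{-K_b}w$, that is, existence with $(k_a,k_b)=(-K_a,-K_b)$.

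For uniqueness I would detect the two exponents by two linear statistics. Writing $s_B(v):=\sum_{j=1}^n v_{b_j}$, the formulas of Lemma \ref{lem:Ta_Tb} give, for any compact sorted $v$, that the sink value $v_{a_m}$ is unchanged by $\Ta^{\pm1}$ and decreases (resp.\ increases) by $1$ under $\Tb$ (resp.\ $\Tb^{-1}$), while $s_B$ changes by $\mp n$ under $\Ta^{\pm1}$ and by $\pm m$ under $\Tb^{\pm1}$. Because $\Ta,\Tb$ preserve compact sortedness, any word in them passes only through compact sorted configurations, so these increments add up along a composition: under $\Ta^{p}\Tb^{q}$ the sink value changes by $-q$ and $s_B$ by $-np+mq$. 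Hence if $\Ta^{p}\Tb^{q}w=w$, comparing sink values forces $q=0$, and then comparing $s_B$ forces $np=0$, so $p=0$ since $n\ge 1$. The action is therefore free, so the pair $(k_a,k_b)$ with $u=\Ta^{k_a}\Tb^{k_b}w$ is unique once it exists, completing the proof. I expect the main obstacle to be exactly the commutativity step together with the bookkeeping that every intermediate configuration stays compact sorted, since it is this that simultaneously legitimizes the reordering in the existence argument and the additivity of the two statistics in the uniqueness argument.
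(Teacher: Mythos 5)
First, a remark on the comparison: the paper does not prove this proposition at all --- it is imported from \cite[Proposition 5.12]{addl} --- so your attempt has to be judged on correctness rather than against an argument in the text. The algebraic skeleton of your proof is sound. The formulas of Lemma \ref{lem:Ta_Tb} do show that $\Ta^{\pm 1}$ and $\Tb^{\pm 1}$ map compact sorted configurations to compact sorted configurations (note the division of labour is the reverse of what you wrote: compactness of $u$ is what makes the displayed \emph{sorted} form correct, while sortedness of $u$ is what keeps the new spread within $n$, resp.\ $m$); your direct verification that $\Ta\Tb=\Tb\Ta$ on such configurations is correct; and your uniqueness argument is correct and pleasantly economical: since every intermediate configuration stays compact sorted, the increments of the sink value and of $s_B=\sum_j v_{b_j}$ add up along any composition, so $\Ta^p\Tb^q w = w$ forces $q=0$ (sink) and then $p=0$ (from $-np+mq=0$), i.e.\ the $\mathbb{Z}^2$-action is free and $(k_a,k_b)$ is unique if it exists.

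The genuine gap is in existence. You iterate $\varphi$ on ``the stable sorted configuration $u$'', but the hypothesis is only that $u$ is \emph{compact} sorted, and compact does not imply stable: compactness bounds only the spreads $\max-\min$ on each side, so a compact sorted configuration may have negative values or values $\geq n$ (resp.\ $\geq m$). This is not a marginal case: the configurations for which the paper actually needs this proposition are precisely of this kind (e.g.\ $u-\diracconfiguration{b_1}$ in Section \ref{sec:correctness}, which has a $-1$ entry), and both the operator $\varphi$ and the termination statement you invoke (iterating $\varphi$ reaches $\sort(\park(u))$ in finitely many steps) are defined and stated in the paper only for \emph{stable} sorted configurations. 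So, as written, your argument proves existence only for stable sorted $u$, a proper subcase of the statement. To close the gap you would need an extra step, for instance: show that for every compact sorted $u$ there exist $p,q$ with $\Ta^p\Tb^q(u)$ stable sorted --- this reduction suffices because $\Ta^{\pm1},\Tb^{\pm1}$ act by topplings followed by sorting, hence preserve the class of $u$ under $\topplingandpermutingequiv{S_{m-1}\times S_n}$ and therefore the value of $\sort(\park(\cdot))$ --- or else extend the periodic-diagram description to compact configurations, which is essentially what \cite{addl} does. Neither step is immediate; already bringing the $b$-values into $\{0,\dots,m-1\}$ needs a separate little argument when $u_{b_n}-u_{b_1}=m$ exactly, since then no translation of the $b$-values alone can make the configuration stable.
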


\section{A greedy algorithm for the rank on $K_{m,n}$:
correctness}\label{sec:correctness}

The aim of this section is to prove Theorem \ref{thm:correctness},
i.e. the correctness of our algorithm.

We start with a lemma.

\begin{lemma}\label{lem:0parking}
If $u$ is a parking configuration on $K_{m,n}$ (with respect to the sink $a_m$), then there exists $b_i \in B_n$ such that $u_{b_i}=0$.
\end{lemma}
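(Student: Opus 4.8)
The plan is to test the parking condition against a single, carefully chosen subset, namely the whole complement of the sink, $C := A_{m-1}\cup B_n$. First I would observe that by Remark \ref{rem:Delta} the sum of all toppling operators vanishes, so that $\topplingconfiguration{C} = \sum_{c_k\neq a_m}\topplingconfiguration{c_k} = -\topplingconfiguration{a_m}$. Since $a_m$ has degree $n$ and is adjacent exactly to $b_1,\dots,b_n$, we have $\topplingconfiguration{a_m} = n\,\diracconfiguration{a_m} - \sum_{j=1}^n \diracconfiguration{b_j}$, and hence $u - \topplingconfiguration{C} = u + \topplingconfiguration{a_m}$ raises the value at $a_m$ by $n$, lowers each value $u_{b_j}$ by $1$, and leaves every $u_{a_i}$ with $i<m$ unchanged.

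Next I would invoke the definition of parking: since $u$ is parking and $C$ is non-empty, the configuration $u - \topplingconfiguration{C}$ must have a negative value outside the sink. The only coordinates outside the sink that changed are the $b_j$'s, each decreased by exactly $1$; all the $u_{a_i}$ with $i<m$ are untouched and therefore still non-negative (as $u$ is non-negative outside the sink). Consequently the negativity must occur at some $b_i$, giving $u_{b_i} - 1 < 0$, i.e. $u_{b_i} \leq 0$. Combined with $u_{b_i}\geq 0$ (non-negativity outside the sink) this forces $u_{b_i} = 0$, as desired.

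The crux of the argument is really the choice of $C$: testing the parking property against the complement of the sink is exactly what isolates the edges leaving towards $a_m$, and in the bipartite graph those edges emanate only from the $b_j$'s, since no $a_i$ with $i<m$ is adjacent to $a_m$. This is why the forced negativity lands on a $b$-vertex rather than on an $a$-vertex, and why its value must be exactly $0$ rather than merely small. I do not expect any genuine obstacle once this $C$ is fixed; the remainder is the short computation with $\topplingconfiguration{a_m}$ recorded above, together with the observation that toppling a set can only increase the values at vertices lying outside that set.
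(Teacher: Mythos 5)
Your proof is correct and is essentially the same as the paper's: both arguments test the parking condition against the single subset $C=A_{m-1}\cup B_n$, use Remark \ref{rem:Delta} to identify $u-\Delta^{(C)}$ with $u+\Delta^{(a_m)}$, and observe that the only values outside the sink that decrease (each by exactly $1$) are those at the neighbors $b_j$ of the sink. The only cosmetic differences are that you argue directly where the paper argues by contradiction, and the paper states the result for an arbitrary simple loopless graph before specializing to $K_{m,n}$.
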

\begin{proof}
The complete bipartite graph $K_{m,n}$ is a particular case of a simple
graph without loops, i.e. there is at most one edge between two of
its vertices and the endpoints of any of its edges are distinct.

We prove by contradiction the more general result that in a
parking configuration $u$ on a simple graph without loops, at
least one vertex $c_k$ among the neighbors $N(c_s)$ of the sink
$c_s$ has value $u_{c_k}=0$.

Since in $K_{m,n}$ the neighbors $N(a_m)$ of the sink $a_m$ form
exactly the set $B_n$, this would imply our result.

Let $C$ be the set of vertices of our graph, and for any $c_k\in
C$ let us denote by $\topplingconfiguration{c_k}$ the
corresponding toppling operator of the vertex $c_k$. Notice that
the Remark \ref{rem:Delta} is still valid in this more general
setting.

Now, if $u_{c_k} \neq 0$ for any $c_k \in N(c_s)$, since $u$ is
parking, it is non-negative outside the sink $c_s$, hence $u_{c_k}
\geq 1$ for $c_k \in N(c_s)$. But in this case the configuration
$$
u-\Delta^{(C-\{c_s\})}= u-\sum_{c_h \in C-\{c_s\}}
\topplingconfiguration{c_k}=u+\topplingconfiguration{c_s}
$$
is also non-negative outside the sink $c_s$, since only each
neighbor of $c_s$ loses exactly $1$. This contradicts the
assumption that $u$ is parking.
\end{proof}

As we already observed in Section \ref{sec:stable_sorted}, \underline{we} can always \underline{assume that our configurations are sorted}. So from now on we will do it. We will observe later (in Section \ref{sec:optimization}) that sorting does not increase the overall complexity of the algorithm. 

\begin{remark}
Notice that Lemma \ref{lem:0parking} is an immediate consequence
of our pictorial description of parking sorted configurations: the
square $(1,1)$ is always in the intersection area, and there
cannot be two squares in the same row, therefore the first two
steps of the green path (starting from the southwest corner) must
be east and then north, hence the value at $b_1$ must be $0$. We gave anyway our
proof, since it works in a more general setting.
\end{remark}

It follows from Lemma \ref{lem:0parking} that in a parking sorted
configuration $u$ on $K_{m,n}$ we must have $u_{b_1}=0$. Hence in
the \textbf{while} of our algorithm we can always take $i=1$.

So for our analysis we need to understand how to compute $\sort
(\park(u-\diracconfiguration{b_1}))$ for a parking sorted
configuration $u$ on $K_{m,n}$. As usual, a pictorial
interpretation of this operation will be very useful.

First of all, recall that in the intersection area of the diagram
of a parking sorted configuration there cannot be two squares in
the same row, and there is always the square $(1,1)$. Now notice
that the diagram of $u-\diracconfiguration{b_1}$ can be obtained
from the diagram of $u$ by simply replacing the first two steps
(starting from the southwest corner) of the green path, which have
to be east and then north, by the steps north and then east. See Figure \ref{fig:algo_1_2} for
an example.

\begin{figure}[h]
\includegraphics[width=60mm,clip=true,trim=15mm 200mm 90mm 10mm]{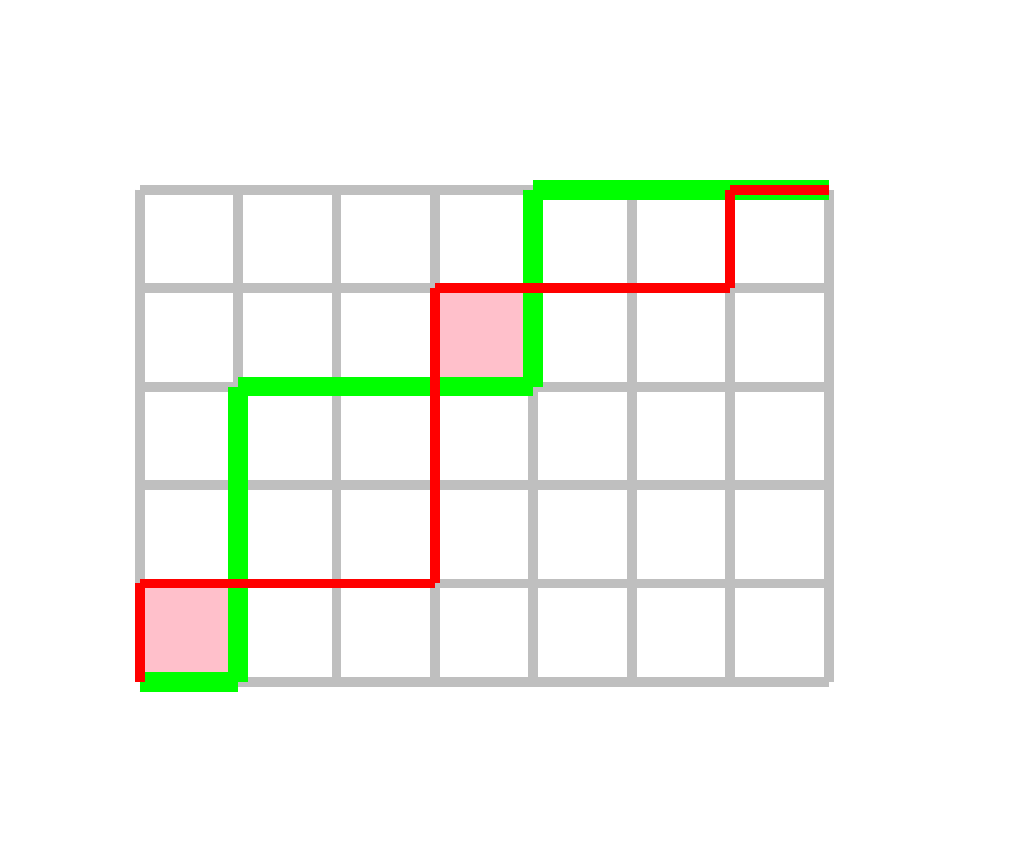}
\includegraphics[width=60mm,clip=true,trim=15mm 200mm 90mm 10mm]{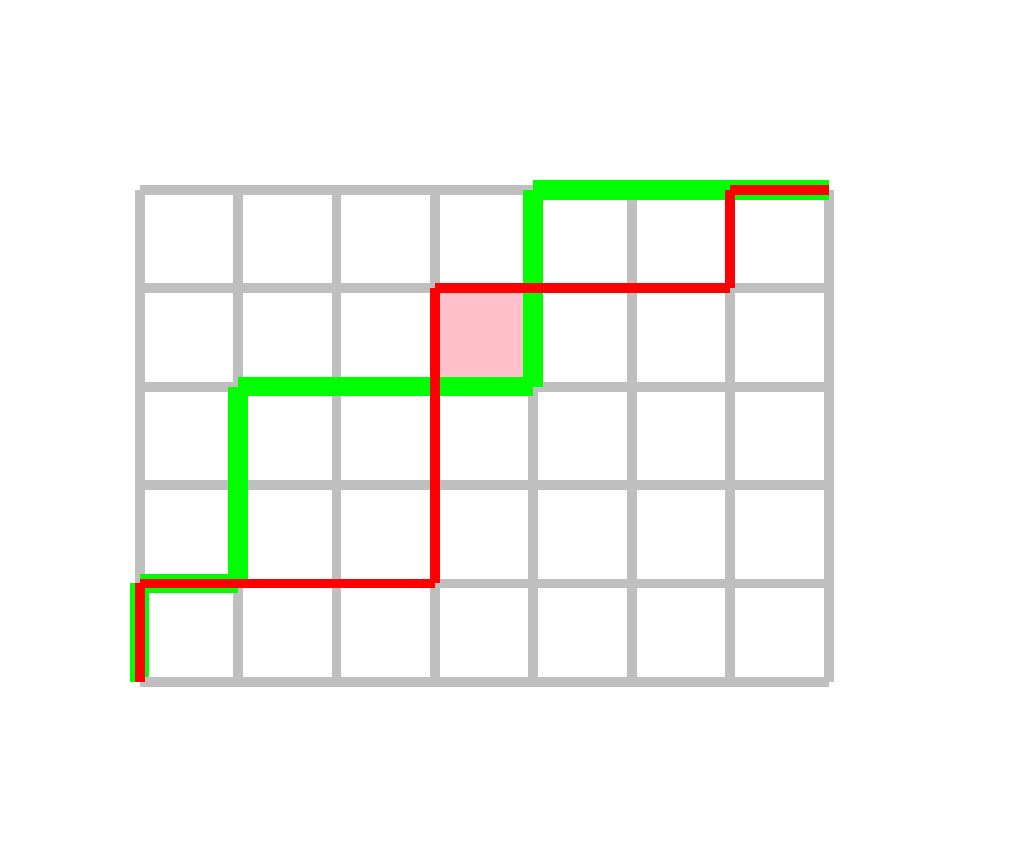}
\caption[ciccia]{On the left, the parking sorted configuration
$u=\configuration{0,0,0,3,3,3;*}{0,0,0,3,3}$. On the right, the
configuration
$u-\diracconfiguration{b_1}=\configuration{0,0,0,3,3,3;*}{-1,0,0,3,3}$.}
\label{fig:algo_1_2}
\end{figure}

Notice that the graphical interpretation of the operators
$\varphi$, $\psi$, $\Ta$ and $\Tb$ works also for the diagrams of
$u-\diracconfiguration{b_1}$, as this is still a compact
configuration. In particular, recall from Remark
\ref{rem:periodic} that in order to find the parking sorted
configuration equivalent to $u-\diracconfiguration{b_1}$ we need
to look at the periodic diagram obtained by reproducing
periodically the green path and the red path without its last east
step (next to the northeast corner) of the diagram of
$u-\diracconfiguration{b_1}$. Since $u$ was parking, it is clear
from the graphic interpretation that the parking sorted
configuration of the periodic diagram is obtained by moving up
(weakly northeast) the frame to reach the first stable
configuration (moving down we will not encounter any stable
configuration): this is going to be our $\sort(
\park(u-\diracconfiguration{b_1}))$. See Figure \ref{fig:algo_3}
for the example of the computation of $\sort (
\park(u-\diracconfiguration{b_1}))$ of Figure \ref{fig:algo_1_2}.
\begin{figure}[h]
\includegraphics[width=70mm,clip=true,trim=15mm 160mm 50mm 10mm]{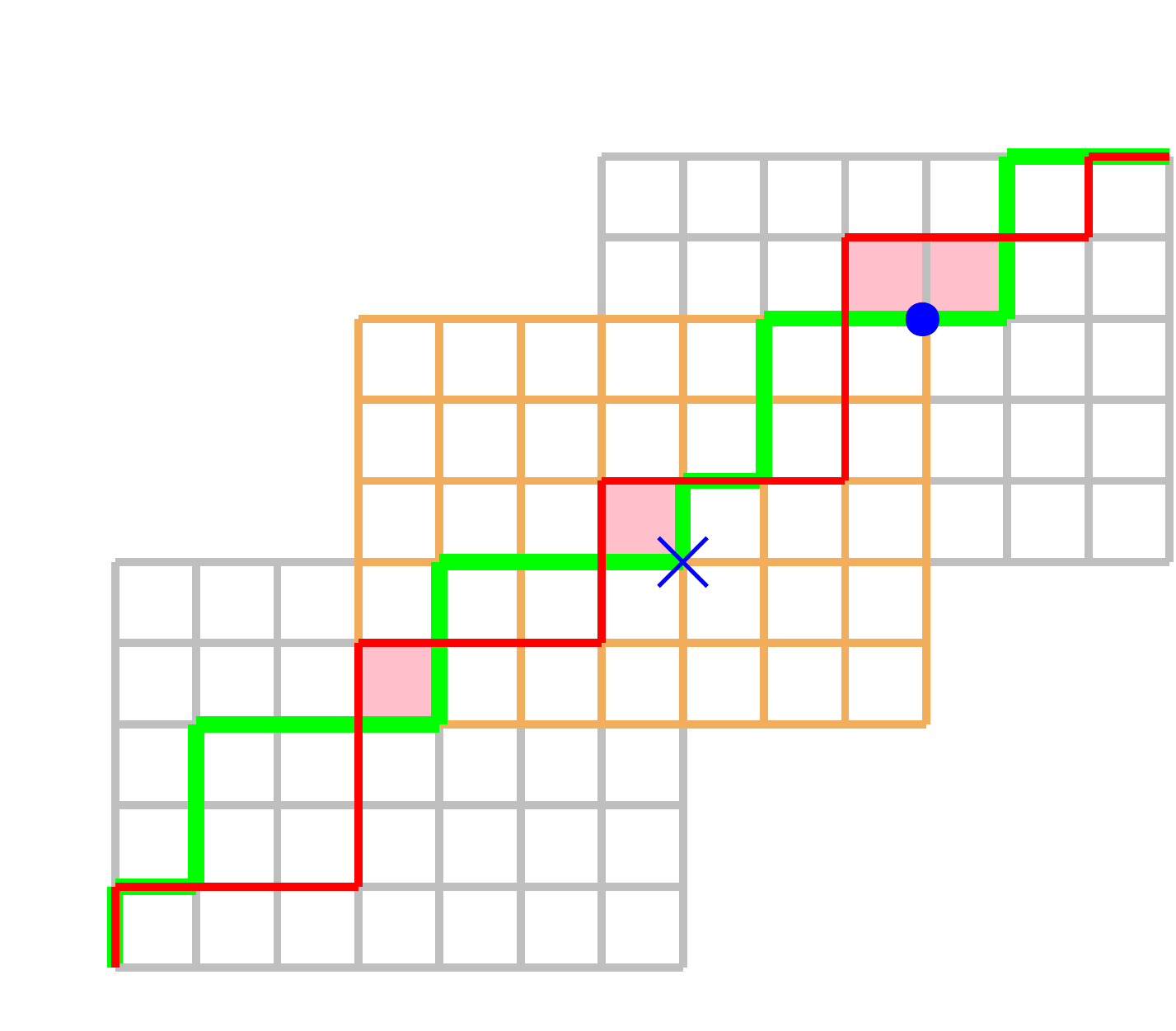}
\caption[ciccia]{This is the diagram of the construction of
$\sort(
\park(u-\diracconfiguration{b_1}))=\configuration{0,0,0,2,2,2;*}{0,0,3,4,4}$.}
\label{fig:algo_3}
\end{figure}

This provides the pictorial interpretation we were looking
for.\newline

In order to prove the correctness of our algorithm we need one
more definition and two lemmas.\newline

The \emph{support} of a non-negative configuration $f$ is the set
of vertices $\{c_k\}_k\subseteq A_m\cup B_n$ such that
$f_{c_k}>0$.

\begin{lemma}\label{lem:restrictedsupport}
Let $u$ be a configuration on $K_{m,n}$. Then there exists a proof $f$ for the rank of $u$ whose support is included in $B_n$.
\end{lemma}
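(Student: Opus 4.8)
The plan is to show that any proof for the rank can be transformed into one supported on $B_n$ without increasing its degree. Recall that a proof for the rank of $u$ is a non-negative configuration $f$ with $u-f$ non-effective and $\degree(f)=\rank(u)+1$ minimal. By Lemma \ref{lem:symmetry}(8) and the remarks following it, we may and do assume $u$ is sorted (indeed, we may work with $\sort(\park(u))$). The key idea is that any ``chip'' of $f$ sitting on a vertex $a_i\in A_{m-1}$ or on the sink $a_m$ can be moved onto a vertex of $B_n$ without destroying the property that subtracting $f$ leaves a non-effective configuration, and without raising the total degree.

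\medskip

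First I would reduce to the combinatorial criterion for non-effectiveness. By Theorem \ref{thm:effective_park_nonnegative}, a configuration $v$ is non-effective precisely when $\park(v)$ is \emph{not} non-negative, i.e. when the value of $\park(v)$ at the sink $a_m$ is strictly negative (since $\park(v)$ is automatically non-negative outside the sink). So ``$u-f$ is non-effective'' is equivalent to the single scalar condition $\park(u-f)_{a_m}<0$. The whole problem thus becomes: among non-negative $f$ of minimal degree forcing $\park(u-f)_{a_m}<0$, find one supported in $B_n$.

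\medskip

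Next I would exploit the symmetry between $A_m$ and $B_n$ encoded in the toppling relation, together with the structure of the algorithm itself. The crucial observation is that subtracting a dirac $\diracconfiguration{b_i}$ at a $B_n$-vertex is exactly the decrement performed in the \textbf{while} loop, and by Lemma \ref{lem:0parking} such a vertex with value $0$ always exists in a parking configuration, so decrements along $B_n$ are always ``available'' and always strictly effective in lowering the sink value of the associated parking configuration. To convert a proof with some mass on $A_m$, I would argue that removing one unit from $f$ at a vertex $a_i$ (or at $a_m$) and instead removing one unit at a suitable $b_j$ yields a configuration $u-f'$ that is toppling equivalent to, or at least no more effective than, $u-f$; concretely, one uses that $\Delta^{(a_i)}$ and the dirac configurations differ by topplings, so shifting the support of $f$ from an $A$-vertex to a $B$-vertex changes $u-f$ only by a toppling (which preserves $\park$ and hence non-effectiveness) plus a controlled correction. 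Iterating this relocation clears all $A$-mass while keeping $\degree(f)$ fixed and preserving non-effectiveness, producing the desired proof supported in $B_n$.

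\medskip

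\textbf{The main obstacle} I anticipate is making the ``relocation'' step rigorous: when I move a unit of $f$ from $a_i$ to some $b_j$, I must guarantee that the resulting $f'$ is still non-negative and that $u-f'$ remains non-effective, \emph{without} accidentally lowering the degree below $\rank(u)+1$ (which is impossible by minimality) or raising it. The delicate point is that topplings are global operations, so replacing $\diracconfiguration{a_i}$ by $\diracconfiguration{b_j}$ is not literally a toppling but only agrees with one up to a boundary effect at the sink; I would handle this by choosing $b_j$ to be a vertex whose value in the relevant parking configuration is $0$ (guaranteed by Lemma \ref{lem:0parking}), so that the decrement lands exactly where the greedy algorithm would place it, and by invoking the uniqueness of $\park$ (and its compatibility with sorting, Lemma \ref{lem:symmetry}(5)) to control the sink value throughout. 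An alternative, cleaner route, which I would pursue if the direct relocation proves too fiddly, is to appeal directly to the correctness of the greedy algorithm (Theorem \ref{thm:correctness}): the algorithm \emph{constructs} a proof $f=\sum_k \diracconfiguration{b_{i_k}}$ whose support is visibly contained in $B_n$, so the lemma would follow immediately once Theorem \ref{thm:correctness} is in hand — though, since this lemma is stated as an ingredient \emph{toward} that theorem, I would instead give the self-contained relocation argument to avoid circularity.
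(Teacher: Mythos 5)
Your overall strategy coincides with the paper's: relocate the mass of a proof from $A$-vertices to $B_n$ one unit at a time, keeping the degree fixed and non-effectiveness intact. But the step you propose to justify the relocation does not work, and it is precisely the step where the whole content of the lemma lies. You say that replacing $\diracconfiguration{a_i}$ by $\diracconfiguration{b_j}$ in $f$ ``changes $u-f$ only by a toppling plus a controlled correction''. That is false: $\diracconfiguration{a_i}-\diracconfiguration{b_j}$ has degree $0$ but is in general not a sum of toppling operators (distinct vertices of $K_{m,n}$ generally represent distinct classes modulo the toppling lattice), so the modified $u-f'$ is not toppling equivalent to $u-f$, $\park(u-f')\neq\park(u-f)$, and non-effectiveness is not automatically preserved. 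The paper never claims toppling equivalence; it proves something weaker but sufficient, namely that the two relevant parking configurations have the same (negative) value at the sink, and establishing this requires the graphical machinery of the periodic diagram.

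Concretely, the paper's argument runs as follows. Let $H(u)$ be the minimal possible $A$-mass $\sum_{a_i}f_{a_i}$ over all proofs $f$, assume $H(u)>0$, and pick $f$ attaining it, with $f_{a_i}\geq 1$. For $g:=f-\diracconfiguration{a_i}$, minimality of $\degree(f)$ forces $\park(u-g)$ to be non-negative; moreover its value at $a_i$ must be $0$, since otherwise $\park(u-g)-\diracconfiguration{a_i}$ would still be non-negative and (by Proposition \ref{prop:park_sort}) still parking, hence equal to $\park(u-f)$, making $u-f$ effective --- a contradiction. Thus $v:=\sort(\park(u-g))$ satisfies $v_{a_1}=0$ and, by Lemma \ref{lem:0parking}, also $v_{b_1}=0$, so the cell $(1,1)$ is an isolated connected component of the intersection area of the diagram of $v$. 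The crucial geometric observation --- which your sketch is missing --- is that computing $\sort(\park(v-\diracconfiguration{a_1}))$ and $\sort(\park(v-\diracconfiguration{b_1}))$ both amount, in the periodic diagram, to sliding the $m\times n$ grid so that its southwest corner lands on the southwest corner of the next connected component, and the amount subtracted from the sink (the number of iterations of $\Tb$) is identical in the two computations. Hence both resulting configurations have the same sink value, which is negative because $u-f$ is non-effective; consequently $\widetilde{f}:=f-\diracconfiguration{a_i}+\diracconfiguration{b_j}$ (for the appropriate $b_j$) is again a proof, with $A$-mass $H(u)-1$, contradicting minimality. Choosing ``$b_j$ with value $0$'' and ``invoking uniqueness of $\park$'', as you suggest, does not by itself yield this equality of sink values; and your fallback of citing Theorem \ref{thm:correctness} is, as you note yourself, circular. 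So the proposal has a genuine gap at its central step.
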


\begin{proof}
Let $H(u)=\min \{ \sum_{a_i\in A_m} f_{a_i} | \mbox{ $f$ is a
proof for the rank of $u$}\}$.

We show by contradiction that $H(u)=0$, implying the lemma. We
assume that $H(u)>0$ and we consider a proof $f$ for the rank of
$u$ such that $\sum_{a_i \in A_m} f_{a_i} = H(u)$. Since $H(u)\geq
1$ there exists an $i$ such that $f_{a_i}\geq 1$. Consider the
configuration $g:=f-\diracconfiguration{a_i}$, which is still
non-negative, but it is not a proof for the rank of $u$. So in
particular, by Theorem \ref{thm:effective_park_nonnegative}, $\park(u-g)$ must be non-negative. Now if
$\park(u-g)-\diracconfiguration{a_i}$ is still non-negative, than, because of Proposition \ref{prop:park_sort}, it must be still parking,
and in fact $\park(u-g)-\diracconfiguration{a_i}=\park(u-f)$,
contradicting the fact that $f$ is a proof for the rank of $u$. Therefore we must have
that the value of $\park(u-g)$ at $a_i$ is $0$.

Set $v:=\sort(\park(u-g))$. We just saw that $v_{a_1}=0$. As $v$
is a parking sorted configuration on $K_{m,n}$, by Lemma
\ref{lem:0parking} we must have $v_{b_1}=0$.

So in the diagram of $v$ the box $(1,1)$ constitutes its own
connected component of the intersection area.

The crucial observation is that when we compute
$\sort(\park(v-\diracconfiguration{a_1}))$ and
$\sort(\park(v-\diracconfiguration{b_1}))$, in both cases in our
periodic diagram we are going to move our $m\times n$ grid up
(strictly northeast), moving the southwest corner of the grid onto
the southwest corner of the next connected component of the
intersection area, and this corner will be exactly the same in the
two computations. It turns out that the number removed from the
sink in this operation is exactly the height of this corner: in
fact this is the number of iterations of the operator $\Tb$ needed
to reach the corner, each of which decreases by $1$ the value at
the sink (the number of iterations of the operator $\Ta$ needed to
reach the corner is irrelevant, as $\Ta$ does not change the value
at the sink).

All this shows that
$\sort(\park(u-f))=\sort(\park(v-\diracconfiguration{a_1}))$ and
$\sort(\park(u-f+\diracconfiguration{a_i}-\diracconfiguration{b_j}))=\sort(\park(v-\diracconfiguration{b_1}))$ for a suitable $j$, and they both
have the same value at the sink, so this value must be negative in both
cases. Therefore also
$\widetilde{f}:=f-\diracconfiguration{a_i}+\diracconfiguration{b_j}$
is a proof for the rank of $u$. But $\sum_{a_i\in
A_{m}}\widetilde{f}=H(u)-1$, giving a contradiction. Therefore we
must have $H(u)=0$ as claimed.
\end{proof}

\begin{lemma}\label{lem:greedychoice}
Let $u$ be a configuration on $K_{m,n}$. If $u$ is non-negative and $u_{b_i}=0$, then there exists a proof
$g$ for the rank of $u$ such that $g_{b_i} > 0$.
\end{lemma}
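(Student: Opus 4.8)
The plan is to reduce the statement to the assertion that removing a single chip at the vanishing vertex $b_i$ strictly lowers the rank, and then to identify this drop as the real content. Concretely, I claim it suffices to prove
$$
\rank(u-\diracconfiguration{b_i})=\rank(u)-1 .
$$
Indeed, the inequality $\rank(u)\le \rank(u-\diracconfiguration{b_i})+1$ is automatic: if $h$ is a proof for the rank of $u-\diracconfiguration{b_i}$, then $h+\diracconfiguration{b_i}$ is non-negative and $u-(h+\diracconfiguration{b_i})=(u-\diracconfiguration{b_i})-h$ is non-effective, so $\degree(h+\diracconfiguration{b_i})\ge \rank(u)+1$. Conversely, granting the displayed equality, let $h$ be a proof for $u-\diracconfiguration{b_i}$, so that $\degree(h)=\rank(u)$; then $g:=h+\diracconfiguration{b_i}$ is non-negative, has $g_{b_i}\ge 1$, satisfies $u-g=(u-\diracconfiguration{b_i})-h$ non-effective, and has degree $\rank(u)+1$. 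Thus $g$ is a proof for the rank of $u$ with $g_{b_i}>0$, as wanted.

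So everything reduces to the inequality $\rank(u-\diracconfiguration{b_i})\le \rank(u)-1$. To see why the hypotheses should give this, I would first normalise by Lemma \ref{lem:symmetry}: since $u$ is non-negative and $u_{b_i}=0$ is a minimal value among the $u_{b_j}$, I can choose a sorting permutation in $S_{m-1}\times S_n$ carrying the index $i$ to $1$, and since rank, proofs and $\park$ all transform equivariantly, I may assume $u$ is sorted with $u_{b_1}=0$ and look for a proof positive at $b_1$. In this situation the square $(1,1)$ of the diagram of a parking representative is an isolated connected component of the intersection area, which is exactly the feature exploited in the proof of Lemma \ref{lem:restrictedsupport}.

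The constructive heart of the argument mirrors that proof. Starting from a proof $f$ for $u$ with $\mathrm{supp}(f)\subseteq B_n$ (Lemma \ref{lem:restrictedsupport}) with $f_{b_1}=0$ and a vertex $b_j$, $j\neq 1$, with $f_{b_j}\ge 1$, I would set $g:=f-\diracconfiguration{b_j}$ and consider $p:=\park(u-g)$. Because $f-\diracconfiguration{b_j}$ has degree $\rank(u)$, the configuration $u-g$ is effective, so $p\ge 0$; and the ``shrinking preserves parking'' principle (if $p-\diracconfiguration{b_j}$ were non-negative outside the sink it would still be parking, forcing $p-\diracconfiguration{b_j}=\park(u-f)$ and hence $u-f$ effective, against $f$ being a proof) gives $p_{b_j}=0$. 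One then wants to invoke the crucial observation of Lemma \ref{lem:restrictedsupport}: removing a chip at $b_1$ and removing a chip at $b_j$ from the sorted parking form slide the $m\times n$ frame up to the very same corner of the periodic diagram of Remark \ref{rem:periodic}, so that $\park(u-f-\diracconfiguration{b_1}+\diracconfiguration{b_j})$ and $\park(u-f)$ share the (negative) value at the sink, whence $f-\diracconfiguration{b_j}+\diracconfiguration{b_1}$ is again a proof, this time positive at $b_1$.

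The main obstacle is precisely this last transfer. The hypothesis $u_{b_1}=0$ (equivalently $u_{b_i}=0$) is a statement about the representative $u$, whereas rank and effectiveness are invariants of the whole toppling class; in the language above this is the danger that $p_{b_1}>0$, i.e.\ that parking refills the distinguished empty vertex, in which case removing a chip there is wasted and the naive exchange fails. Overcoming this is exactly the base-point phenomenon: one must show that $b_1$ is not a base point of the linear system of $u$, a fact witnessed by $u$ itself (an effective representative vanishing at $b_1$), but which can only be converted into the rank drop by working with the reduced (parking) representative. I expect this step---matching the empty vertex $b_1$ with the empty vertex $b_j$ of $p$ through the periodic-diagram geometry of the operators $\Ta$ and $\Tb$---to carry the full weight of the proof, the remainder being bookkeeping with the definition of the rank and the symmetries of Lemma \ref{lem:symmetry}.
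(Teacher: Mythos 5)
Your reduction of the lemma to the rank--drop statement $\rank(u-\diracconfiguration{b_i})=\rank(u)-1$ is correct, and your preparatory steps are sound (a proof $f$ supported on $B_n$ via Lemma~\ref{lem:restrictedsupport}, the existence of $b_j$ with $u_{b_j}-f_{b_j}<0$, and the vanishing of $\park(u-f+\diracconfiguration{b_j})$ at $b_j$). But the argument is not complete: the exchange step, i.e.\ transferring chips of the proof from $b_j$ to $b_i$, is exactly what you leave open, as you yourself say (``I expect this step \ldots to carry the full weight of the proof''). That step \emph{is} the lemma, and the route you sketch for it --- matching empty vertices of \emph{parking representatives} through the periodic-diagram geometry of $\Ta$ and $\Tb$ --- runs straight into the obstacle you flag: nothing in your setup rules out that $\park(u-f+\diracconfiguration{b_j})$ is positive at $b_i$, and you offer no mechanism to handle that case. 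So there is a genuine gap, not just bookkeeping left to do.

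The missing idea is much lighter than the machinery you invoke, and it is what the paper does: perform the exchange on an \emph{unparked} configuration where both relevant values vanish by construction. Assuming $f_{b_i}=0$ (otherwise $g=f$ works), set $N:=f_{b_j}-u_{b_j}\geq 1$, $\widetilde{f}:=f-N\diracconfiguration{b_j}$ (still non-negative since $u_{b_j}\geq 0$) and $w:=u-\widetilde{f}$. Then $w_{b_i}=u_{b_i}-f_{b_i}=0$ --- this is where the hypothesis $u_{b_i}=0$ enters --- and $w_{b_j}=0$ by the choice of $N$. A short degree argument shows $N\diracconfiguration{b_j}$ is a proof for the rank of $w$: indeed $w-N\diracconfiguration{b_j}=u-f$ is non-effective, and any cheaper $g'$ with $w-g'$ non-effective would make $\widetilde{f}+g'$ contradict the definition of $\rank(u)$. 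Now the transposition of $K_{m,n}$ exchanging $b_i$ and $b_j$ fixes $w$ (both values are $0$), so by Lemma~\ref{lem:symmetry} the configuration $N\diracconfiguration{b_i}$ is also a proof for the rank of $w$, and $g:=\widetilde{f}+N\diracconfiguration{b_i}$ is a proof for the rank of $u$ with $g_{b_i}=N>0$. Note the structural point you overlooked: the periodic-diagram geometry is genuinely needed in Lemma~\ref{lem:restrictedsupport} because there one exchanges an $A$-vertex with a $B$-vertex, and no automorphism of $K_{m,n}$ fixing the sink swaps such a pair; here both vertices lie in $B_n$, so the graph automorphism does the whole job, with no parking and no geometry. (Your reduction, combined with this exchange applied to $w$, also yields your rank-drop equality, so your framing could be salvaged --- but only once this step is supplied.)
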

\begin{proof}
Let $f$ be a proof for the rank of $u$ of support included in
$B_n$, whose existence is guaranteed by
Lemma~\ref{lem:restrictedsupport}. If $f_{b_i} > 0$, then $g=f$
has the stated properties. Otherwise, we have $f_{b_i}=0$. Since
$u-f$ is non-effective and the support of $f$ is in $B_n$, there
exists $b_j$  ($\neq b_i$) such that $u_{b_j}-f_{b_j} < 0$. Set
$\widetilde{f}:=f-(f_{b_j}-u_{b_j})\diracconfiguration{b_j}$ and
$w:=u-\widetilde{f}$. Notice that $\widetilde{f}$ is still
non-negative, and, by definition, we have $w_{b_i} = 0 = w_{b_j}$.

We claim that $(f_{b_j}-u_{b_j})\diracconfiguration{b_j}$ is a
proof for the rank of $w$. Indeed, $w-(f_{b_j}-u_{b_j})\diracconfiguration{b_j}=u-f$ is non-effective, and if there is a non-negative configuration $g$ with 
$$
\degree(g)<\degree((f_{b_j}-u_{b_j})\diracconfiguration{b_j})=(f_{b_j}-u_{b_j})
$$ 
for which $w-g$ is non-effective, then $h:=f-(f_{b_j}-u_{b_j})+g$ is non-negative, 
$$
\degree(h)<\degree(f)=\rank(u)+1
$$ 
and $w-g=u-h$ is non-effective, contradicting the definition of $\rank(u)$.

Now, since $w_{b_i} = 0 = w_{b_j}$,
using the symmetry exchanging exactly $b_i$ and $b_j$, we deduce
that also $(f_{b_j}-u_{b_j})\diracconfiguration{b_i}$ is a proof
for the rank of $w$. Therefore
$g:=\widetilde{f}+(f_{b_j}-u_{b_j})\diracconfiguration{b_i}$ is
also a proof for the rank of $u$, and by construction $g_{b_i}>0$, as we wanted.
\end{proof}

We are now ready to prove the correctness of our algorithm, i.e.
to prove Theorem \ref{thm:correctness}.

\begin{proof}[Proof of Theorem \ref{thm:correctness}]
The run of the algorithm is possible since the vertex $b_i$ in the
\textbf{while} loop always exists according to
Lemma~\ref{lem:0parking}. This algorithm terminates since each
loop iteration decreases the degree of the configuration by $1$
and a configuration of negative degree is necessarily
non-effective. Hence it remains to show that $f$ is a proof for
the rank of the input $u$, where $(rank,f)$ is the output of the
algorithm. Since it is clear that $f$ is non-negative and $rank =
\degree(f)-1$, it remains to show that $rank$ is the actual rank
of the input $u$.

We prove the correctness by induction on the total number $k$ of
iterations of the \textbf{while} loop. If $k=0$, then the first
computed parking configuration is non-effective and $f=0$ is the
expected proof for the rank of $u$, which is indeed $-1$ in this
case.

If $k>0$, then the first loop iteration removes $1$ from the value
of $\park(u)$ at some vertex $b_i$, and, by inductive hypothesis,
the following $k-1$ iterations compute a proof
$f':=f-\diracconfiguration{b_i}$ for the rank of
$u':=\park(u)-\diracconfiguration{b_i}$. Let $g$ be a proof for
the rank of $\park(u)$ such that $g_{b_i} > 0$, which exists by
Lemma~\ref{lem:greedychoice}.

Observe that $g':=g-\diracconfiguration{b_i}$ is a proof for the
rank of $u'=\park(u)-\diracconfiguration{b_i}$. Indeed, on one
hand we have that $u'-g'=\park(u)-g$ is non-effective, hence
$$
\rank(u')\leq \degree(g')-1=(\degree(g)-1)-1=\rank(\park(u))-1;
$$
on the other hand, for any configuration $w$ on $K_{m,n}$, we have
the obvious inequality $\rank(w+\diracconfiguration{b_i})\leq
\rank(w)+1$, hence
$$
\rank(\park(u))\leq
\rank(\park(u)-\diracconfiguration{b_i})+1=\rank(u')+1.
$$
Therefore $\rank(u')=\rank(\park(u))-1=\degree(g')-1$, so $g'$ is
a proof of $u'$ as claimed.

Hence, by definition of proofs for the rank of
$u'=\park(u)-\diracconfiguration{b_i}$, we must have
$\degree(g')=\degree(f')$. As a consequence
$\degree(g)=\degree(f)$, so $f$ is also a proof for the rank of
$\park(u)$ as expected.
\end{proof}

\section{A useful reformulation of the algorithm} \label{sec:reformulation}

Our main goal is to provide an efficient algorithm to compute the
rank on $K_{m,n}$. To achieve this we need to provide a detailed
analysis of our original algorithm, so that we will be able to
optimize it.

In order to do this, it will be convenient to reformulate our
algorithm in terms of the operators $\Ta$ and $\Tb$. We need some
more definitions.

\emph{The cell of a north step} of a path is the cell whose east
side is the mentioned step. In our periodic diagram of a stable
sorted configuration $u$ on $K_{m,n}$, the periodic red path
(whose period consists of $m+n-1$ steps) is called \emph{the cut},
and it disconnects the cells of the underlying infinite grid into
a \emph{left component} and a \emph{right component}.

We claim that, as long as we are interested in computing the rank
of a configuration $u$ on $K_{m,n}$, the following algorithm is
equivalent to our original algorithm, i.e. it computes the rank of
$u$ using the same principles.

\begin{lstlisting}[frame=single,texcl,mathescape]
def new_compute_rank(u):
  $u$ = $\sort(\park(u))$
  $rank$ = $-1$
  while $u_{a_m}$ >= $0$:
    while $u_{b_1}$ >= $0$:
      $u$ = $\Ta(u)$
    $u$ = $\Tb(u)$
    if $u_{a_{m-1}}$ >= $n-1$:
      $rank$ = $rank+1$
    $u_{a_m}$ = $u_{a_m}-1$ # each iteration of $\Tb$ takes $1$ from the sink
  return $rank$
\end{lstlisting}

\begin{theorem} \label{thm:new_correctness}
This new algorithm computes the rank of a configuration $u$ on
$K_{m,n}$.
\end{theorem}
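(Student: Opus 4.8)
The plan is to prove that the new algorithm computes the same quantity as the original algorithm of Theorem~\ref{thm:correctness}, by interpreting both as movements of the $m\times n$ grid in the periodic diagram, where, as recalled in Section~\ref{sec:reformulation} and Section~7, the operator $\Ta$ slides the grid one step east and $\Tb$ slides it one step north (removing $1$ from the sink). First I would reduce, as in Section~\ref{sec:stable_sorted}, to the case where the input has already been replaced by $\sort(\park(u))$, a parking sorted configuration; by Lemma~\ref{lem:0parking} such a configuration has $u_{b_1}=0$, which is exactly what starts the inner \textbf{while} loop. The inner loop applies $\Ta$ (each application lowering every $u_{b_j}$ by $1$) until $u_{b_1}<0$, i.e. until the grid has slid east across the next north step of the green path; the subsequent single $\Tb$ slides the grid one step north and removes $1$ from the sink. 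Thus one pass of the outer loop realizes exactly one northward slide of the grid preceded by a maximal eastward slide, and the sink decreases by $1$ per pass.

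Next I would compare this with the graphical description of the original algorithm given in Section~\ref{sec:correctness}: starting from a parking sorted configuration, subtracting $\diracconfiguration{b_1}$ and re-parking amounts to sliding the grid upward (weakly northeast) in the periodic diagram to the southwest corner of the next connected component of the intersection area, the number of $\Tb$ steps in this slide being exactly the height of that corner, which is the amount removed from the sink. Since the eastward slides performed by the inner loop follow the green path and do not change the sink, the two algorithms perform the same northward slides (hence the same total decrease of the sink), and both stop precisely when the sink first becomes negative. By Theorem~\ref{thm:effective_park_nonnegative} this is exactly the moment the current configuration becomes non-effective, so the termination points agree.

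The heart of the proof is the rank count. The original algorithm increments the rank once per iteration, i.e. once per upward slide to a new parking position, whereas the new algorithm increments it once for each $\Tb$ step satisfying $u_{a_{m-1}}\ge n-1$. I would show these two counts coincide by analysing how the red path (the cut) sits inside the grid: the condition $u_{a_{m-1}}\ge n-1$ means precisely that the red path has reached the top edge of the grid by column $m-1$. The key computation is that immediately after leaving a parking-type position the first $\Tb$ step always satisfies this: since a parking configuration is non-negative outside the sink we have $u_{a_1}\ge 0$, the forced initial $\Ta$ raises the largest $a$-value to $u_{a_1}+n$ (it is the largest by compactness), and the ensuing $\Tb$ makes $u_{a_{m-1}}=u_{a_1}+n-1\ge n-1$; I would then verify that none of the intermediate $\Tb$ steps of the same upward slide meet the condition. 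This exhibits a bijection between the qualifying $\Tb$ steps and the iterations of the original algorithm, so the returned value of $rank$ equals $\rank(u)$.

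The main obstacle is this last matching. The delicate point is that the original algorithm changes the toppling class (the degree drops by $1$ at every iteration), while the new one stays in a single class and merely tracks the sink value separately; one must therefore argue at the level of the red path rather than of the configurations themselves. The crucial observation making this legitimate is that the rank-increment test $u_{a_{m-1}}\ge n-1$ depends only on the $a$-values, while removing chips from $b_1$ affects only the green path, so the sequence of red paths (equivalently, of $a$-value patterns) traversed is the same in both processes. Carrying out carefully the case analysis on the intermediate $\Tb$ steps -- proving that within a single upward slide exactly one step fires the condition -- is where the real work lies, and I would organize it around the dynamics of $\max_i u_{a_i}$ under the alternation of eastward ($\Ta$) and northward ($\Tb$) moves, using compactness and the explicit formulas of Lemma~\ref{lem:Ta_Tb} to keep all values in the required range.
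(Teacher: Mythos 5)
Your geometric setup (the grid following the green path, one $\Tb$ per outer pass, the sink decreasing by one per north step, termination on the sign of the sink) agrees with the paper's reading of the new algorithm, but the heart of your rank-matching argument rests on a false premise. After the first pass, every configuration visited by the new algorithm has the form $\Ta^{x}\Tb^{y}(\sort(\park(u)))$, so it stays in the toppling class of $u$ with unchanged degree, whereas the parking configurations visited by the original algorithm have degree dropping by one at each iteration; hence the new trajectory never again passes through a parking position. Concretely, for $u=\configuration{0,0,0,3,3,3;21}{0,0,0,3,3}$ on $K_{7,5}$, the upward slide matching the original algorithm's second iteration begins at $\configuration{-3,-3,0,0,0,2;*}{2,2,6,6,6}$, which is neither parking nor even non-negative, so your anchor ``$u_{a_1}\geq 0$ because the configuration is parking'' is unavailable. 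Moreover, when the inner loop performs $k$ applications of $\Ta$, Lemma~\ref{lem:Ta_Tb} gives $u_{a_{m-1}}=u_{a_k}+n-1$ after the next $\Tb$, not $u_{a_1}+n-1$: in the example $k=3$, $u_{a_1}=-3$, and it is $u_{a_3}=0$ that makes the test fire. Your fallback ``crucial observation'' is also false: the two processes do \emph{not} traverse the same sequence of $a$-value patterns, because each subtraction of $\diracconfiguration{b_1}$ in the original algorithm freezes the green north step of the visited row, so the original algorithm's grid follows a modified, non-periodic green path and crosses already-visited rows in different columns than the new algorithm does.

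What is missing is exactly the paper's key idea, which replaces equality of configurations (or of $a$-values) by equality of a coarser invariant: the left/right classification of the crossed cells. In the periodic diagram used by the new algorithm, every green north step drifts one cell east every $n$ rows, and once its cell lies in the right component of the cut it stays there forever; in the original algorithm's modified diagram the same step drifts identically until it first enters the right component, after which the subtraction of $\diracconfiguration{b_1}$ freezes it there. Hence, row by row and period by period, ``this crossing increments $rank$ in the new algorithm'' and ``the original algorithm stops at this row'' are the same event, and since both algorithms remove one unit from the sink per north step and halt on the same sign condition, the two counts agree. That argument, together with the identification (via the period $m+n-1$ of the red path, cf. Figure~\ref{fig:Yvan2}) of the test $u_{a_{m-1}}\geq n-1$ with ``the cell just crossed lies in the right component,'' is the actual content of the proof. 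A case analysis organized around $\max_i u_{a_i}$ and parking positions cannot succeed as stated, because whether the test fires at a given north step is determined by where the red path sits relative to that step, not by whether the step is the first of a slide, and your parking-based bookkeeping does not track the former.
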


In the rest of this section we will give a proof of Theorem
\ref{thm:new_correctness}, via the following pictorial
interpretation of it.\newline

Recall that in terms of our periodic diagrams, the operators $\Ta$
and $\Tb$ correspond to move our $m\times n$ grid of $1$ step in
directions east and north respectively.

First of all we claim that what this algorithm does is to move the
grid in such a way that its southwest corner moves along the
periodic green path (whose period consists of $m+n$ steps) in
direction northeast, following it step by step; it decreases the
value $u_{a_m}$ in the sink at every iteration of $\Tb$, and it
increases the value in $rank$ after every iteration of $\Tb$ for
which the southwest corner of the $m\times n$ grid crosses a north
step whose cell is in the right component, and in fact in the
intersection area.

The only non-obvious statement is the last one, which is proved
once we show that the condition in the \textbf{if}, i.e.
$u_{a_{m-1}}\geq n-1$, corresponds exactly to the fact that the
cell of the just crossed north step of the periodic green path is
in the right component. But this follows simply from the
observation that the period of the red path consists of $m+n-1$
steps, hence the horizontal red step in the column $m-1$ of our
$m\times n$ grid is precisely $n$ cells higher than the horizontal
red step in the column to the left of the $m\times n$ grid. This
is easily understood by looking at the diagrammatic example in
Figure \ref{fig:Yvan2}.
\begin{figure}[h]
\includegraphics[width=70mm,clip=true,trim=15mm 170mm 50mm 25mm]{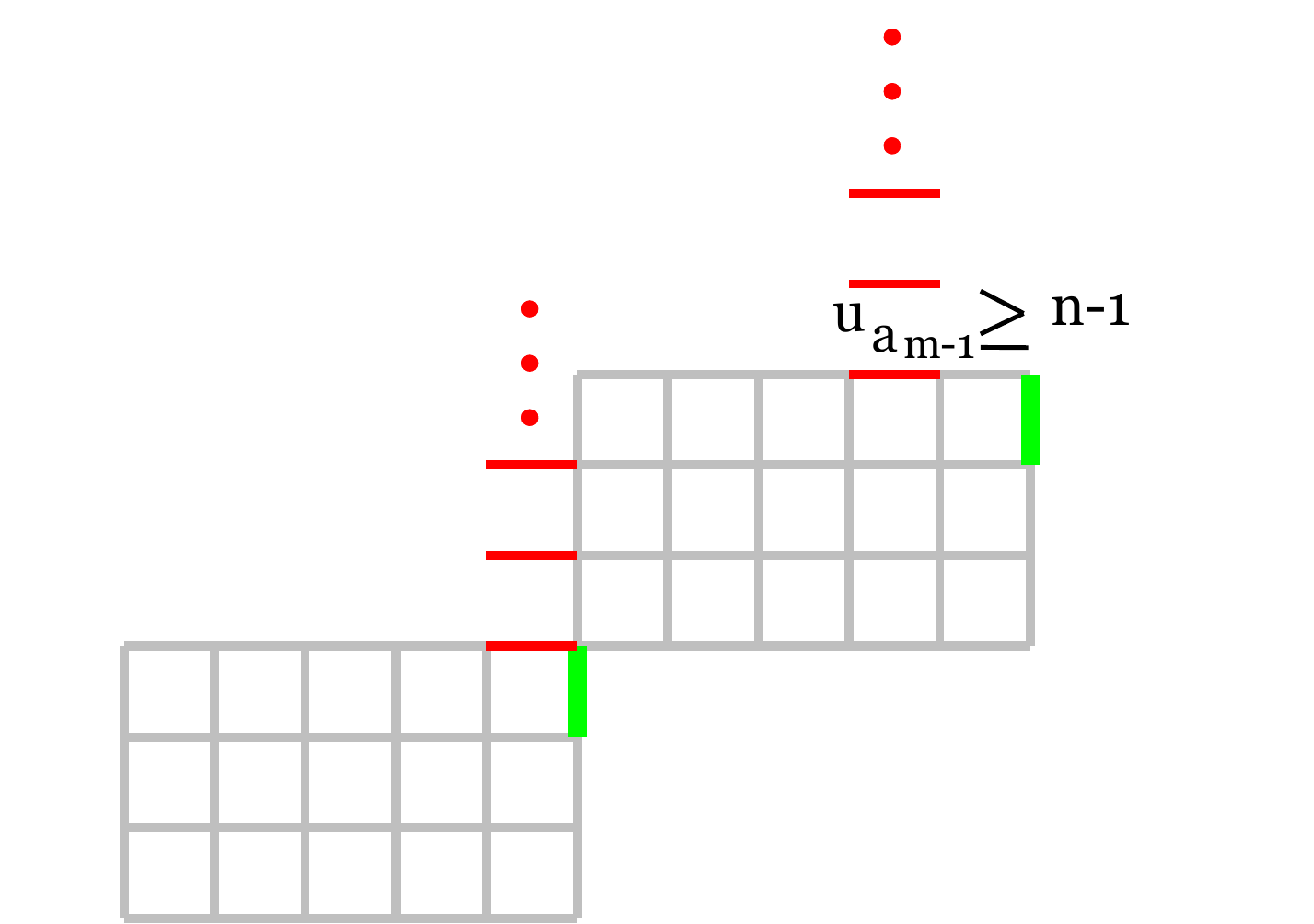}
\caption[ciccia]{This diagrammatic example on $K_{5,3}$ explains
how the condition $u_{a_{m-1}}\geq n-1$ translates into pictures.}
\label{fig:Yvan2}
\end{figure}

Now that we understand the pictorial interpretation of this new
algorithm, we need to understand why this is equivalent to our
original algorithm.

Given a parking sorted configuration $u$ on $K_{m,n}$, let us set
$$
\psi_0(u):=\sort(\park(u-\diracconfiguration{b_1})).
$$
Recall that computing $\psi_0(u)$ corresponds to a loop iteration
of our original algorithm. We already gave a graphical description
of the action of the operator $\psi_0$ in Section
\ref{sec:correctness}. We saw that applying $\psi_0$ to a parking
stable configuration $u$ corresponds to replacing the first two
steps (starting from the southwest corner) of the green path,
which have to be east and then north, by the steps north and then east, and then, on the new periodic diagram, moving the $m\times n$ grid up (weakly
northeast) in order to have the next available cell of the
intersection area (there is at most $1$ of them in each row) in
the southwest corner. In our original algorithm at every such
iteration of $\psi_0$, we increased the local variable $rank$ by
$1$, while we subtracted from the sink the number of north steps
that we needed to move our grid.

We want to see what happens to the diagram when we apply $\psi_0$.
To see this, it is useful to keep track of the periodic diagram
that we built by iterating the operator $\psi_0$. In other words,
we can construct a single diagram that encodes all the iterations
of $\psi_0$. This diagram will be similar to the periodic diagram
that we used to compute the iterations of $\psi$ (or  $\varphi$),
but NOT identical. Indeed, this diagram will not be periodic. This
is due to the fact that at every iteration of $\psi_0$ we first
subtract $\diracconfiguration{b_1}$, and then we follow its own
periodic diagram.

In particular, notice that $u$ and $\psi_0(u)$ are not toppling
equivalent, as they have different degrees, while this is
obviously the case for $u$ and $\psi(u)$.

We know that without subtracting $\diracconfiguration{b_1}$ for
each iteration of $\psi_0$, if we just looked at the periodic
diagram and moved our $m\times n$ grid $m-1$ steps east and $n$
steps north, we would see the same red path, but we would see the
green path shifted by $1$ step towards east. Notice in particular
that starting with a parking sorted configuration $u$, we would
not see a parking configuration, since for example in the first
row the intersection area would contain exactly two cells.

Now iterating $\psi_0$, every time we have $1$ cell in the
intersection area, we modify the green path so that in the next
periodic repetition, i.e. $n$ rows northern in the periodic
diagram, we would still see exactly one cell, exactly in the same
position: this is due to the subtraction of
$\diracconfiguration{b_1}$. On the other hand, for any row that
does not contain a cell in the intersection area, in the periodic
repetition, i.e. $n$ rows northern in the periodic diagram, we
will see the corresponding north step of the green path shifted by
$1$ towards east. At this point an example is in order.
\begin{example} \label{ex:reformulation}
We already considered the parking sorted configuration
$u=\configuration{0,0,0,3,3,3;*}{0,0,0,3,3}$. Notice that there
are only two rows in which there is a cell of the intersection
area, so we want to compare $u$ with $\psi_0(\psi_0(u))$. We
already draw a picture of how to compute $\psi_0(u)$, see Figure
\ref{fig:algo_3}. If we compute $\psi_0(\psi_0(u))$, we get
$\psi_0(\psi_0(u))=\configuration{0,0,0,3,3,3;*}{0,1,1,3,4}$, see
Figure \ref{fig:algo_4}.

\begin{figure}[h]
\includegraphics[width=70mm,clip=true,trim=15mm 160mm 50mm 10mm]{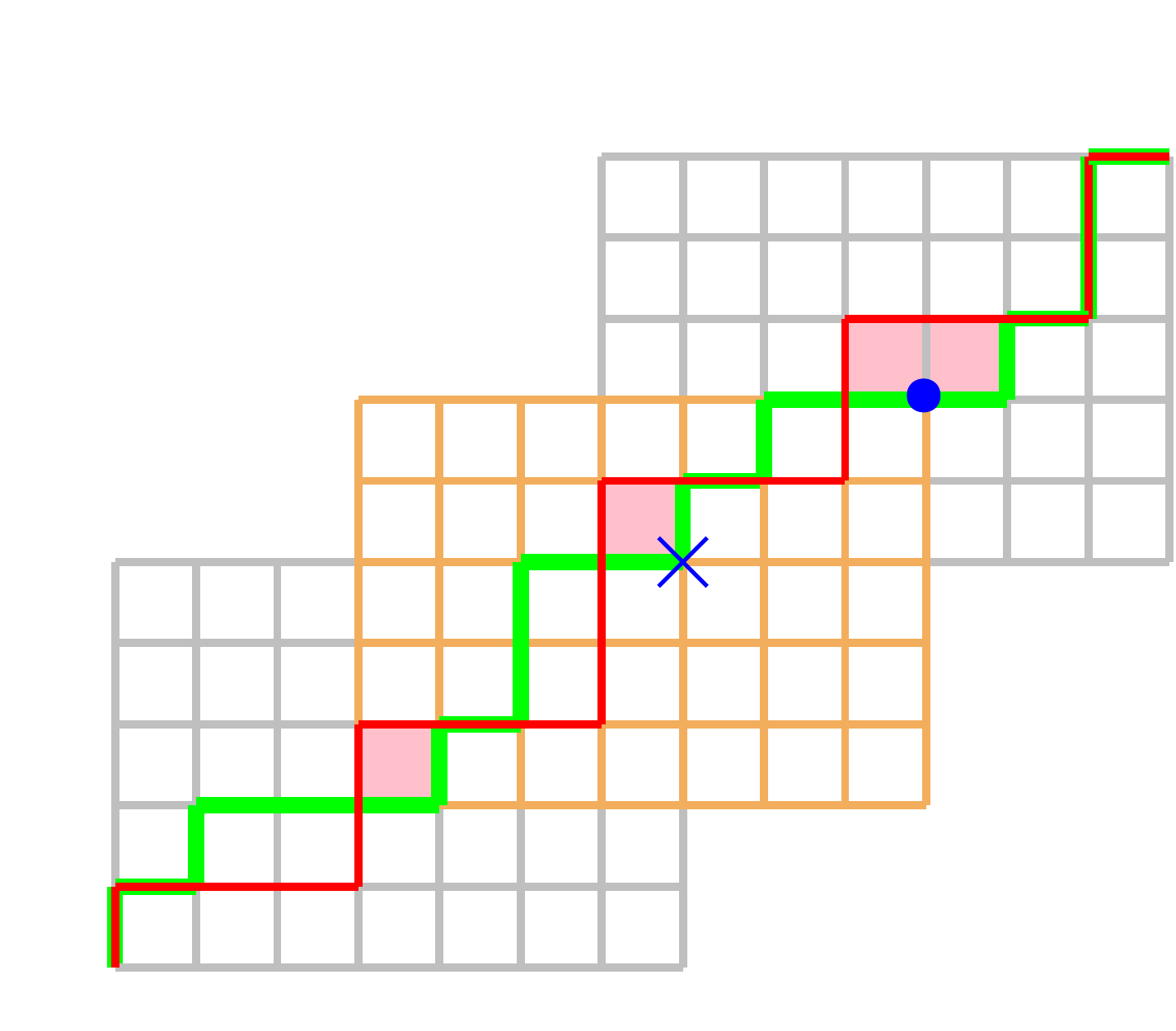}
\caption[ciccia]{This is the diagram of the construction of
$\psi_0(\psi_0(u))=\configuration{0,0,0,3,3,3;*}{0,1,1,3,4}$.}
\label{fig:algo_4}
\end{figure}

As claimed, the same picture could be obtained by producing a
single diagram that keeps track of the iterations of $\psi_0$. In
order to draw this diagram, for every iteration of $\psi_0$ we
move the $m\times n$ grid to the next (weakly northeast) stable
configuration along the usual periodic diagram of the
configuration appearing in the $m\times n$ grid, like we would do
in the computation of $\psi$, but in this new diagram we modify
the green path at the $n$-th row (we start counting the rows from
the bottom row of the $m\times n$ grid, which will be row $0$), by
keeping the cell of the intersection area in that row where it
used to be. This counts for the subtraction of
$\diracconfiguration{b_1}$.

Then we repeat the procedure with $\psi_0(u)$ in place of $u$. The
diagram for $\psi_0(\psi_0(u))$ of our example is shown in Figure
\ref{fig:algo_5}: the pink cells are the intersection area of the
diagram, and the numbers in them are the number of the
corresponding row (the bottom row is the row $0$); the yellow
cells are in the positions in which the iterations of $\psi_0$
modify the green path, and the number in them corresponds to the
number of the bottom row of the $m\times n$ grid at the time of
its creation, i.e. at the time of the iteration of $\psi_0$ that
modified the green path.

\begin{figure}[h]
\includegraphics[width=70mm,clip=true,trim=15mm 160mm 50mm 10mm]{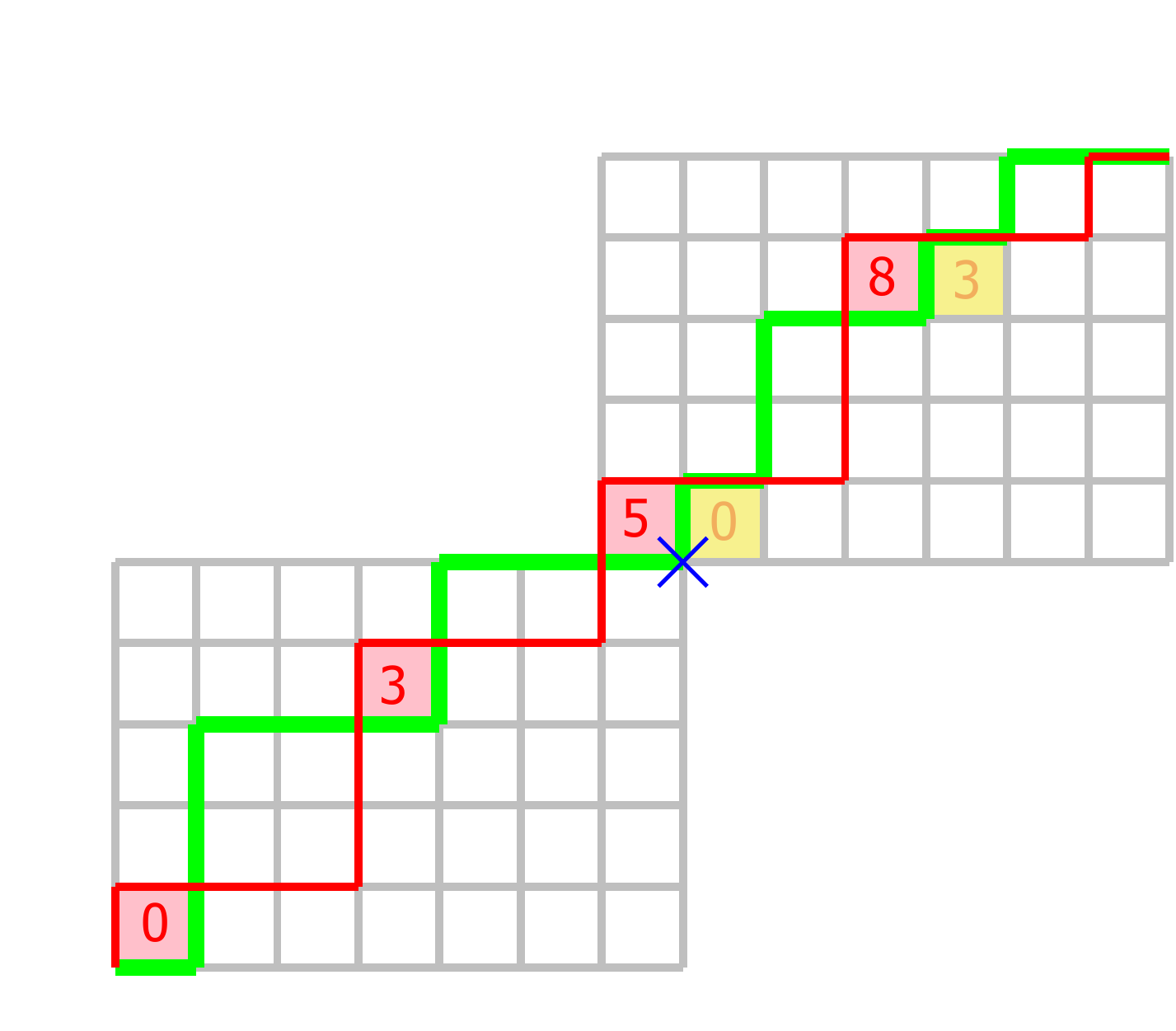}
\caption[ciccia]{This is the diagram of the construction of
$\psi_0(\psi_0(u))=\configuration{0,0,0,3,3,3;*}{0,1,1,3,4}$. The
yellow cells underscore the positions in which the iterations of
$\psi_0$ modify the green path.} \label{fig:algo_5}
\end{figure}
\end{example}

As expected, looking at the diagram $n$ rows northern, the rows
corresponding to the ones in which $u$ had a cell in the
intersection area are unchanged, while in the other rows the green
path got shifted by $1$ step in direction east.

Observe that in general the red path in this diagram will be
periodic of a period consisting of $m+n-1$ steps (as in the case
of the periodic diagram for computing $\psi$), while in general
the green path will not be periodic. In fact it becomes periodic
when, at a given iteration of $\psi_0$, in the (parking sorted)
configuration every row has precisely $1$ cell in the intersection
area: from then on the green path will also be periodic of a
period consisting of $m+n-1$ steps, so that every $n$ iterations
$\psi_0$ we will see always the same configuration outside the
sink.

See Figure \ref{fig:algo_6} for more iterations of $\psi_0$ on the
same example $u=\configuration{0,0,0,3,3,3;*}{0,0,0,3,3}$. The
meaning of the colored cells and the numbers inside them is the
same as the one explained for Figure \ref{fig:algo_5}.

\begin{figure}[h]
\includegraphics[width=130mm,clip=true,trim=25mm 210mm 110mm 25mm]{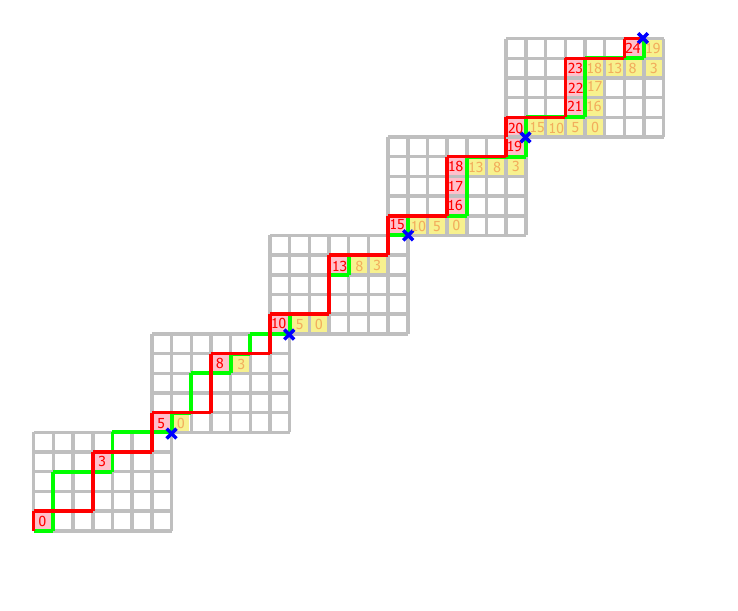}
\caption[ciccia]{This is the diagram of the first few iterations
of $\psi_0$ on $u=\configuration{0,0,0,3,3,3;*}{0,0,0,3,3}$.}
\label{fig:algo_6}
\end{figure}

Notice that indeed after we get a configuration with precisely $1$
cell in the intersection area in every row, the rest of the
diagram becomes periodic (look at the top $10$ rows of Figure
\ref{fig:algo_6}).

Notice also that with our enhanced diagram with the yellow cells,
like the one in Figure \ref{fig:algo_6}, one could also deduce the
proof $f$ given by the algorithm by looking at the last iteration
of $\psi_0$: the proof $f$ for the configuration in the $m\times
n$ diagram of the last iteration of $\psi_0$ will be the
configuration with $f_{a_j}=0$ for all $j$, and $f_{b_i}$ equal to
the number of yellow cells in row $i+1$ (remember that the bottom
row is row $0$) for all $i=1,2,\dots,n$.\newline

Coming back to our new algorithm, let's see why it is equivalent
to our original one. In our new algorithm we move the southwest
corner of the $m\times n$ grid along the green path in the
northeast direction, subtracting $1$ from the sink at every north
step, and adding $1$ to the local variable $rank$ every time we
cross a north step whose cell is in the right component, i.e. in
the intersection area. The observation is that, since we are
moving in our original periodic diagram, every $n$ rows each
vertical green step gets shifted by $1$ step towards east. Now,
once such a green vertical step falls in the right component, i.e.
the correspond cell is in the right component, then it remains in
that component indefinitely. Observe that our new algorithm will
ignore such a vertical green step until it falls in the right
component, and since then it will always increase by $1$ the local
variable $rank$ after crossing it.

This corresponds precisely to the situation of our original
algorithm, where the operator $\psi_0$ skips the rows where the
vertical green steps are in the left component, until they fall in
the right one, so that their cell is in the intersection area, and
since then the algorithm will always stop by that cell, increasing
the local variable $rank$ by $1$ at each visit.

Since obviously both algorithms decrease the value at the sink by
$1$ at every vertical move of the $m\times n$ grid, this proves
the equivalence of the two algorithms.

\section{Cylindric diagrams} \label{sec:cylindric}

Given a parking sorted configuration $u$ on $K_{m,n}$, we
associate to it a \emph{cylindric diagram}, which allows to
compute efficiently the rank of $u$ given the diagram of $u$ and
the value $u_{a_m}$ at the sink.

Consider a parking sorted configuration $u$ on $K_{m,n}$, look at
the corresponding diagram, and consider all the cells to the right
of the green path in its $n$ rows. If the value $u_{a_m}$ at the
sink is negative, then of course $\rank(u)=-1$, and there is
nothing else to do. If $u_{a_m}\geq 0$, start writing the numbers
$0,1,2,\dots,u_{a_{m}}$ in the cells to the left of the vertical
steps of the green path, from the bottom row to the top row. After
writing $0,1,\dots,n-1$, write $n+i$ in the cell to the right of
$i$ for all $i=0,1,\dots,n$. And then write $2n+i$ in the cell to
the right of $n+i$ for $i=0,1,\dots,n-1$. And so on until you
write the number $u_{a_m}$. The only thing to recall is to write
the number $j$ in red if the cell containing $j$ is in the right
component (i.e. to the right of the red path), and in green if it
is in the left component (i.e. to the left of the red path). This
will be the \emph{cylindric diagram} of $u$. See Figure
\ref{fig:new_algo_2} for an example.

\begin{figure}[h]
\includegraphics[width=90mm,clip=true,trim=15mm 200mm 30mm 10mm]{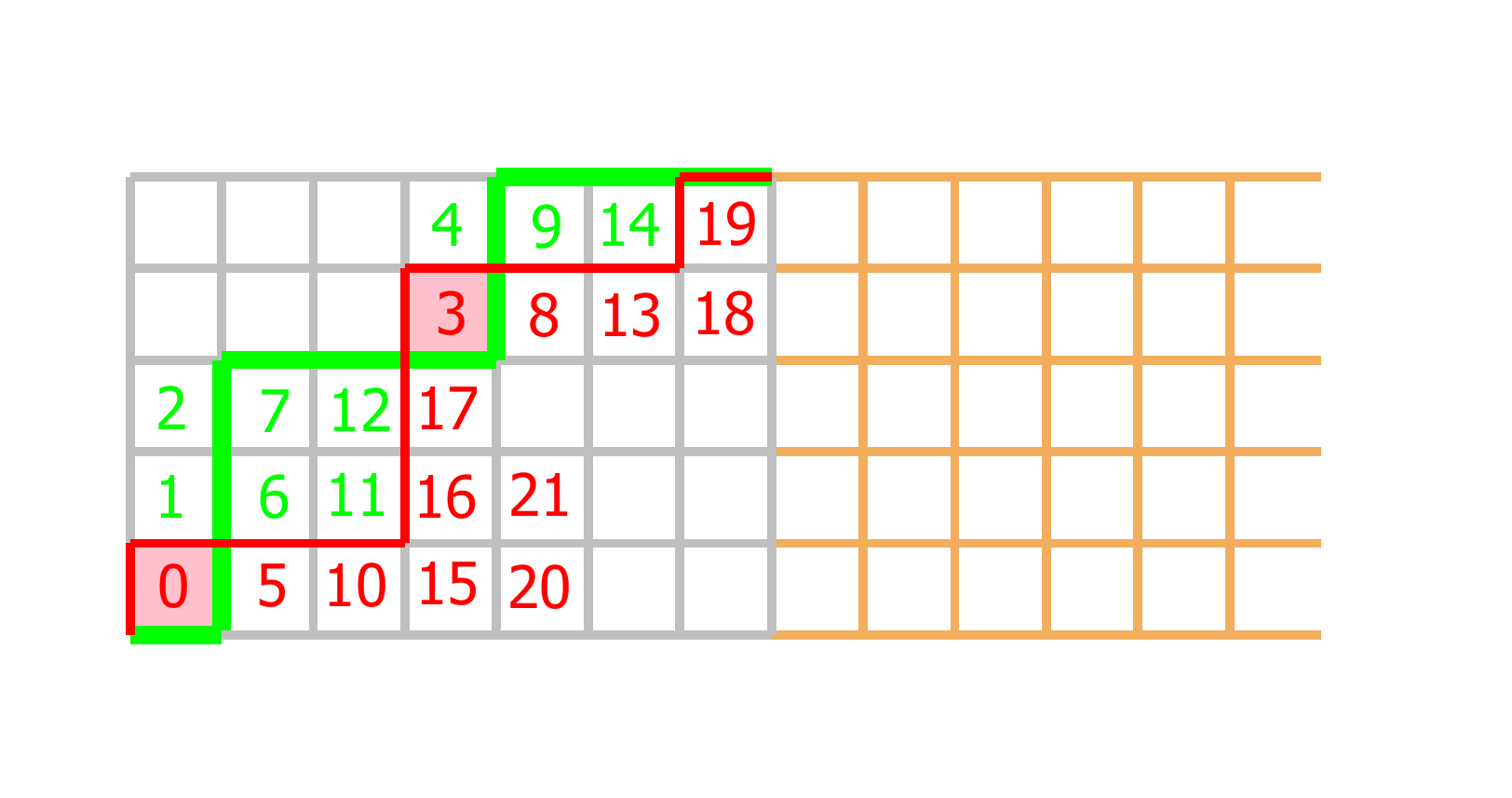}
\caption[ciccia]{The parking sorted configuration
$u=\configuration{0,0,0,3,3,3;21}{0,0,0,3,3}$ with the
corresponding cylindric diagram.} \label{fig:new_algo_2}
\end{figure}

We claim that $\rank(u)$ is equal to $-1$ plus the number of red
numbers in the cylindric diagram of $u$.

To see this, imagine that running our new algorithm on the
periodic diagram of $u$, any time the southwest corner of the
$m\times n$ grid crosses a vertical green step, we write to its
left a label with the number $i$ of the vertical steps that we
already crossed. So for example next to the first vertical green
step we write $0$. Because of the periodicity of the diagram,
since every $n$ rows the vertical green steps get shifted by $1$
step in direction east, we can imagine to ``roll up'' (that's why
the name ``cylindric'' diagram) our periodic diagram modulo $n$ in
the north-south direction and modulo $m-1$ in the west-east
direction. In this way we get always the same red path, but the
green path would be reproduced shifted by $i$ steps in direction
east at the $i+1$-th cycle of $n$ rows: instead of drawing it all
the times, we just draw the first original green path in our
picture, but we keep track of the numerical labels by recording
them in our cylindric diagram where they should appear. So this is
going to give us precisely the cylindric diagram of $u$, except
for the colors of the numbers.

In this way, it is clear from our pictorial description of the
algorithm that we increase the local variable $rank$ by $1$
precisely when the cell of the vertical green step that we just
crossed is in the right component, i.e. when in our cylindric
diagram we recorded a red number, while ignoring all the other
vertical green steps, i.e. when we recorded a green number.

Since the labels count $-1$ plus the number of vertical steps that
we performed in our new algorithm, and since we stop writing our
labels at the number $u_{a_m}$, this means that when we wrote the
last red number it was the last time that the value in our sink
was non-negative, and at the next red number, i.e. at the next
iteration of our original algorithm, it would become negative.
This and the correctness of our new algorithm proves our claim.

\section{The $r$-vectors and a formula for the rank}

In this section we introduce a useful tool that will enable us to
optimize our algorithm. We need some notation.

Consider a stable sorted configuration $u$ on $K_{m,n}$. For all
$i=1,2,\dots,n$, let
$$
r_i:=u_{b_i}+1-|\{u_{a_j}\mid j \neq m, u_{a_j}+1\leq i-1\}|.
$$
Notice that $r_i$ is simply the difference between the distance
from the vertical step of the red upper path in row $i$ (the bottom row is row $1$) from the left edge of
the $m\times n$ grid, and the distance from the vertical step of
the green lower path in row $i$ from the left edge of the $m\times n$ grid.

We remark that for a stable sorted configuration, we always have
$r_1\geq 1$, and $-m+2\leq r_i\leq m$ for all $i$. Moreover, for a
parking sorted configuration, Proposition \ref{prop:park_sort}
says that we always have $r_1=1$, and $-m+2\leq r_i\leq 1$ for all
$i$. This is the translation in terms of the $r$-vector of the
fact that in each row of the diagram of a parking sorted
configuration there is at most one box in the intersection area.

Let us call $r(u):=(r_1,r_2,\dots,r_n)$ the \emph{$r$-vector} of
the stable sorted configuration $u$ on $K_{m,n}$.
\begin{example}
For example, for the parking sorted configuration
$u=\configuration{0,0,0,3,3,3;*}{0,0,0,3,3}$ on $K_{7,5}$ we have
$r(u)=(1,-2-2,1,-2)$, cf. Figure \ref{fig:algo_1_2}.
\end{example}

Recall that in Section \ref{sec:reformulation} we introduced the
operator $\psi_0$, which is defined on any parking sorted
configuration $u$ by
$\psi_0(u)=\sort(\park(u-\diracconfiguration{b_1}))$. In that same
section we saw how the action of $\psi_0$ can be seen as a
suitable composition of the operators $\Ta$ and $\Tb$. Moreover,
we draw a diagram to explain the iterations of the operator
$\psi_0$, cf. Figure \ref{fig:algo_6}.

We want to understand the relation between $r(u)$ and
$r(\psi_0(u))$. In order to see this, we define an operator
$\widetilde{\psi_0}$ on the set of vectors $r=(r_1,r_2,\dots,r_n)$
with $r_i\in \mathbb{Z}$ and $-m+2\leq r_i\leq 1$ for all $i$:
given such a vector $r=(r_1,r_2,\dots,r_n)$, we set
$$
\widetilde{\psi_0}(r):=\left\{\begin{array}{ll}
(r_2,r_3,\dots,r_{n-1},r_n,1)
& \text{ if } r_1=1\\
 (r_2,r_3,\dots,r_{n-1},r_n,r_1+1)
& \text{ if } r_1\lneqq 1\\
\end{array}\right..
$$
Now we can translate all the observations that we made in the
discussion after Example \ref{ex:reformulation} in terms of
$r$-vectors. For example, it is now clear that if $r(u)$ is the
$r$-vector of a parking sorted configuration $u$ on $K_{m,n}$,
then
$$
r(\psi_0(u))=\widetilde{\psi_0}^{h-1}(r(u)),
$$
where $h$ is the smallest integer $h\geq 2$ such that $r_h=1$, if
such an integer exists, otherwise $h:=n+1$. Moreover, what
$\widetilde{\psi_0}^{n}$ does to $r(u)$ is to increase by $1$ all
the $r_i\lneqq 1$ and leave the other $r_i$ fixed. The remark
about the eventual periodicity of the green path of the diagram of
$\psi_0$ can be translated into the following observation.

We can sort the set $\{r(u)\mid u\text{ is a stable sorted
configuration on }K_{m,n}\}$ of all possible $r$-vectors with the
\emph{reverse degree-lexicographic order}: given two distinct
vectors $r(u):=(r_1,r_2,\dots,r_n)$ and
$r(u'):=(r_1',r_2',\dots,r_n')$, we set $r(u)< r(u')$ if
$|r(u)|:=\sum_ir_i< \sum_ir_i'=:|r(u')|$, or if $|r(u)|=|r(u')|$
and for the biggest $i\in \{1,2,\dots,n\}$ such that the
difference $r_i'- r_i$ is non-zero we have $r_i'- r_i> 0$.

Then for a parking sorted configuration $u$,
$r(\widetilde{\psi}_0(u))\geq r(u)$, so in particular
$r(\psi_0(u))\geq r(u)$, and we have the equality $r(\psi_0(u))=
r(u)$ if and only if $r(u)=(1,1,\dots ,1)$.

We are in a position to give a formula for the rank of a parking
sorted configuration $u$ on $K_{m,n}$ in terms of $u_{a_m}$ and
the $r$-vector $r(u)$ (compare the following theorem with \cite[Theorem 12]{corileborgne}).
\begin{theorem} \label{thm:rank_formula}
Let $u$ be a parking sorted configuration on $K_{m,n}$, let $r(u)=(r_1,r_2,\dots,r_n)$ be its $r$-vector, and let
$u_{a_m}\geq 0$ be the value of $u$ on the sink $a_m$. Let
$$
u_{a_m}+1=nQ+R,\quad \text{ with }Q,R\in \mathbb{N},\,\, \text{
and }0\leq R\lneqq n.
$$
Then
\begin{equation}\label{eq:rank_formula}
\rank(u)+1=\sum_{i=1}^n\max\{0,Q+\chi(i\leq R)+r_i-1\},
\end{equation}
where $\chi(\mathcal{P})$ is $1$ if the proposition $\mathcal{P}$
is true, and $0$ otherwise.
\end{theorem}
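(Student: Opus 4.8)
The plan is to derive \eqref{eq:rank_formula} from the cylindric diagram of Section~\ref{sec:cylindric}, where it is proved that $\rank(u)+1$ equals the number of red numbers in the cylindric diagram of $u$. It therefore suffices to count the red numbers row by row and to recognize the $i$-th summand of the right-hand side as the number of red numbers recorded in row $i$.

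First I would count how many of the numbers $0,1,\dots,u_{a_m}$ are recorded in each row. By construction the number $k$ is placed in row $i$ (rows numbered $1,\dots,n$ from the bottom, as in the definition of the $r$-vector) exactly when $k\equiv i-1 \pmod n$. Writing $u_{a_m}+1=nQ+R$ with $0\le R<n$, an elementary count of the integers in $\{0,1,\dots,u_{a_m}\}$ congruent to $i-1$ modulo $n$ shows that row $i$ contains exactly $N_i:=Q+\chi(i\le R)$ numbers; I record them by their \emph{depth} $t=0,1,\dots,N_i-1$, the depth-$t$ number being $i-1+tn$.

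Next I would decide which of these are red. Let $g_i:=u_{b_i}+1$ and $c_i:=|\{j\neq m : u_{a_j}+1\le i-1\}|$ denote the horizontal positions of the green vertical step and of the cut in row $i$, so that $r_i=g_i-c_i$ by the definition of the $r$-vector. The geometric input, already made explicit in Section~\ref{sec:reformulation} and in the construction of the cylindric diagram, is that in the periodic diagram the green path advances one more cell eastward than the cut over each period (of $m+n$ against $m+n-1$ steps); hence, in a fixed row, each increment of the depth shifts the recorded cell one step east relative to the cut. Thus the depth-$t$ cell in row $i$ lies strictly to the right of the cut, i.e. its number is red, precisely when $g_i+t>c_i$, that is when $t\ge 1-r_i$. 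Since the relative position strictly increases with $t$, redness is monotone: the depth-$t$ number in row $i$ is green for $t<1-r_i$ and red for $t\ge 1-r_i$.

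Finally I would combine the two counts. As $u$ is parking, Proposition~\ref{prop:park_sort} gives $r_i\le 1$, so the threshold satisfies $1-r_i\ge 0$ and the number of red depths in row $i$ is
$$
\bigl|\{\, t : 1-r_i\le t\le N_i-1\,\}\bigr|=\max\{0,\,N_i-(1-r_i)\}=\max\{0,\,Q+\chi(i\le R)+r_i-1\}.
$$
Summing over $i=1,\dots,n$ and invoking $\rank(u)+1=\bigl|\{\text{red numbers}\}\bigr|$ yields \eqref{eq:rank_formula}. The delicate point will be the redness step: one must check that the green-versus-cut offset advances by exactly $+1$ per period with no wraparound ambiguity, so that redness really is monotone in the depth and the first red number in row $i$ occurs at depth exactly $1-r_i$; the boundary cases $N_i=0$ (forcing $Q=0$ and $i>R$) and $r_i=1$ (forcing all depths to be red) are then automatically absorbed by the outer $\max\{0,\cdot\}$.
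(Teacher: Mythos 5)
Your proof is correct and follows essentially the same route as the paper: the paper's own argument also reduces \eqref{eq:rank_formula} to counting red labels row by row in the cylindric diagram, with $Q+\chi(i\leq R)$ counting all labels in row $i$, the term $r_i-1$ discounting the green ones, and the $\max$ absorbing the all-green case. Your write-up merely makes explicit what the paper leaves terse — the per-row label count, the $+1$ eastward shift of the green step relative to the cut per period, and the resulting monotone redness threshold at depth $1-r_i$ — so it is a more detailed rendering of the same argument.
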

\begin{proof}
The proof of this theorem follows immediately from our analysis of
the cyclic diagram of $u$: we claim that for every
$i=1,2,\dots,n$, the $i$-th summand of the right hand side of
\eqref{eq:rank_formula} is precisely the number of red labels in
the $i$-th row of our cylindric diagram (the bottom row being row
$1$). Indeed, the $\max$ takes care of the fact that all the
labels of the row could be to the left of the red path, i.e. that
they are all green; the term $Q+\chi(i\leq R)$ takes care of the
fact that all the $u_{a_m}+1$ labels (both green and red) wrap
around the rows $Q$ times with a remainder of $R$; finally the
term $r_i-1$ is there to remove the
green labels from the counting.
\end{proof}
\begin{example}
Consider the parking sorted configuration
$u=\configuration{0,0,0,3,3,3;21}{0,0,0,3,3}$ on $K_{7,5}$. In
this case we have $r(u)=(1,-2-2,1,-2)$, $21+1=4\cdot 5 + 2$, hence
$Q=4$ and $R=2$, so that the summands of the right hand side of
\eqref{eq:rank_formula} are (in order) $5,2,1,4,1$, which are
precisely the number of red labels in row $1,2,3,4,5$ respectively
of the cyclic diagram of $u$, see Figure \ref{fig:new_algo_2}.
\end{example}

\section{An algorithm for the rank on $K_{m,n}$ of linear arithmetic complexity} \label{sec:optimization}

In this section we make explicit all the steps involved in our
original algorithm to compute the rank of a configuration on
$K_{m,n}$. Along the way, we will make an analysis of the
complexity of this algorithm.

In order to do this, in our complexity model we will make the
following two assumptions:
\begin{enumerate}
    \item the four elementary binary operations, i.e. addition, subtraction,
    product, and Euclidean division, they each cost $1$ (\emph{linear arithmetic complexity model});
    \item we can access the position of an array in constant time
    (this will be needed for sorting).
\end{enumerate}
We will use the usual $O$ notation to estimate the number of
operation of the algorithm: we say that we performed $O(n)$
operations if there exists a constant $C>0$ such that for every
$n\geq 1$ we actually performed $\leq C\cdot n$ operations.

In our algorithm we need to perform the following steps.

\underline{Step 1}: given any configuration
$u=\configuration{u_{a_1},u_{a_2},\dots,u_{a_{m-1}};u_{a_m}}{u_{b_1},u_{b_2},\dots,u_{b_n}}$
on $K_{m,n}$, compute a stable configuration
$u'=\configuration{u_{a_1}',u_{a_2}',\dots,u_{a_{m-1}'};u_{a_m}'}{u_{b_1}',u_{b_2}',\dots,u_{b_n}'}$
toppling-equivalent to $u$.

In order to do this, what we do in practice is to use Euclidean
division to compute first
$$
u_{b_i}:=q_i m + u_{b_i}',\quad \text{ with }q_i,u_{b_i}'\in
\mathbb{Z}\text{ and }0\leq u_{b_i}'\lneqq m\text{ for all
}i=1,2,\dots,n,
$$
and then we compute
$$
u_{a_i}+\sum_{j=1}^nq_i=\widetilde{q}_in+u_{a_i}',\quad \text{
with }\widetilde{q}_i,u_{a_i}'\in \mathbb{Z}\text{ and }0\leq
u_{a_i}'\lneqq n\text{ for all }i=1,2,\dots,m-1;
$$
finally we set
$$
u_{a_m}:=\degree(u)-\sum_{i=1}^nu_{b_i}'-\sum_{j=1}^{m-1}
u_{a_j}'.
$$

\begin{claim}
The configuration $u'$ is toppling equivalent to $u$.
\end{claim}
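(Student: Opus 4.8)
The plan is to read each of the two Euclidean divisions in Step 1 as a genuine sequence of topplings, and then to reconcile the configuration those topplings produce with the configuration $u'$ actually recorded by the algorithm through a single correction at the sink. First I would interpret the division $u_{b_i}=q_im+u_{b_i}'$ as toppling the vertex $b_i$ exactly $q_i$ times: since $b_i$ has degree $m$, each such toppling subtracts $\topplingconfiguration{b_i}$, lowering the value at $b_i$ by $m$ and raising the value at every $a_j$ (the sink $a_m$ included) by $1$. Carrying this out for all $i=1,\dots,n$ produces $u^{(1)}:=u-\sum_{i=1}^n q_i\,\topplingconfiguration{b_i}\topplingequiv u$, with $u^{(1)}_{b_i}=u_{b_i}'$ and $u^{(1)}_{a_j}=u_{a_j}+\sum_{k=1}^n q_k$ for every $j$. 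Similarly I would read the second division as toppling each $a_i$ (for $i=1,\dots,m-1$) exactly $\widetilde q_i$ times; as $a_i$ has degree $n$, this gives $u^{(2)}:=u^{(1)}-\sum_{i=1}^{m-1}\widetilde q_i\,\topplingconfiguration{a_i}\topplingequiv u$, with $u^{(2)}_{a_i}=u_{a_i}'$ for $i<m$.

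The point that needs care, and the only delicate step, is that these $a$-topplings dump chips back onto the $b$-vertices, so that $u^{(2)}$ is \emph{not} literally the output $u'$. Writing $S:=\sum_{i=1}^{m-1}\widetilde q_i$, one finds $u^{(2)}_{b_j}=u_{b_j}'+S$ for every $j$ and $u^{(2)}_{a_m}=u_{a_m}+\sum_k q_k$, whereas the algorithm records the bare remainders $u_{b_j}'$ and fixes the sink only through the degree constraint. The heart of the argument is to observe that this discrepancy is exactly one integer multiple of the sink's toppling vector. Comparing coordinatewise, $u^{(2)}$ and $u'$ agree on $a_1,\dots,a_{m-1}$; they differ by $+S$ on each $b_j$, and by $-nS$ on $a_m$, the latter being forced by the fact that toppling preserves $\degree$ together with the fact that $u'_{a_m}$ is defined precisely so that $\degree(u')=\degree(u)=\degree(u^{(2)})$. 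Since $\topplingconfiguration{a_m}$ takes the value $n$ at $a_m$, the value $-1$ at each $b_j$, and $0$ at each $a_i$ with $i<m$, this says exactly that $u^{(2)}-u'=-S\,\topplingconfiguration{a_m}$, i.e. $u'=u^{(2)}+S\,\topplingconfiguration{a_m}$.

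It then remains only to note that this correction is a legitimate toppling move. Using Remark \ref{rem:Delta} to rewrite $S\,\topplingconfiguration{a_m}=-S\sum_{c_h\neq a_m}\topplingconfiguration{c_h}$, we see that $u'$ is obtained from $u^{(2)}$ by toppling every non-sink vertex $S$ times (or untoppling them if $S<0$, which is permitted because $\topplingequiv$ is an equivalence relation). Hence $u'\topplingequiv u^{(2)}\topplingequiv u$, which is the claim. The only conceptual subtlety throughout is to keep track of the second round of chips deposited on the $b$-vertices and to recognize that, once the value at the sink is pinned down by degree preservation, the leftover difference collapses to a single multiple of $\topplingconfiguration{a_m}$.
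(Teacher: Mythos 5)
Your proof is correct and is essentially the paper's own argument: your final identity $u'=u^{(2)}+S\,\topplingconfiguration{a_m}=u-\sum_{i=1}^n q_i\topplingconfiguration{b_i}-\sum_{j=1}^{m-1}\widetilde{q}_j\topplingconfiguration{a_j}+\bigl(\sum_{j=1}^{m-1}\widetilde{q}_j\bigr)\topplingconfiguration{a_m}$ is exactly the paper's one-line identity $u'=u-\sum_{i=1}^n q_i\topplingconfiguration{b_i}-\sum_{j=1}^{m-1}\widetilde{q}_j\bigl(\topplingconfiguration{a_j}-\topplingconfiguration{a_m}\bigr)$ with the sink terms regrouped. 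The paper merely asserts this identity is straightforward to check, while you derive it in stages (topple the $b_i$'s, topple the $a_j$'s, then pin down the sink value via degree preservation), which is a fuller write-up of the same verification.
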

\begin{proof}
It is straightforward to check that
$$
u'=u-\sum_{i=1}^nq_i\Delta^{(b_i)}-\sum_{j=1}^{m-1}\widetilde{q_j}(\Delta^{(a_j)}-\Delta^{(a_m)}).
$$
\end{proof}
Notice that to compute $u'$ we used $n+(m-1)$ Euclidean divisions
and two sums with $\leq m+n$ terms each. Hence, using our
assumption (1) on the complexity model, we performed a total of
$O(m+n)$ operations.

\underline{Step 2}: given a stable configuration $u$ (with respect
to the sink $a_m$), we sort it, i.e. we compute $\sort(u)$. To do
this, we use the so called \emph{counting sort} (cf. \cite[Section 8.2]{cormen}), which is an algorithm that takes an integer $m$ and a list
$w=(w_1,w_2,\dots,w_n)$ of $n$ integers $w_i$ with $0\leq w_i\leq
m$, and it returns a list $out$ with the $w_i$ in increasing
order. This algorithm uses an array of lists: we give a pseudocode for completeness.

\lstset{language=Python}
\begin{lstlisting}[frame=single,texcl,mathescape]
def counting_sort($m$,$w$): # the input $w=[w_1,w_2,\dots,w_n]$ is a list of integers $0\leq w_i\leq m$
  for $i$ in $[0,1,2,\dots,m]$:
    $\ell_i$ = $[\,\,\, ]$  # each $\ell_i$ is initialized as an empty list
  $\ell$ = $<\ell_0,\ell_1,\ell_2,\dots,\ell_{m}>$ # an array of $m+1$ empty lists
  for $i$ in $[1,2,\dots,n]$:
    $\ell_{w_i}$.append($w_i$)  # we append $w_i$ to $\ell_{w_i}$
  $out$ = $[\,\,\, ]$  #  $out$ is initialized as an empty list
  for $j$ in $[0,1,2,\dots,m]$: # we join the lists $\ell_i$ in the given order
    $out$ = $out$ joined to $\ell_j$
  return $out$
\end{lstlisting}

This algorithm, using our assumption (2) on the complexity model,
performs $O(m+n)$ operations.

Now we can use this algorithm to order first the $u_{a_i}$ where
$0\leq u_{a_i}\leq n-1$ for $i=1,2,\dots,m-1$, and then the
$u_{b_j}$ where $0\leq u_{b_j}\leq m-1$ for $j=1,2,\dots,n$.
Therefore, to compute $\sort(u)$, we perform in total $O(m+n)$
operations.

\underline{Step 3}: given a stable sorted configuration
$u=\configuration{u_{a_1},u_{a_2},\dots,u_{a_{m-1}};u_{a_m}}{u_{b_1},u_{b_2},\dots,u_{b_n}}$
on $K_{m,n}$, we compute $u''=\sort(\park(u))$.

Recall the definition of the $r$-vector $r(u)=(r_1,r_2,\dots,r_n)$
of a stable sorted configuration $u$: for all $i=1,2,\dots,n$, we
set
$$
r_i:=u_{b_i}+1-|\{u_{a_j}\mid j \neq m, u_{a_j}+1\leq i-1\}|.
$$
By Proposition \ref{prop:park_sort}, we know that $u$ is parking
if and only if $r_i\leq 1$ for all $i=1,2,\dots,n$. In particular
we must have $r_1=1$.

Now let $h$ be the minimal integer $i$ such that $1\leq i\leq n$
and $r_i=\max\{r_j\mid j=1,2,\dots,n\}$, and let
$k:=u_{b_h}-r_h+2=1+|\{u_{a_j} \mid j \neq m, u_{a_j}+1\leq
h-1\}|$.
\begin{remark}
For $h=1$ we have $k=1$, while for $h\geq 2$, we have also
$\max\{r_j\mid j=1,2,\dots,n\}\geq 2$ (since $r_1\geq 1$), hence
$k\leq 1+(m-2)=m-1$.
\end{remark}
Now we set
\begin{equation} \label{eq:uprime}
u':=u+(r_h-2)\Delta^{(a_m)}-\sum_{s=k}^{m-1}\Delta^{(a_s)}-\sum_{t=h}^n\Delta^{(b_t)},
\end{equation}
and
$$
u'':=\sort(u').
$$

We claim that $u'=\park(u)$, so that $u''=\sort(u')=\sort(\park(u))$.

In fact, before proving the claim, we can give explicit formulae for the values of $u'$ and $u''$ on the vertices.

It is straightforward to check that, given
$u'=\configuration{u_{a_1}',u_{a_2}',\dots,u_{a_{m-1}'};u_{a_m}'}{u_{b_1}',u_{b_2}',\dots,u_{b_n}'}$,
\begin{align} \label{eq:optim}
u_{b_i}' & =u_{b_i}-u_{b_h}+\chi(i\leq h-1)\cdot m,\quad \text{
for }i=1,2,\dots,n,\\
\notag u_{a_j}'& =u_{a_j}-(h-1)+\chi(j\leq k-1)\cdot n,\quad
\text{ for }j=1,2,\dots,m-1,
\end{align}
and
$$
u_{a_m}':=\degree(u)-\sum_{i=1}^nu_{b_i}'-\sum_{j=1}^{m-1}u_{a_j}'.
$$
Moreover, it can be shown that 
\begin{equation} \label{eq:usecond}
u''=\configuration{u_{a_{k}}',u_{a_{k+1}}',\dots ,
u_{a_{m-1}}',u_{a_1}',u_{a_2}',\dots,u_{a_{k-1}}';u_{a_m}'}{u_{b_h}',
u_{b_{h+1}}',\dots,u_{b_n}',u_{b_1}',u_{b_2}',\dots,u_{b_{h-1}}'}.
\end{equation}

To see where the formula \eqref{eq:uprime} comes from, we should
look at our periodic diagram used to understand the action of
$\varphi$, $\psi$, $\Ta$ and $\Tb$. Given the diagram of a stable
sorted configuration $u$, the diagram of $u+\Delta^{(a_m)}$ has
the same red path, while the green path is shifted by $1$ step
towards west (notice that in this situation we may get some $-1$ on the vertices $b_i$). On the other hand, the diagram of
$\sort\left(u-\sum_{s=k}^{m-1}\Delta^{(a_s)}-\sum_{t=h}^n\Delta^{(b_t)}\right)$
is the diagram that we see in the $m\times n$ grid after moving it
on the periodic diagram of $u$ $m-k$ steps west and $n-h+1$ steps
south: this comes from the graphic interpretation of the operators
$\Ta^{-1}$ and $\Tb^{-1}$ (cf. the comments after Lemma \ref{lem:Ta_Tb}).

Now, by definition of $h$ and $r_h$, in $u+(r_h-2)\Delta^{(a_m)}$
the $h$-th row is the lowest row in which the
intersection area has precisely two squares, while in all the
other rows the intersection area has at most $2$ squares.
Moreover, by definition of $k$, the leftmost box in the
intersection area of the $h$-th row has coordinates $(k,h)$. So we have to bring the northeast corner of our $m\times n$ grid on
the southeast corner of the $(k,h)$ cell. In order to do this, we
need to move our $m\times n$ grid $m-k$ steps west and $n-h+1$
steps south.

In this way, by the periodicity of our diagram, we must have
reached a parking sorted configuration. This explains our formula \eqref{eq:uprime}
for $u'$ and proves the claim that $u'=\park(u)$, so that $u''=\sort(u')=\sort(\park(u))$. 

Let us look at an example.
\begin{example}
Consider the stable sorted configuration
$u=\configuration{0,1,2,3,3,3;*}{2,4,4,6,6}$ on $K_{7,5}$, cf.
Figure \ref{fig:optim_1_2}.

\begin{figure}[h]
\includegraphics[width=60mm,clip=true,trim=15mm 200mm 90mm 10mm]{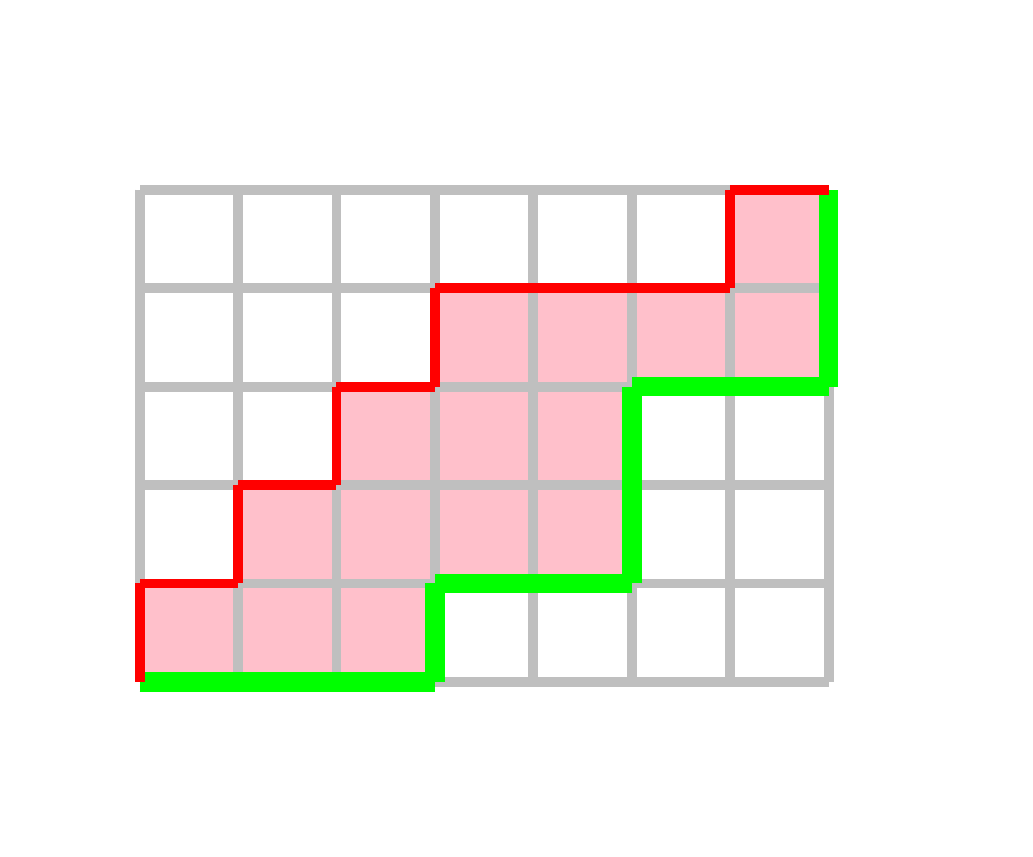}
\includegraphics[width=60mm,clip=true,trim=15mm 200mm 90mm 10mm]{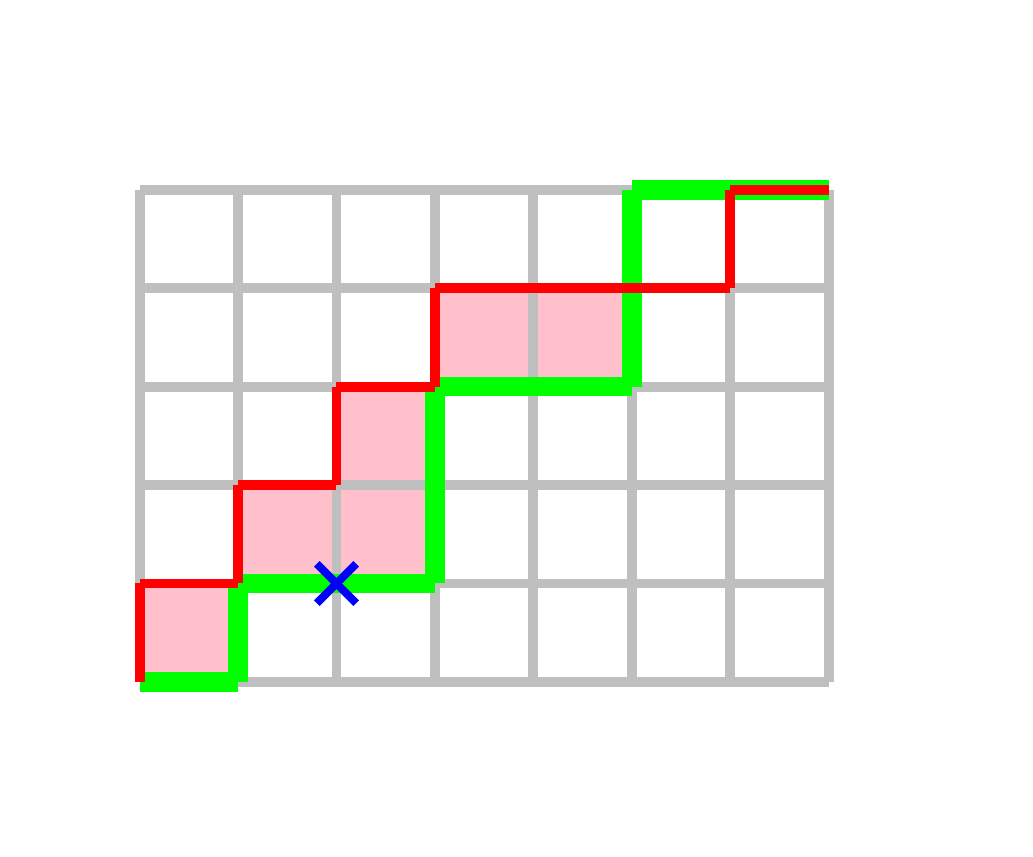}
\caption[ciccia]{On the left, the stable sorted configuration
$u=\configuration{0,1,2,3,3,3;*}{2,4,4,6,6}$. On the right, the
configuration
$u+2\Delta^{(a_7)}=\configuration{0,1,2,3,3,3;*}{0,2,2,4,4}$.}
\label{fig:optim_1_2}
\end{figure}

\begin{figure}[h]
\includegraphics[width=70mm,clip=true,trim=15mm 160mm 50mm 10mm]{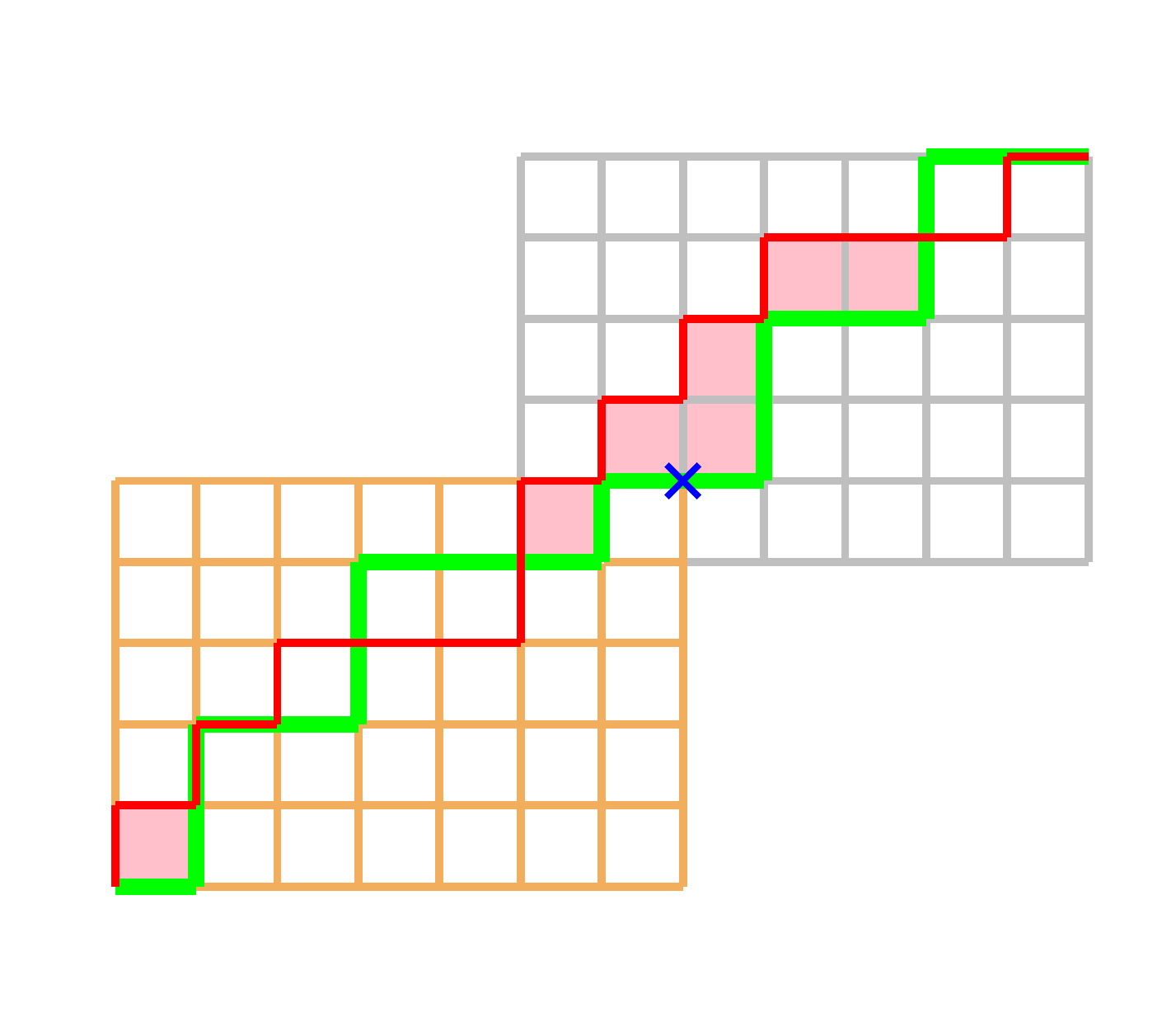}
\caption[ciccia]{This is the diagram of the construction of
$u''=\configuration{0,1,2,2,2,4;*}{0,0,2,2,5}$ from
$u+2\Delta^{(a_7)}$.} \label{fig:optim_3}
\end{figure}

In this case $h=2$, $r_h=4$, and $k=2$. So $r_h-2=2$, and we have
\begin{align*}
u' &
=u+2\Delta^{(a_7)}-\sum_{s=2}^{6}\Delta^{(a_s)}-\sum_{t=2}^5\Delta^{(b_t)}\\
 & = \configuration{4,0,1,2,2,2;*}{5,0,0,2,2},
\end{align*}
and
$$
u''= \configuration{0,1,2,2,2,4;*}{0,0,2,2,5}.
$$
See Figure \ref{fig:optim_3}.

\end{example}

Now that we have a formula for computing $\sort(\park(u))$, we
want to count how many operations it requires.

It is clear from the definition of the $r$-vector that to compute
the $r$-vector of a stable sorted configuration $u$ on $K_{m,n}$
we can use the following algorithm written in pseudocode.

\lstset{language=Python}
\begin{lstlisting}[frame=single,texcl,mathescape]
def compute_r_vector($u$): # the input $u=\configuration{u_{a_1},u_{a_2},\dots,u_{a_{m-1}};u_{a_m}}{u_{b_1},u_{b_2},\dots,u_{b_n}}$ is stable and sorted
  $out$ = $[\,\,\, ]$  # $out$ is initialized as an empty list
  $h$ = $1$
  for $i$ in $[1,2,\dots,n]$:
    while $u_{a_h}+1\leq i-1$ and $h\leq m-1$:
      $h$ = $h+1$
    $out$ = $out$.append($u_{b_i}+1-(h-1)$)
  return $out$
\end{lstlisting}

Clearly this algorithm costs $m+n$ operations.

Moreover, we need $n$ operations to find $h$, and $m$ operations
to find $k$. Finally, we need $2(n+m)$ operations to compute the
$u_{b_i}'$ and the $u_{a_j}'$. So for computing $u''=\sort(u')$, thanks to the Step 3, or simply using formula \eqref{eq:usecond}, in total we performed again
$O(m+n)$ operations.

\underline{Step 4}: given a parking sorted
configuration $u$, we compute $\rank(u)$. We can do this using the
formula of Theorem \ref{thm:rank_formula}: we first compute
$$
u_{a_m}+1=nQ+R,\quad \text{ with }Q,R\in \mathbb{N},\,\, \text{ and
}0\leq R\lneqq n,
$$
and then
$$
\rank(u)=-1+\sum_{i=1}^n\max\{0,Q+\chi(i\leq R)+r_i-1\}.
$$
All this clearly requires $O(m+n)$ operations as well.

All the discussion of this section shows that our algorithm to
compute the rank of a configuration $u$ on $K_{m,n}$ performs
$O(m+n)$ operations under our assumptions on the complexity model.


\section{Two new statistics on parking sorted configurations on $K_{m,n}$}

We consider the generating function of parking sorted configurations on $K_{m,n}$ according to degree and rank:

$$ \widetilde{K}_{m,n}(d,r) := \sum_{u\text{ parking sorted on }K_{m,n}} d^{\degree(u)}r^{\rank(u)}.$$

This formal series is not a power series since the degree takes both arbitrary positive and negative values.
It encodes the distribution of the bistatistic $(\degree,\rank)$. In Figure~\ref{fig:degree_rank_on_K_5_3} we displayed in a table the coefficients of $d^{\degree(u)}r^{\rank(u)}$ for the configurations $u$ on $K_{5,3}$ with degree between $-3$ and $17$.
To get the full table we should add an already started half diagonal of entries equal to $105$ in the top right corner and an already started horizontal half line of entries equal to $105$ in the bottom left corner.

\begin{figure}[ht!]
\begin{tikzpicture}[scale=0.5]
\draw[black!30!white] (-3,-1) grid (18,10);
\foreach \x/\y/\z in {10/4/3,3/1/1,9/1/57,-3/-1/105,8/0/35,11/3/89,12/5/8,7/-1/15,6/2/1,5/1/9,1/-1/102,4/-1/75,7/2/6,4/0/27,11/5/1,8/1/49,10/3/27,0/0/1,3/0/15,-2/-1/105,4/1/3,5/-1/57,11/4/15,5/0/39,7/1/36,-1/-1/105,2/-1/97,6/0/49,14/6/104,13/6/3,8/3/1,15/7/105,1/0/3,8/2/20,9/3/9,7/0/48,17/9/105,9/2/39,6/1/20,16/8/105,14/7/1,13/5/102,3/-1/89,6/-1/35,2/0/8,12/4/97,0/-1/104,10/2/75}{
\draw node at (\x+.5,\y+0.5) {\tiny $\z$};
}
\foreach \degree in {-3,-2,...,17}{
\draw node at (\degree+0.5,-1.5) {\tiny $d^{\degree}$};
}
\foreach \rank in {-1,0,...,9}{
\draw node at (-3.5,\rank+0.5) {\tiny $r^{\rank}$};
}
\end{tikzpicture}
\caption{Partial table of coefficients of $\widetilde{K}_{5,3}(d,r)$.\label{fig:degree_rank_on_K_5_3}}
\end{figure}

There seems to be a symmetry on the distribution of the
coefficients of $\widetilde{K}_{5,3}(d,r)$. For example we can
read the values $15,35,57,75,89,97,102,104,105, 105, 105,\dots$ in
the first row towards west but also in the rightmost diagonal
towards northeast. We observe similar identities on the next rows and
diagonals. These observations and some additional inspections
suggest the following change of parameters: we set

$$\left\{\begin{array}{lcl}
\xpara(u) & = & (m-1)(n-1)+\rank(u)-\degree(u)\\
\ypara(u) & = & \rank(u)+1\\
\end{array}\right.$$
This leads to consider the formal power series
$$ K_{m,n}(x,y) := \sum_{u} x^{\xpara(u)}y^{\ypara(u)} $$
where $u$ still runs on parking sorted configurations of $K_{m,n}$. Observe that
\begin{equation} \label{eq:x_y_from_d_r}
K_{m,n}(x,y)=x^{(m-1)(n-1)}y\widetilde{K}_{m,n}(x^{-1},xy)\,\, \text{ and }\,\, \widetilde{K}_{m,n}(d,r)=d^{(m-1)(n-1)}r^{-1}K_{m,n}(d^{-1},dr),
\end{equation}
hence the study of $\widetilde{K}_{m,n}(d,r)$ is equivalent to the study of $K_{m,n}(x,y)$.

We displayed a partial table of the coefficients of $K_{5,3}(x,y)$ in Figure~\ref{fig:xpara_ypara_on_K_5_3}. Notice that now this formal power series seems to be symmetric in $x$ and $y$.

\begin{figure}[ht!]
\begin{tikzpicture}[scale=0.5]
\draw[black!30!white] (0,0) grid (11,11);
\foreach \x/\y/\z in {1/3/39,3/0/75,8/0/105,2/1/49,0/0/15,1/6/8,0/10/105,5/1/15,2/5/3,0/3/75,4/0/89,1/2/49,9/0/105,3/3/6,0/6/102,8/1/1,1/5/15,5/0/97,0/4/89,10/0/105,4/1/27,1/1/48,3/2/20,2/6/1,7/1/3,2/2/36,6/0/102,1/4/27,2/3/20,0/7/104,4/2/9,1/0/35,0/8/105,0/1/35,7/0/104,3/4/1,6/1/8,3/1/39,2/4/9,2/0/57,1/8/1,6/2/1,4/3/1,1/7/3,0/9/105,0/5/97,5/2/3,0/2/57}{
\draw node at (\x+0.5,\y+0.5) {\tiny $\z$};
}
\foreach \x in {0,1,...,10}{
\draw node at (\x+0.5,-0.5) {\tiny $x^{\x}$};
}
\foreach \y in {0,1,...,10}{
\draw node at (-0.5,\y+0.5) {\tiny $y^{\y}$};
}
\draw[line width=2,opacity=0.2] (0,0) -- (11,11);
\end{tikzpicture}
\caption{Partial table of coefficients of $K_{5,3}(x,y)$.\label{fig:xpara_ypara_on_K_5_3}}
\end{figure}

In this section we study the formal power series $K_{m,n}(x,y)$. In particular we want to understand the apparent symmetry in $x$ and $y$, and we will eventually provide a formula for the generating function of the $K_{m,n}(x,y)$ for $m,n\geq 1$.

The key observation is that the parameters $\xpara$ and $\ypara$ have a natural interpretation in the following extension of our preceding algorithm for computing the rank. We need some more notation.

Given a configuration $u$ on $K_{m,n}$, we denote by $u[*]$ a \emph{partial configuration} undefined on the sink $a_m$. Similarly, we denote by $u[s]$ the configuration on $K_{m,n}$ whose values outside the sink $a_m$ are the ones of $u$, while it takes the value $s$ on the sink $a_m$.
We naturally define the $r$-vector $r(u):=r(u[0]) = (r_1,r_2,\ldots, r_n)$ of the partial configuration $u[*]$: indeed notice that $r(u[s])=r(u[0])$ for all $s\in \mathbb{Z}$ as clearly the $r$-vector does not depend on the value at the sink.

Consider the bi-infinite strip of unit square cells in the plane for which the bottom left corner has coordinates $\{(i,j)| i\in \mathbb{Z},j\in \{0,1,\dots,n-1\}\}$. This is where we draw the cylindric diagram of $u[s]$ for $s\geq 0$ (cf. Section \ref{sec:cylindric}).
%
%
The labeling of the cells of the cylindric diagram that we used to compute the rank can be naturally extended to the negative values $s$ on the sink $a_m$: the cell $L(s)$ labelled by $s$ will have the bottom left corner of coordinates $(u_{b_{t+1}}+q,t)$ where $s=qn+t$ with $q,t\in \mathbb{Z}$ and $0\leq t<n$ is given by euclidean division.
We define the set $\visited{u[s]}$ of \emph{visited cells} by the algorithm for the parking sorted configuration $u[s]$ as all the cells labelled by a number less than or equal to $s$.
We recall that the red path of the diagram of $u$ disconnects the strip of the cylindric diagram into the left component and the right component.

We define $\xpara(u)$, respectively $\ypara(u)$, as the number of unvisited left cells, respectively of visited right cells.

The following lemma proves the coherence of the two given definitions of $\xpara$ and $\ypara$.

\begin{lemma}\label{lem:xy_to_degrank}
For any parking sorted configuration $u$ on $K_{m,n}$ we have

$$\xpara(u) = (m-1)(n-1) +\rank(u)-\degree(u)$$
and
$$\ypara(u) = \rank(u)+1$$
\end{lemma}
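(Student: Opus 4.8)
The plan is to carry out an explicit cell-by-cell bookkeeping in the cylindric diagram, treating the bi-infinite strip one row at a time. First I would pin down the geometry of row $i$ (for $i=1,\dots,n$, the bottom row being $i=1$): by the labelling rule the cell carrying the label $qn+(i-1)$ has its bottom-left corner at horizontal position $u_{b_i}+q$, so as $q$ ranges over $\mathbb{Z}$ these cells exhaust the row. In this row the green vertical step sits at position $u_{b_i}+1$ and the red vertical step at position $\rho_i:=|\{j\neq m\mid u_{a_j}\le i-2\}|$, so that $r_i=(u_{b_i}+1)-\rho_i$. Translating the three relevant predicates into inequalities on $q$, the cell lies to the right of the red path exactly when $q\ge 1-r_i$, to its left when $q\le -r_i$, and it is visited (its label is $\le u_{a_m}$) exactly when $q\le Q+\chi(i\le R)-1$, the last equivalence coming from $u_{a_m}+1=nQ+R$ and an Euclidean division. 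Since each count is over a bounded interval of $q$, finiteness of $\xpara(u)$ and $\ypara(u)$ is automatic.

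Setting $A_i:=Q+\chi(i\le R)+r_i-1$, the two intervals then give, in row $i$, exactly $\max\{0,A_i\}$ visited right cells and $\max\{0,-A_i\}$ unvisited left cells. Summing the former over $i$ reproduces the right-hand side of Theorem \ref{thm:rank_formula}, so $\ypara(u)=\sum_i\max\{0,A_i\}=\rank(u)+1$, which is the second claimed identity. For the first identity I would exploit the elementary relation $\max\{0,-A\}=\max\{0,A\}-A$, valid for every real $A$, to write
\[
\xpara(u)=\sum_{i=1}^n\max\{0,-A_i\}=\Big(\sum_{i=1}^n\max\{0,A_i\}\Big)-\sum_{i=1}^nA_i=(\rank(u)+1)-\sum_{i=1}^nA_i,
\]
so that everything reduces to evaluating $\sum_i A_i$.

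For that final step I would compute $\sum_{i=1}^n A_i=nQ+R+\sum_i r_i-n=(u_{a_m}+1)+\sum_i r_i-n$, and expand $\sum_i r_i=\sum_i(u_{b_i}+1)-\sum_i\rho_i$. The double sum $\sum_i\rho_i$ I would re-index as $\sum_{j\neq m}|\{i\mid u_{a_j}+2\le i\le n\}|=\sum_{j=1}^{m-1}(n-1-u_{a_j})=(m-1)(n-1)-\sum_{j=1}^{m-1}u_{a_j}$, using $0\le u_{a_j}\le n-1$. Recognising $\sum_{j=1}^{m-1}u_{a_j}+\sum_{i=1}^n u_{b_i}=\degree(u)-u_{a_m}$ then collapses everything to $\sum_i A_i=\degree(u)+1-(m-1)(n-1)$, whence $\xpara(u)=(m-1)(n-1)+\rank(u)-\degree(u)$, as wanted. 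I expect the only genuinely delicate part to be the three boundary conventions of the first paragraph — which side of a vertical red or green edge each adjacent cell is assigned to, and the off-by-one in the visited threshold — since a single misaligned inequality would shift every row count; once these are fixed consistently (and one checks that $r_i\le 1$ for parking configurations forces all visited right cells to carry non-negative labels, matching Section \ref{sec:cylindric}), the remainder is the routine algebra above.
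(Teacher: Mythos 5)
Your proof is correct, and the bookkeeping checks out against the paper's conventions: the cell labelled $qn+(i-1)$ does sit at abscissa $u_{b_i}+q$, the red north step of row $i$ is at abscissa $u_{b_i}+1-r_i$, so the visited-right and unvisited-left counts in row $i$ are exactly $\max\{0,A_i\}$ and $\max\{0,-A_i\}$ with $A_i=Q+\chi(i\leq R)+r_i-1$, and the summation $\sum_i A_i=\degree(u)+1-(m-1)(n-1)$ is right. However, your route differs from the paper's in the $\xpara$ half. For $\ypara(u)=\rank(u)+1$ the paper just invokes the cylindric-diagram description of the rank computation from Section \ref{sec:cylindric} (rank $=-1+$ number of red labels), which is in substance your per-row count, so there the two arguments coincide. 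For $\xpara$, the paper argues globally rather than locally: it uses Lemma \ref{lem:inc_sink} to show that $Q(u)=\degree(u[s])+\xpara(u[s])-\ypara(u[s])$ is independent of $s$, and then evaluates $Q(u)$ at the single convenient value $s=0$ by a bijective partition of the $(m-1)(n-1)$ cells of a rectangle into cells crossed by green arrows, cells crossed by red arrows, and unvisited left cells. You instead convert the unvisited-left count into the visited-right count minus $\sum_i A_i$ via $\max\{0,-A\}=\max\{0,A\}-A$, and evaluate $\sum_i A_i$ by re-indexing the double sum defining the $r$-vector. The paper's approach buys conceptual economy (Lemma \ref{lem:inc_sink} is needed anyway for the geometric sums in $F_u(x,y)$, and no Euclidean-division algebra appears); yours buys explicit per-row formulas for all four classes of cells, hence finiteness for free, and a derivation that bypasses the picture entirely once Theorem \ref{thm:rank_formula} is granted. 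One small point to patch: Theorem \ref{thm:rank_formula} is stated only for $u_{a_m}\geq 0$, while the lemma covers any sink value; for $u_{a_m}<0$ add the one-line remark that parking forces $r_i\leq 1$, hence all $A_i\leq 0$, so $\sum_i\max\{0,A_i\}=0=\rank(u)+1$ and your computation of $\xpara$ still goes through unchanged.
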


The proof of this lemma relies on the following one, which describes the effect of the increment by one of the value on the sink by either a decrement of $\xpara$ or by an increment of $\ypara$.

\begin{lemma}\label{lem:inc_sink}
For any parking sorted configuration $u[s]$ on $K_{m,n}$ we have
$$ \xpara(u[s+1]) = \xpara(u[s])-\chi(\mbox{$L(s+1)$ is at left})$$
and
$$\ypara(u[s+1]) = \ypara(u[s])+\chi(\mbox{$L(s+1)$ is at right}).$$
\end{lemma}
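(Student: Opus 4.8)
The plan is to observe that incrementing the value on the sink from $s$ to $s+1$ changes the set of visited cells in the simplest possible way, namely by adjoining the single cell $L(s+1)$, and then to carry out the elementary bookkeeping separately on the left and the right component of the cut.

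First I would record that the labelling of the strip is a bijection onto $\mathbb{Z}$: by Euclidean division the integer $s=qn+t$ (with $0\leq t<n$) labels precisely the cell $L(s)$ whose bottom left corner is $(u_{b_{t+1}}+q,t)$, and as $(q,t)$ ranges over $\mathbb{Z}\times\{0,1,\dots,n-1\}$ this hits every cell of the strip exactly once (the cell in column $c$ and row $t$ being hit by $q=c-u_{b_{t+1}}$). Consequently $\visited{u[s]}$, the set of cells with label $\leq s$, satisfies $\visited{u[s+1]}=\visited{u[s]}\sqcup\{L(s+1)\}$, since $L(s+1)$ is the unique cell carrying the label $s+1$.

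Next, the partition of the strip into left and right components depends only on the red path, hence is the same for $u[s]$ and $u[s+1]$, and every cell lies in exactly one of the two components, so $\chi(L(s+1)\text{ is at left})+\chi(L(s+1)\text{ is at right})=1$. Because passing from $s$ to $s+1$ visits exactly the one cell $L(s+1)$, the set of unvisited left cells either loses precisely $L(s+1)$ (when it is a left cell) or is unchanged (when it is a right cell), which gives $\xpara(u[s+1])=\xpara(u[s])-\chi(L(s+1)\text{ is at left})$; symmetrically the set of visited right cells either gains precisely $L(s+1)$ or is unchanged, which gives $\ypara(u[s+1])=\ypara(u[s])+\chi(L(s+1)\text{ is at right})$.

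The only point that requires care, and which I would treat as the main obstacle, is that $\xpara(u[s])$ and $\ypara(u[s])$ are finite, so that these increments are meaningful. For this I would use that within a fixed row $t$ the label $(c-u_{b_{t+1}})n+t$ is strictly increasing in the column $c$, while the monotone cut splits row $t$ at a single column threshold $\theta_t$, the cells with $c\geq\theta_t$ lying in the right component and those with $c<\theta_t$ in the left one. Hence in each of the $n$ rows the visited right cells are those with $\theta_t\leq c$ and label $\leq s$, a finite range of columns, and the unvisited left cells are those with $c<\theta_t$ and label $>s$, again a finite range; summing over the finitely many rows shows that both $\xpara(u[s])$ and $\ypara(u[s])$ are finite, which completes the argument.
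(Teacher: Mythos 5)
Your proof is correct and follows essentially the same route as the paper's: the visited set grows by exactly the single cell $L(s+1)$, which lies in exactly one of the two components of the cut, so precisely one of the two statistics changes by one. The additional points you verify (that the labelling is a bijection onto the cells of the strip, and that $\xpara(u[s])$ and $\ypara(u[s])$ are finite) are sound and merely make explicit what the paper's shorter proof takes for granted.
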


\begin{proof}[Proof of Lemma~\ref{lem:inc_sink}.]
The difference between the visited cells for $u[s]$ and $u[s+1]$ is the extra cell $L(s+1)$ labelled by $s+1$, which is added to $\visited{u[s]}$ to obtain $\visited{u[s+1]}$:
$$ \visited{u[s+1]} = \visited{u[s]}\cup\{L(s+1)\}.$$
Then $L(s+1)$ is either in the left component or in the right component of the cylindric diagram, which are determined by the cut of the red path of $u$.

In the first case, an unvisited left cell in $u[s]$ is now visited in $u[s+1]$ hence $\xpara(u[s+1]) = \xpara(u[s])-1$.
In the second case, the extra visited cell in $u[s+1]$ add one visited right cell hence $\ypara(u[s+1]) = \ypara(u[s])+1$.
\end{proof}

\begin{proof}[Proof of Lemma~\ref{lem:xy_to_degrank}.]
The relation $\ypara(u)=\rank(u)+1$ is clear from our description of the computation of the rank in terms of the cylindric diagram.

To understand $\xpara(u)$ is a bit more involved.
First we observe that for any partial configuration $u$ the quantity
$$ Q(u) := \degree(u[s])+\xpara(u[s])-\ypara(u[s]) $$
is well defined, i.e. it does not depend on $s$.
Indeed, according to Lemma~\ref{lem:inc_sink}, when $s$ is incremented by one, $\degree(u[s])$ is incremented by one, and exactly one of $\xpara(u[s])$ and $-\ypara(u[s])$ is decremented by one.

Hence to prove the relation $\xpara(u)=(m-1)(n-1)+\rank(u)-\degree(u)$ it is enough to show that $Q(u) = (m-1)(n-1)-1$. In order to do this, we will compute $Q(u)= \degree(u[0])+\xpara(u[0])-\ypara(u[0]) $.

The general argument is better understood by looking at an example: we consider the parking sorted configuration $u[0]=\configuration{1,1,2,2,2,4;0}{0,0,2,2,5}$ and draw its diagram, see Figure~\ref{fig:xpara-proof} on the left.

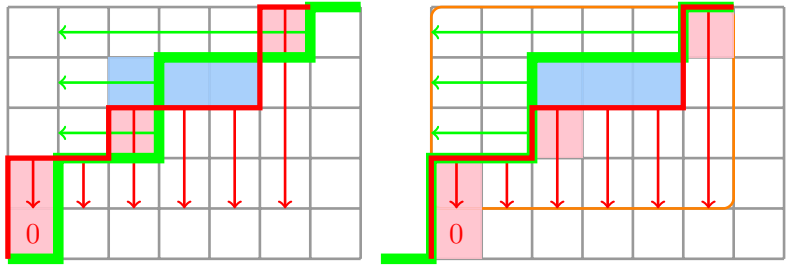
\begin{figure}[ht!]
\begin{tikzpicture}[scale=0.67]
\draw[draw=black!40!white,line width=1] (0,0) grid (7,5);
\foreach \x/\y/\acolor in {0/0/mypink,0/1/mypink,2/2/mypink,5/4/mypink,3/3/myblue,4/3/myblue,2/3/myblue}{
\fill[\acolor,opacity=0.9] (\x,\y) rectangle (\x+1,\y+1);
}
\foreach \row/\bvalue in {2/2,3/2,4/5}{
\draw[green,->,line width=1] (\bvalue+1,\row+0.5) -> (1,\row+0.5);
}
\foreach \column/\avalue in {0/1,1/1,2/2,3/2,4/2,5/4}{
\draw[red,->,line width=1] (\column+0.5,\avalue+1) -> (\column+0.5,1);
}
\draw[draw=green,line width=4] (0,0) -- (1,0) -- (1,2) -- (3,2) -- (3,4) -- (6,4) -- (6,5) -- (7,5);
\draw[draw=red,line width=2] (0,0) -- (0,2) -- (1,2) -- (2,2) -- (2,3) -- (5,3) -- (5,5) -- (6,5);
\foreach \x/\y/\label in {0/0/0}{
\draw node at (\x+0.5,\y+0.5) {$\textcolor{red}{\label}$};
}
\end{tikzpicture}
\begin{tikzpicture}[scale=0.67]
\draw[draw=black!40!white,line width=1] (0,0) grid (7,5);
\draw[draw=orange,rounded corners,line width=1] (0,1) rectangle (6,5);
\foreach \x/\y/\acolor in {0/0/mypink,0/1/mypink,2/2/mypink,5/4/mypink,3/3/myblue,4/3/myblue,2/3/myblue}{
\fill[\acolor,opacity=0.9] (\x,\y) rectangle (\x+1,\y+1);
}
\foreach \row/\bvalue in {2/2,3/2,4/5}{
\draw[green,->,line width=1] (\bvalue,\row+0.5) -> (0,\row+0.5);
}
\foreach \column/\avalue in {0/1,1/1,2/2,3/2,4/2,5/4}{
\draw[red,->,line width=1] (\column+0.5,\avalue+1) -> (\column+0.5,1);
}
\draw[draw=green,line width=4] (-1,0) -- (0,0) -- (0,2) -- (2,2) -- (2,4) -- (5,4) -- (5,5) -- (6,5);
\draw[draw=red,line width=2] (0,0) -- (0,2) -- (1,2) -- (2,2) -- (2,3) -- (5,3) -- (5,5) -- (6,5);
\foreach \x/\y/\label in {0/0/0}{
\draw node at (\x+0.5,\y+0.5) {$\textcolor{red}{\label}$};}
\end{tikzpicture}
\caption{The diagram of the configuration $u[0]=\configuration{1,1,2,2,2,4;0}{0,0,2,2,5}$ and its shifted version for the evaluation of $Q(u)$.  \label{fig:xpara-proof}}
\end{figure}

We recall that the length of the green arrow in Figure~\ref{fig:xpara-proof} in the row $i$ is $u_{b_i}$ and the length of the red arrow in column $j$ is $u_{a_{j}}$.
We assume that $u$ is parking hence the (pink) intersection area contains at most one (pink) cell on each row.
This implies that if one shifts the green path by one unit step to the west, the green and red arrows do not intersect, cf. Figure~\ref{fig:xpara-proof} on the right.
Then we partition the $(m-1)(n-1)$ cells of the orange rectangle defined by the two opposite corners $(0,1)$ and $(m-1,n)$.
There a three kind of cells:
\begin{itemize}
\item the cells crossed by an horizontal green arrow counted by $\sum_{i=2}^{n} u_{b_i}$, (we remind that $u_{b_1}=0$),
\item the cells crossed by a vertical red arrow counted by $\sum_{j=1}^{m-1} u_{a_j}$, and
\item the blue cells which are the unvisited left cells in $u[0]$ hence counted by $\xpara(u[0])$.
\end{itemize}
This partition of cells implies that
$$ (m-1)(n-1) = \degree(u[0])+\xpara(u[0]).$$
Since $\rank(u[0]) = 0$ we finally have
$$ (m-1)(n-1) = \degree(u[0])+\xpara(u[0])-\rank(u[0]) = Q(u)+1.$$
\end{proof}

\section{Symmetry of $K_{m,n}(x,y)$}

We are now in position of proving the symmetry in $x$ and $y$ of $K_{m,n}(x,y)$. It turns out that its explanation relies on the Riemann-Roch theorem for graphs. Compare the following theorem with \cite[Proposition 15 and Corollary 16]{corileborgne}

\begin{theorem} \label{thm:x_y_symmetry}
For any bipartite complete graph $K_{m,n}$ we have  $K_{m,n}(x,y) = K_{m,n}(y,x)$.
\end{theorem}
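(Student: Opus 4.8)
The plan is to deduce the symmetry $K_{m,n}(x,y) = K_{m,n}(y,x)$ from the Riemann-Roch theorem of Baker and Norine \cite[Theorem 1.12]{baker}, using the interpretation of $\xpara$ and $\ypara$ established in Lemma~\ref{lem:xy_to_degrank}. Recall that Riemann-Roch for a graph $G$ with genus $g = |E| - |V| + 1$ states that for any divisor $D$,
$$
\rank(D) - \rank(K - D) = \degree(D) - g + 1,
$$
where $K$ is the canonical divisor of degree $2g - 2$ (with $K_{c_k} = d_{c_k} - 2$ at each vertex $c_k$). For $K_{m,n}$ one computes $|V| = m+n$, $|E| = mn$, so $g = mn - m - n + 1 = (m-1)(n-1)$. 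The key point is that, after translating through the change of variables \eqref{eq:x_y_from_d_r}, the involution $u \mapsto K - u$ on divisors should interchange the roles of $\xpara$ and $\ypara$.

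First I would make precise the duality map. Since $K_{m,n}(x,y)$ is a generating function over parking sorted configurations and $\rank$, $\degree$ are toppling- and permutation-invariant by Lemma~\ref{lem:symmetry}, I may regard the sum as being over equivalence classes of divisors under $\topplingandpermutingequiv{S_{m-1}\times S_n}$. The map $u \mapsto K - u$ descends to such classes (the canonical divisor is fixed by graph automorphisms and the Laplacian is symmetric, so the map respects both toppling and permuting equivalence). Using Lemma~\ref{lem:xy_to_degrank}, which gives $\xpara(u) = g + \rank(u) - \degree(u)$ and $\ypara(u) = \rank(u) + 1$, I would compute the effect of the duality on the pair $(\xpara, \ypara)$. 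Writing $u^\ast$ for the parking sorted representative of the class of $K - u$, Riemann-Roch yields $\rank(u^\ast) = \rank(K - u) = \rank(u) - \degree(u) + g - 1$, while $\degree(u^\ast) = \degree(K) - \degree(u) = 2g - 2 - \degree(u)$. Substituting into the formulas of Lemma~\ref{lem:xy_to_degrank}, I expect to find
$$
\xpara(u^\ast) = \ypara(u) \quad\text{and}\quad \ypara(u^\ast) = \xpara(u),
$$
so that $u \mapsto u^\ast$ is an involution on parking sorted configurations on $K_{m,n}$ that exchanges $\xpara$ and $\ypara$. This immediately gives $K_{m,n}(x,y) = K_{m,n}(y,x)$, since each monomial $x^{\xpara(u)}y^{\ypara(u)}$ is sent to $x^{\ypara(u)}y^{\xpara(u)} = y^{\xpara(u^\ast)}x^{\ypara(u^\ast)}$ and the map is a bijection on the index set.

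The main obstacle I anticipate is bookkeeping: I must verify that $u \mapsto u^\ast$ is genuinely a well-defined involution on the set indexing $K_{m,n}(x,y)$, namely parking sorted configurations, and that the arithmetic with $K$, $g$, and the two formulas of Lemma~\ref{lem:xy_to_degrank} comes out exactly right with no off-by-one error in the degree of $K$ or in $g$. In particular one must confirm that $\rank$ on the right-hand side of Riemann-Roch uses the same sink-based convention as ours and that passing from an arbitrary divisor to its parking sorted representative preserves both statistics (which holds because $\rank$ and $\degree$ are class invariants and $\sort(\park(\cdot))$ selects the canonical representative). The paper also promises a purely combinatorial explanation via the cylindric diagram; as an alternative or a check, I would exhibit the symmetry directly by describing a geometric involution on the bi-infinite strip that reflects the cylindric diagram so as to swap unvisited left cells with visited right cells, but the Riemann-Roch route is the cleaner one and is what the statement is pointing toward.
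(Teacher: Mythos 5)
Your proposal is correct and follows essentially the same route as the paper: the paper also applies the Baker--Norine Riemann--Roch theorem to the canonical divisor $d$ (with $d_{a_i}=n-2$, $d_{b_j}=m-2$), uses Lemma~\ref{lem:xy_to_degrank} to get $\xpara(d-u)=\ypara(u)$ and $\ypara(d-u)=\xpara(u)$, and concludes via the involution $u\leftrightarrow d-u$. Your added bookkeeping --- passing to the parking sorted representative $\sort(\park(d-u))$ and checking the map descends to equivalence classes --- is a point the paper's terse proof glosses over, and your arithmetic (with $g=(m-1)(n-1)$ and $\degree(d)=2g-2$) matches the paper's exactly.
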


\begin{proof}
The \emph{canonical divisor} $K_G$ on the bipartite complete graph $G=K_{m,n}$ is the configuration $d$ such that $d_{a_i}=n-2$ and $d_{b_j}=m-2$ for all $i$ and $j$.
The Riemann-Roch theorem for graphs of Baker and Norine (cf. \cite[Theorem 1.12]{baker}) in the particular case of $K_{m,n}$ states that
$$\rank(u)-\rank(d-u)=\degree(u)- |E_{K_{m,n}}| + |V_{K_{m,n}}|$$
where $|E_{K_{m,n}}| = mn$, $|V_{K_{m,n}}|=m+n$.

We have
$$\begin{array}{lcl}
 \xpara(d-u) & = &(m-1)(n-1)+\left(\rank(d-u)-\degree(d-u)\right)\\
\ & = & (m-1)(n-1)+\left(\rank(u)-mn+(m+n)\right)\\
\ & = & \rank(u)+1=\ypara(u)\\
\end{array}$$
where in the second equality we applied the Riemann-Roch theorem to the configuration $d-u$ in order to get $\xpara(d-u)=\ypara(u)$.

Similarly for $u=d-(d-u)$ we obtain that $\ypara(d-u)=\xpara(u)$, hence the involution $u\leftrightarrow d-u$ shows the expected symmetry.
\end{proof}

\begin{remark}
As on $K_n$ \cite[Lemma 18]{corileborgne}, a \emph{superimposition principle} on the graphical interpretation for $K_{m,n}$ also gives  a combinatorial proof of this symmetry.
We only sketch the argument here, as the details already given in \cite{corileborgne} should help to fill in the gaps.
It appears that the cylindric diagram of $\sort(\park(d-u))$ may be superimposed with the cylindric diagram of $\sort(\park(u))$ with the algorithm computing the rank labeling the cells in the reverse order and the right and left components switched.
There is also a shift to be fixed in this reverse labeling of cells, so that the first cell incrementing the rank from $-1$ is labeled by $0$.
It turns out that, via this involution, at any point of the algorithm, the notion of visited and unvisited cells are switched, as well as the notions of left and right cells.
Hence this involution shows in terms of cells that $\xpara(\sort(\park(d-u))) = \ypara(\sort(\park(u)))$ and $\ypara(\sort(\park(d-u))) = \xpara(\sort(\park(u)))$ proving combinatorially the symmetry of $K_{m,n}(x,y)$. 
\end{remark}

\section{The generating function $\mathcal{F}(x,y,w,h)$}

We now turn our attention to the generating function
$$
\mathcal{F}(x,y,w,h):=\sum_{n\geq 1,m\geq 1}K_{m,n}(x,y)w^mh^n.
$$
In order to provide a formula for this formal power series, we start by considering the generating function
$$ F_u(x,y) := \sum_{s\in \mathbb{Z}} x^{\xpara(u[s])}y^{\ypara(u[s])}$$
of $(u[s])_{s\in\mathbb{Z}}$ according to the bistatistic $(\xpara,\ypara)$, where $u[*]$ is a parking sorted partial configuration on $K_{m,n}$. Notice that
$$
K_{m,n}(x,y)=\sum_{u}F_{u}(x,y)
$$
where the sum on the right hand side is over all parking sorted partial configurations $u[*]$ on $K_{m,n}$.

We observe that Lemma~\ref{lem:inc_sink} reveals some geometric sums which are hidden in $F_u(x,y)$.
Indeed this lemma is equivalent to saying that if $L(s+1)$ is at the left of the cut (i.e. the red path) in the cylindric diagram of the partial configuration $u[*]$ then
$$ x^{\xpara(u[s+1])}y^{\ypara(u[s+1])} = \frac{1}{x}.x^{\xpara(u[s])}y^{\ypara(u[s])},$$
while if $L(s+1)$ is at its right then
$$ x^{\xpara(u[s+1])}y^{\ypara(u[s+1])} = y.x^{\xpara(u[s])}y^{\ypara(u[s])}.$$

This suggests to partition $\mathbb{Z}$ into the maximal intervals with the property that the cells in the cylindric diagram of $u$ labelled by the elements of a given interval are all on the same side of the cut (i.e. they have all the same color, red or green), and then to partition the series $F_u(x,y)$ into a sum of finitely many series according to those intervals. To make this more explicit we introduce some notation.

A configuration $u[s]$ is a \emph{positive boundary configuration} if $L(s+1)$ is at the right of the cut (of the cylindric diagram of $u$) while $L(s)$ is at its left.
A configuration $u[s]$ is a \emph{negative boundary configuration} if $L(s+1)$ is at the left of the cut (of the cylindric diagram of $u$) while $L(s)$ is at its right.

Let $s_+^{(0)}\leq s_+^{(1)}\leq \ldots \leq s_+^{(k)}$ be the values for which $u[s_+^{(i)}]$ is a positive boundary configuration (there are clearly finitely many), and set $S_u^+=\{s_+^{(0)},s_+^{(1)},\ldots, s_+^{(k)}\}$.
Define similarly $S_u^-=\{s_-^{(0)},s_-^{(1)},\ldots, s_-^{(k-1)}\}$ as the set of values for which $u[s_-^{(i)}]$ is a negative boundary configuration. Observe that the indices of the elements of $S_u^-$ go up to $k-1$: indeed $|S_u^+|=|S_u^-|+1$ since $L(s)$ is at the left of the cut for $-s$ large enough, while it is at its right for $s$ large enough.

Observe that the two sets $S_u^+$ and $S_u^-$ define the partition of $\mathbb{Z}$ that we mentioned above:
$$\mathbb{Z}=]-\infty,s_+^{(0)}] \sqcup \coprod_{i=0}^{k-1} \left([s_+^{(i)}+1,s_-^{(i)}] \sqcup [s_-^{(i)}+1,s_+^{(i+1)}]\right) \sqcup [s_+^{(k)}+1,+\infty[\, .$$

Compare the following lemma with \cite[Lemma 20]{corileborgne}
\begin{lemma} \label{lem:formula_F_u}
For any parking sorted partial configuration $u[*]$ on $K_{m,n}$, we have
$$ F_u(x,y) = \frac{1-xy}{(1-x)(1-y)}\left(\sum_{s_+\in S_u^+ } x^{\xpara(u[s_+])}y^{\ypara(u[s_+])} - \sum_{s_- \in S_u^-} x^{\xpara(u[s_-])}y^{\ypara(u[s_-])}\right).$$
\end{lemma}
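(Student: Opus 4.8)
The plan is to evaluate $F_u(x,y)=\sum_{s\in\mathbb{Z}}T(s)$, where I abbreviate $T(s):=x^{\xpara(u[s])}y^{\ypara(u[s])}$, by breaking the sum along the partition of $\mathbb{Z}$ displayed just before the statement and observing that on each block $T$ runs through a geometric progression. The starting point is Lemma~\ref{lem:inc_sink}, which I would first rephrase multiplicatively: passing from $s$ to $s+1$ multiplies $T(s)$ by $x^{-1}$ when $L(s+1)$ lies at the left of the cut, and by $y$ when $L(s+1)$ lies at its right. By the very definitions of $S_u^+$ and $S_u^-$, the cell $L(s)$ is at the left exactly on the blocks $]-\infty,s_+^{(0)}]$ and $[s_-^{(i)}+1,s_+^{(i+1)}]$, and at the right exactly on the blocks $[s_+^{(i)}+1,s_-^{(i)}]$ and $[s_+^{(k)}+1,+\infty[\,$; hence on a left block consecutive terms have ratio $x^{-1}$ (as $s$ increases) and on a right block ratio $y$.

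Next I would sum $T$ over each block in terms of its two endpoint values, working in the ring of formal power series so that the two semi-infinite geometric tails make sense: on the left tail the exponent of $x$ tends to $+\infty$ as $s\to-\infty$, giving the contribution $T(s_+^{(0)})/(1-x)$, and on the right tail the exponent of $y$ tends to $+\infty$ as $s\to+\infty$, giving $T(s_+^{(k)}+1)/(1-y)$. For a finite right block $[s_+^{(i)}+1,s_-^{(i)}]$ the geometric sum equals $\bigl(T(s_+^{(i)}+1)-y\,T(s_-^{(i)})\bigr)/(1-y)$, and for a finite left block $[s_-^{(i)}+1,s_+^{(i+1)}]$ it equals $\bigl(T(s_+^{(i+1)})-x\,T(s_-^{(i)}+1)\bigr)/(1-x)$. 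The key piece of bookkeeping that makes everything collapse is to re-express, through the single-step rule, each endpoint that is adjacent to a boundary configuration: $T(s_+^{(i)}+1)=y\,T(s_+^{(i)})$ since $L(s_+^{(i)}+1)$ is at the right, and $T(s_-^{(i)}+1)=x^{-1}T(s_-^{(i)})$ since $L(s_-^{(i)}+1)$ is at the left.

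After these substitutions each finite right block contributes $y\bigl(T(s_+^{(i)})-T(s_-^{(i)})\bigr)/(1-y)$, each finite left block contributes $\bigl(T(s_+^{(i+1)})-T(s_-^{(i)})\bigr)/(1-x)$, and the two tails contribute $T(s_+^{(0)})/(1-x)$ and $y\,T(s_+^{(k)})/(1-y)$. I would then collect separately all contributions carrying the denominator $1-x$ and all carrying $1-y$. Using $|S_u^+|=|S_u^-|+1$, the terms $T(s_+^{(i)})$ telescope against the terms $T(s_-^{(i)})$ and both collections reduce to the same quantity
$$\Sigma:=\sum_{s_+\in S_u^+}T(s_+)-\sum_{s_-\in S_u^-}T(s_-),$$
so that $F_u(x,y)=\Sigma\bigl(\tfrac{1}{1-x}+\tfrac{y}{1-y}\bigr)$. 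The computation is then finished by the elementary identity $\tfrac{1}{1-x}+\tfrac{y}{1-y}=\tfrac{1-xy}{(1-x)(1-y)}$, which produces exactly the claimed formula once $T$ is written back in terms of $\xpara$ and $\ypara$.

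I expect the only genuine obstacle to be the bookkeeping of the third step: one must pair the endpoint of each block with the correct boundary configuration, keep straight the index shift encoded in $|S_u^+|=|S_u^-|+1$, and verify that the mixed alternating sums over $S_u^+$ and $S_u^-$ telescope to the \emph{same} $\Sigma$ in both the $x$- and the $y$-denominator. Everything else is routine manipulation of finite and formally convergent geometric series.
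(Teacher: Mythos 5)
Your proposal is correct and follows essentially the same route as the paper's proof: both split $\mathbb{Z}$ into the maximal one-sided blocks determined by $S_u^+$ and $S_u^-$, sum the geometric series on each block (two infinite tails and the finite left/right blocks), and then collect the contributions so that each $x^{\xpara(u[s_+])}y^{\ypara(u[s_+])}$ uniformly acquires the coefficient $\tfrac{1}{1-x}+\tfrac{y}{1-y}=\tfrac{1-xy}{(1-x)(1-y)}$ and each $x^{\xpara(u[s_-])}y^{\ypara(u[s_-])}$ the negative of it. The only (immaterial) difference is bookkeeping: the paper shifts the block boundaries to $[s_+^{(i)}+1,s_-^{(i)}-1]$ and $[s_-^{(i)},s_+^{(i+1)}]$ so each block sum is expressed directly in terms of the boundary configurations, whereas you keep the partition as stated and then rewrite the endpoints via the one-step rule $T(s_+^{(i)}+1)=yT(s_+^{(i)})$, $T(s_-^{(i)}+1)=x^{-1}T(s_-^{(i)})$ — the resulting algebra is identical.
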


\begin{proof}
We already observed that the two sets $S_u^+$ and $S_u^-$ define a partition of $\mathbb{Z}$. By Lemma~\ref{lem:inc_sink}, in each interval $I$ of this partition, the sum $\sum_{s\in I} x^{\xpara(u[s])}y^{\ypara(u[s])}$ is geometric of reason either $x$, reversing the order of summation in this case, or $y$; in both cases it can be summed easily by using the boundary terms defining $I$.

Using the more compact notation $W_u(s):=x^{\xpara(u[s])}y^{\ypara(u[s])}$, it is easy to check that for $i=0,1,\dots,k-1$
$$\begin{array}{lcl}
I = ]-\infty,s_+^{(0)}] & \text{ gives } & \displaystyle \sum_{s\in I} x^{\xpara(u[s])}y^{\ypara(u[s])} = \frac{W_u(s_+^{(0)})}{1-x},\\
I = [s_+^{(i)}+1,s_-^{(i)}-1] & \text{ gives } & \displaystyle \sum_{s\in I} x^{\xpara(u[s])}y^{\ypara(u[s])} = \frac{yW_u(s_+^{(i)})-W_u(s_-^{(i)})}{1-y},\\
I = [s_-^{(i)},s_+^{(i+1)}] & \text{ gives } & \displaystyle \sum_{s\in I} x^{\xpara(u[s])}y^{\ypara(u[s])} = \frac{W_u(s_+^{(i+1)})-xW_u(s_-^{(i)})}{1-x},\text{ and}\\
I = [s_+^{(k)}+1,+\infty[ & \text{ gives } & \displaystyle \sum_{s\in I} x^{\xpara(u[s])}y^{\ypara(u[s])} = \frac{yW_u(s_+^{(k)})}{1-y}.\\
\end{array} $$
Now we sum these series corresponding to the intervals, and we observe that in this sum for $i=0,1,\ldots, k$ (uniformly!) the coefficient of $W_u(s_+^{(i)})$ is
$$ \frac{1}{1-x}+\frac{y}{1-y} = \frac{1-xy}{(1-x)(1-y)},$$
while for $i=0,1,\ldots, k-1$ (still uniformly), the coefficient of $W_u(s_-^{(i)})$
 is
$$ -\frac{1}{1-y}-\frac{x}{1-x} = -\frac{1-xy}{(1-x)(1-y)},$$
finally leading to the desired formula.
\end{proof}

So Lemma \ref{lem:formula_F_u} reduces the computation of $F_u(x,y)$, and hence of $\mathcal{F}(x,y,w,h)$, to the computation of the generating function of the boundary configurations. In order to understand this one, we provide a description of the boundary configurations and their bistatistic $(\xpara,\ypara)$ in terms of pairs of binomial paths.

We define a \emph{pair of binomial paths in the $m\times n$ grid},
or simply a \emph{pair}, as a pair of two binomial paths in the
$m\times n$ grid starting at the southwest corner: a \emph{red
path}, which consists of the shuffling of $n$ north steps and
$m-1$ east steps, and a \emph{green path}, which consists of the
shuffling $n$ north steps and $m$ east steps. When needed, we will
identify these binomial paths with words in the alphabet
$\{\eaststep,\northstep\}$ where $\eaststep$ indicates an east
step and $\northstep$ indicates a north step.

The \emph{east suffix} of binomial path is the longest suffix of the path (i.e. of the corresponding word in $\{\eaststep,\northstep\}$) that contains only east steps.
Such a pair is a \emph{positive boundary pair} if the red path starts with a north step, the green path starts with an east step, and the length of the east suffix of the green path is longer than or equal to the length of the east suffix of the red path.
A pair is a \emph{negative boundary pair} if the red path starts with an east step, the green path starts with a north step, and the length of the east suffix of the green path is shorter than or equal to the length of the east suffix of the red path.

In a pair $p$ of binomial paths in the $m\times n$ grid, we consider on each row of the grid the cells between the red and the green north steps crossing that row.
If the red north step is at the left of the green step, then we say that these cells belongs to the $\xarea$, otherwise we say that they belong to the $\yarea$. Notice that the $\xarea$ corresponds to what we called the \emph{intersection area} in previous sections.
Hence $\xarea(p)$, respectively $\yarea(p)$, will denote the total number of cells in the $\xarea$, respectively in the $\yarea$.
We also denote by $\xrow(p)$ the number of rows where the crossing red north step is weakly to the right of the crossing green north step, and by $\yrow(p)$ the number of the other rows, where the crossing red north step is strictly to the left of the crossing green step.

We also set for any positive pair $p_+$
$$ \xpara_+(p_+) := \xarea(p_+)+\xrow(p_+) \mbox{ and } \ypara_+(p_+) := \yarea(p_+)-\yrow(p_+),$$
and for any negative pair $p_-$,
$$ \xpara_-(p_-) := \xarea(p_-) \mbox{ and } \ypara_-(p_-) := \yarea(p_-).$$

\begin{example}
Consider the pair $p$ of binomial paths in Figure \ref{fig:enum_1}.
\begin{figure}[h]
\includegraphics[width=60mm,clip=true,trim=15mm 215mm 90mm 25mm]{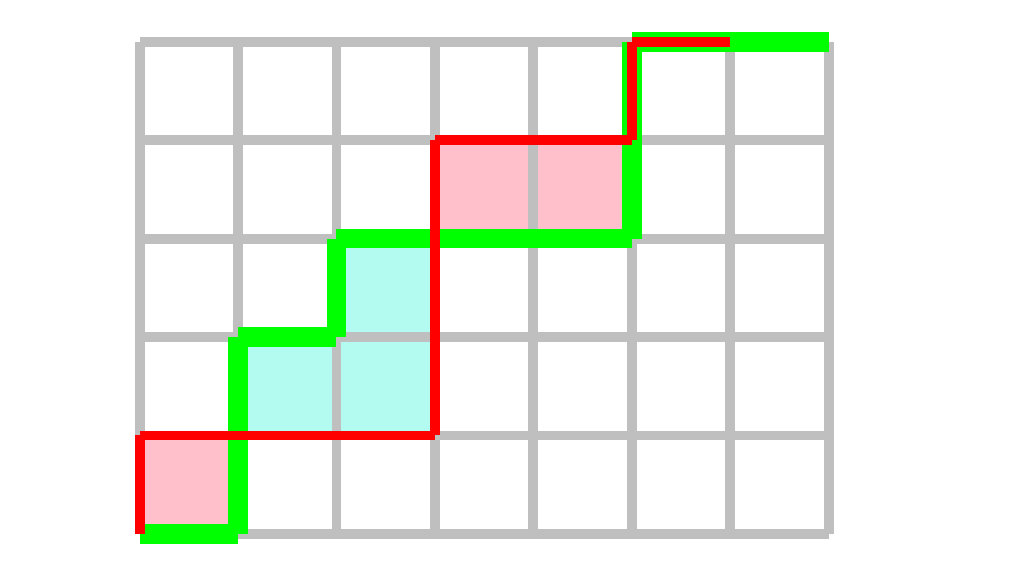}
\caption[ciccia]{A positive boundary pair in the $7\times 5$ grid.}
\label{fig:enum_1}
\end{figure}
Here we have a $7\times 5$ grid, the red path is $\northstep\eaststep \eaststep\eaststep \northstep\northstep\northstep\eaststep\eaststep \northstep\eaststep$, and the green path is $\eaststep \northstep\northstep \eaststep \northstep\eaststep \eaststep\eaststep \northstep\northstep\eaststep\eaststep$. Observe that the red path starts with a north step, and its east suffix is $\eaststep$, while the green path starts with an east step, and its east suffix is $\eaststep\eaststep$, hence $p$ is a positive boundary pair. The $\xarea$ corresponds to the pink cells, hence $\xarea(p)=3$, the $\yarea$ corresponds to the blue cells, hence $\yarea(p)=3$. Observe also that $\xrow(p)=3$ and $\yrow(p)=2$, hence $\xpara_+(p)=\xarea(p)+\xrow(p)=6$, while $\ypara_+(p)=\yarea(p)-\yrow(p)=1$.
\end{example}
\begin{remark} \label{rem:positive_pairs_stable}
Notice that every positive boundary pair corresponds to a certain stable sorted configuration: for example the pair in Figure \ref{fig:enum_1} corresponds to the stable sorted configuration $u=\configuration{0,0,0,3,3,4;*}{0,0,1,4,4}$. Notice that in this case $u$ is stable but not parking, as it has $2$ cells of its intersection area in the same row.
\end{remark}


The following lemma explains our optimistic notations.

\begin{lemma}\label{lem:f+f-}
For any complete bipartite graph $K_{m,n}$ there exists two bijections, one denoted $f_+$, from the positive boundary configurations into the positive boundary pairs (in the $m\times n$ grid), and another one denoted $f_-$, from the negative boundary configurations into the negative boundary pairs, with the following properties:
\begin{enumerate}
\item for any positive boundary configuration $u[s_+]$, if we set $p_+=f_+(u[s_+])$, then
$$ (\xpara(u[s_+]),\ypara(u[s_+])) = (\xpara_+(p_+),\ypara_+(p_+));$$

\item for any negative boundary configuration $u[s_-]$, if we set $p_-=f_-(u[s_-])$, then
$$ (\xpara(u[s_-]),\ypara(u[s_-])) = (\xpara_-(p_-),\ypara_-(p_-)).$$
\end{enumerate}
\end{lemma}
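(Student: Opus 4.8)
The plan is to realize both bijections by \emph{cutting open} the spiraling cylindric diagram of the partial configuration $u[*]$ at the boundary value and reading off one fundamental period as a pair of binomial paths. Recall from Section~\ref{sec:cylindric} that on the cylinder of circumference $m-1$ and height $n$ the red cut closes up into a fixed closed path, while the green path spirals, drifting one unit east after each turn of $n$ rows; a positive (resp.\ negative) boundary value $s$ records a turn at which the label $L(s+1)$ has just crossed the cut from the green (left) side to the red (right) side (resp.\ from right to left). First I would define $f_+$ on a positive boundary configuration $u[s_+]$ by placing an $m\times n$ window anchored at the crossing, i.e.\ at the cell where the spiral passes from the green to the red side between $L(s_+)$ and $L(s_++1)$: the red path of the pair is the single period of the cut read inside this window, and the green path is the single period of the spiraling green path read inside the same window. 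Since red uses $m-1$ east steps and green uses $m$ east steps against $n$ north steps each, these two periods are exactly a red and a green path of a pair in the $m\times n$ grid. Writing $s_+ = qn+t$ by Euclidean division fixes both the vertical phase (the row $t$) and, through $q$, the horizontal alignment of the green period against the red period.

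Next I would check that $p_+ = f_+(u[s_+])$ is genuinely a positive boundary pair. The conditions that the red path starts with $\northstep$ and the green path starts with $\eaststep$ translate the local shape at the bottom of the window forced by $L(s_++1)$ lying on the right of the cut, exactly as in a stable sorted diagram (cf.\ Remark~\ref{rem:positive_pairs_stable}); the inequality between the east suffixes encodes that at the top of the window the green step has not yet overtaken the cut, i.e.\ that $s_+$ is the \emph{last} left label before the crossing. The inverse is then forced: given a positive boundary pair, its red path recovers the cut, its green path recovers the green path of $u[*]$, and the anchoring recovers $s_+$; reassembling the periods into the spiral reproduces a diagram whose boundary at the chosen turn is $u[s_+]$. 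This makes $f_+$ a bijection, and the analogous, in fact simpler, construction gives $f_-$ on negative boundary pairs.

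The computational heart is matching the statistics, which I would do using Lemma~\ref{lem:inc_sink} together with the periodicity. Here $\xpara(u[s_+])$ counts the left cells labelled strictly above $s_+$ and $\ypara(u[s_+])$ the right cells labelled at most $s_+$; cutting at the boundary identifies each of these finite sets with cells of a single period of the pair. On the nose the left cells of one period are the $\xarea$ cells, but each of the $\xrow(p_+)$ rows in which the red step is weakly right of the green step contributes one extra left cell that the spiral carries across the cut precisely at this turn, giving $\xpara(u[s_+]) = \xarea(p_+) + \xrow(p_+) = \xpara_+(p_+)$; dually, each of the $\yrow(p_+)$ rows removes one right cell, giving $\ypara(u[s_+]) = \yarea(p_+) - \yrow(p_+) = \ypara_+(p_+)$. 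For a negative boundary the spiral crosses the cut in the opposite sense and these row corrections cancel, so that $\xpara(u[s_-]) = \xarea(p_-)$ and $\ypara(u[s_-]) = \yarea(p_-)$, matching $\xpara_-$ and $\ypara_-$.

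The step I expect to be the main obstacle is pinning down the two correction terms $+\xrow$ and $-\yrow$ exactly, and more generally making the window extraction and its inverse rigorous in the presence of the spiral: one must verify that exactly one fundamental period is captured, that the wrap-around at the top and bottom of the $n$ rows is accounted for without double counting, and that the boundary condition is equivalent to the east-suffix inequality. Once the dictionary between the local picture at a crossing and the defining data of a boundary pair is set up carefully, the verification of $(1)$ and $(2)$ reduces to finite bookkeeping on the fundamental domain, much as in the analogous argument for $K_n$ in \cite{corileborgne}.
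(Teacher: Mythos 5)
Your proposal is correct and follows essentially the same route as the paper's proof: the paper formalizes your ``crossing'' as a \emph{boundary intersection} of the two periodic paths, defines $f_{\pm}$ by reading off the pair in the $m\times n$ window anchored there, and matches the bistatistics via the same local cylindric relabeling, with exactly your corrections $+\xrow$ and $-\yrow$ in the positive case and none in the negative case. The two points you flag as delicate are settled in the paper by invoking the cyclic-lemma machinery of \cite{addl} for bijectivity (with the negative case reduced to the positive one by moving to the lowest positive boundary intersection) and by observing that the window labeling is the strip labeling shifted by the boundary value, so that visited/unvisited cells correspond to negative/non-negative labels.
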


In fact, in order to prove the lemma, we can define explicitly the maps $f_+$ and $f_-$.
Those bijections essentially consist in identifying the appropriate intersections of the red and green periodic paths of $u[*]$ embedded in $\mathbb{Z}^2$, each such intersection being related simultaneously to one boundary pair and one boundary configuration.

In a pair of periodic binomial paths in $\mathbb{Z}^2$, we define a \emph{positive boundary intersection} $i_+$ to be a vertex of $\mathbb{Z}^2$ common to the red and the green periodic paths such that:
\begin{enumerate}
\item $i_+$ is followed (going from southwest towards northeast)
by a north red step and an east green step, and \item the last
green north step before $i_+$ is weakly at the left of the red
north step crossing the same row.
\end{enumerate}

Similarly, a \emph{negative boundary intersection} $i_-$ is a vertex common to the red and green periodic paths such that:
\begin{enumerate}
\item $i_-$ is followed by an east red step and a north green step, and
\item the last green north step before $i_-$ is weakly at the right of the red north step crossing the same row.
\end{enumerate}
Our construction is better understood by looking at an example.
\begin{example}
In Figure~\ref{fig:boundary_intersections} we draw the periodic diagram of the parking sorted configuration $u=\configuration{0,0,0;*}{0,0,1}$ on $K_{4,3}$, whose diagram can be recognized in the lowest blue dashed $4\times 3$ grid. In the picture we included the labeling of the cells to the left of the green north steps: recall that this diagram with this labeling is simply an unrolled version of our cylindric diagram of $u$. Moreover notice that, for graphical practicality, we wrote $\overline{n}$ instead of $-n$ for $n\in \mathbb{N}\setminus \{0\}$.

\begin{figure}[ht!]
\begin{tikzpicture}[scale=0.75]
\draw[draw=black!30!white] (-4,-3) grid (16,12);
\foreach \x in {-1,0,1,2,3}{
\begin{scope}[shift={(4*\x,3*\x)}]
\draw[green,line width=4] (0,0) -- (1,0) -- (1,2) -- (2,2) -- (2,3) -- (4,3);
\end{scope}
}
\foreach \x in {-1,0,1,2,3}{
\begin{scope}[shift={(3*\x,3*\x)}]
\draw[red,line width=2] (0,0) -- (0,1) -- (3,1) -- (3,3);
\end{scope}
}
\foreach \x/\y/\label in {-4/-3/\overline{3},0/0/0,4/3/3,8/6/6,-4/-2/\overline{2},0/1/1,4/4/4,8/7/7,-3/-1/\overline{1},1/2/2,5/5/5,9/8/8,12/9/9,12/10/10,13/11/11}{
  \draw node at (\x+0.5,\y+0.5) {$\label$};
}
\foreach \x/\y/\acolor/\sign/\label in {0/0/blue/+/0,-3/-2/orange/-/0,3/3/blue/+/1,1/1/orange/-/1,6/6/blue/+/2,5/4/orange/-/2,9/8/blue/+/3}{
\draw[\acolor,line width=1] (\x-0.2,\y+0.2) -- (\x+0.2,\y-0.2);
\draw[draw=\acolor,line width=1] (\x+0.2,\y+0.2) -- (\x-0.2,\y-0.2);
\draw node[\acolor] at (\x+0.5,\y-0.5) {$i_\sign^{(\label)}$};
\draw[dashed,line width=1,draw=\acolor,rounded corners] (\x,\y) rectangle (\x+4,\y+3);
}
\foreach \x/\y/\acolor/\sign/\label in {-3/-1/blue/+/0,1/2/blue/+/1,5/5/blue/+/2,8/7/blue/+/3,0/0/orange/-/0,4/3/orange/-/1,8/6/orange/-/2}{
\draw[\acolor] node at (\x-0.3,\y+0.5) {$s_{\sign}^{(\label)}=$};
}
\end{tikzpicture}
\caption{Boundary intersections and related boundary configurations and pairs.}\label{fig:boundary_intersections}
\end{figure}
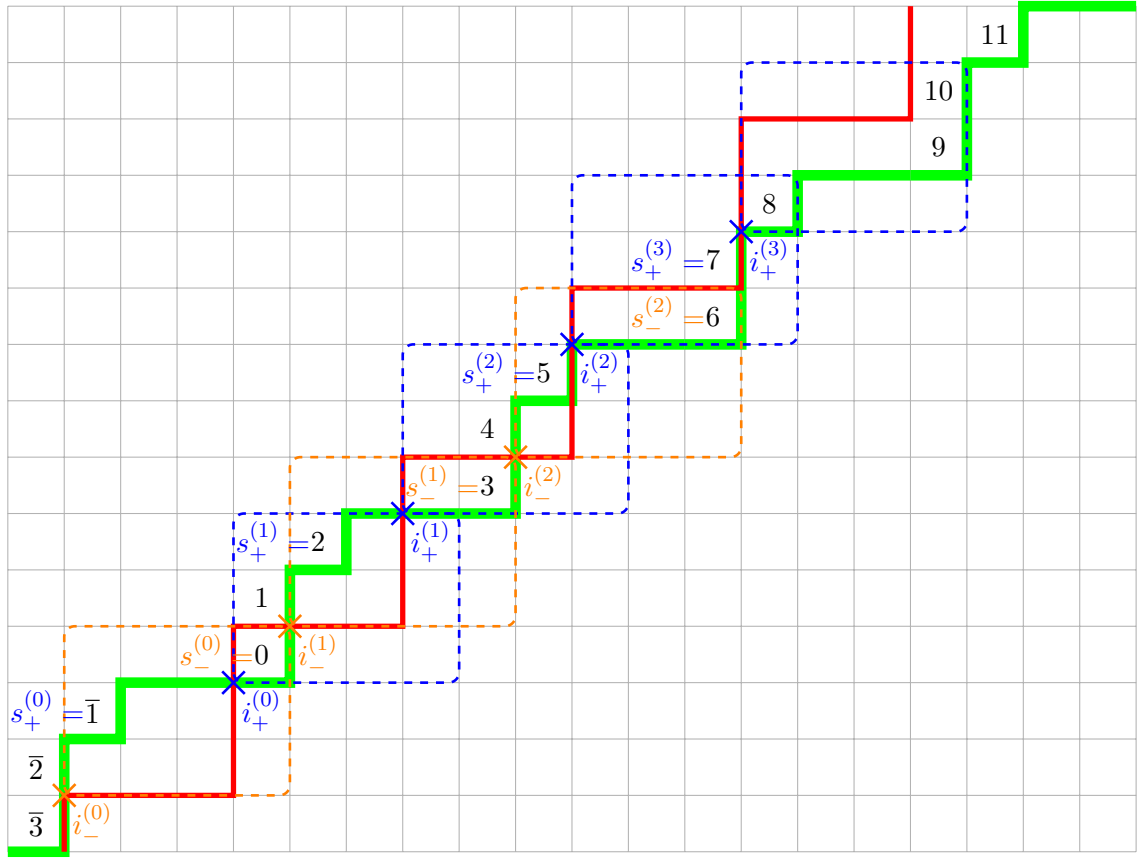

Observe that we labeled the positive boundary intersections by $\{i_+^{(0)},i_+^{(1)},i_+^{(2)}, i_+^{(3)}\}$, and we denoted them by a blue cross, while we labelled the negative boundary intersections by $\{i_-^{(0)},i_-^{(1)},i_-^{(2)}\}$, and we denoted them by an orange cross.

Following the black labels of cells along the green path, we observe that
$$
S_u^{+} = \{s_+^{(0)},s_+^{(1)},s_+^{(2)}, s_+^{(3)}\} = \{-1,2,5,7\}\text{ and } S_u^{-} = \{s_-^{(0)},s_-^{(1)},s_-^{(2)}\}=\{0,3,6\}.
$$

It is clear from the definitions (and it can be checked in our
example) that the value $s_+^{(k)}$, defining the positive
boundary configuration $u[s_+^{(k)}]$, is the label of the cell
left to the north green step crossing the row just below
$i_+^{(k)}$. We define $f_+(u[s_+^{(k)}])$ to be the pair of
binomial paths read in the $m\times n$ grid whose southwest corner
is $i_+^{(k)}$ (the blue dashed rectangle in our picture).

We claim that $f_+(u[s_+^{(k)}])$ is indeed a positive boundary pair. Indeed the conditions on the initial red and green steps follow immediately from the condition (1) of the definition of $i_+^{(k)}$. Notice that the other condition to be a positive boundary pair, i.e. that the length of the east suffix of the green path should be longer than or equal to the length of the east suffix of the red path, can be translated by saying that in the top row of the diagram of the pair the red north step should be at most a unit step to the left of the green north step in the same row. Using the periodicities of the green and the red paths, this is equivalent to condition (2) of the definition of $i_+^{(k)}$ (cf. the diagram of Figure \ref{fig:positive_case}), and this proves the claim.
\begin{figure}[ht!]
\begin{center}
\begin{tikzpicture}[scale=0.9]
\draw[black!15!white] (-5,-1) grid (11,5);
\draw[blue,line width=2,dashed] (0,0) rectangle (8,4);
\draw[green,line width=2] (3,0) -- (3,1);
\fill[fill=red,opacity=0.2,line width=0] (2,0) rectangle (3,1);
\draw node[red] at (2.5,0.5) {$s+1$};
\draw[red,line width=2] (0,0) -- (0,1);
\draw[dashed,<->] (0,0.5) -- (2,0.5);
\draw node at (1,0.9) {$i \geq 0$};
\draw[red,line width=2] (0,0) -- (-2,0);
\draw[dashed,<->] (0,-0.5) -- (-2,-0.5);
\draw node at (-0.8,-1) {$j \geq 0$};
\draw[red,line width=2] (-2,0) -- (-2,-1);
\draw[dashed,<->] (-2,-0.5) -- (-4,-0.5);
\draw node at (-3,-1) {$k\geq 0$};
\fill[fill=red,opacity=0.2,line width=0] (-5,-1) rectangle (-4,0);
\draw node[red] at (-4.5,-0.5) {$s$};
\draw[green,line width=2] (-4,-1) -- (-4,0);
\draw[green,line width=2] (-4,0) -- (-2,0);
\draw[green,line width=2] (0,0) -- (3,0);
\draw[blue,line width=2] (-0.25,-0.25) -- (0.25,0.25);
\draw[blue, line width=2] (-0.25,0.25) -- (0.25,-0.25);
\draw[red, line width=2] (5,3) -- (5,4) -- (7,4) -- (7,5);
\fill[red, opacity=0.2] (3,3) rectangle (4,4);
\draw node[red] at (3.5,3.5) {$\! s\! +\! n$};
\fill[red, opacity=0.2] (10,4) rectangle (11,5);
\draw node[red] at (10.5,4.5) {$\!\!\!\!\!\begin{array}{rl} s\!\!\!\!\! &  + n\\ & -1\end{array}$};
\draw[green, line width=2] (4,3) -- (4,4) -- (5,4);
\draw[green, line width=2] (7,4) -- (11,4) -- (11,5);
\draw[dashed,<->] (4,3.5) -- (5,3.5);
\draw node at (4.5,3) {$k-1$};
\draw[dashed,<->] (5,3.5) -- (7,3.5);
\draw node at (6,3) {$j$};
\draw[dashed,<->] (7,4.5) -- (10,4.5);
\draw node at (8.5,5) {$i+1$};
\draw[blue] node at (-5.2,-0.5) {$s_+^{(k)}=$};
\draw[blue] node at (0.5,-0.5) {$i_+^{(k)}$};
\end{tikzpicture}
\end{center}
\caption{Diagram of $f_+(u[s_+^{(k)}])$.} \label{fig:positive_case}
\end{figure}

Similarly, the value $s_-^{(k)}$, defining the negative boundary
configuration $u[s_-^{(k)}]$, is the label of the cell left to the
north green step that crosses the $n$-th row above $i_-^{(k)}$
(the first one being the row just above $i_-^{(k)}$). We define
$f_-(u[s_-^{(k)}])$ to be the pair of binomial paths read in the
$m\times n$ grid whose southwest corner is $i_-^{(k)}$ (the dashed
orange rectangle in our picture).

We claim that $f_-(u[s_-^{(k)}])$ is indeed a negative boundary pair: the proof is similar to the one for $f_+$ and it is left to the reader (cf. the diagram of Figure \ref{fig:negative_case}).
\begin{figure}[ht!]
\begin{center}
\begin{tikzpicture}[scale=0.9]
\draw[black!15!white] (-5,-1) grid (11,5);
\draw[orange,line width=2,dashed] (0,0) rectangle (8,4);
\draw[red,line width=2] (2,0) -- (2,1);
\fill[fill=red,opacity=0.2,line width=0] (-1,0) rectangle (0,1);
\draw node[red] at (-0.5,0.5) {$\!\!\!\!\!\begin{array}{rl} s\!\!\!\!\! &  - n\\ & +1\end{array}$};
\draw[green,line width=2] (0,0) -- (0,1);
\draw[dashed,<->] (0,0.5) -- (2,0.5);
\draw node at (1,0.9) {$i \geq 1$};
\draw[green,line width=2] (0,0) -- (-2,0);
\draw[dashed,<->] (0,-0.5) -- (-2,-0.5);
\draw node at (-0.8,-1) {$j \geq 0$};
\draw[green,line width=2] (-2,0) -- (-2,-1);
\draw[dashed,<->] (-2,-0.5) -- (-5,-0.5);
\draw node at (-3.5,-1) {$k\geq 0$};
\fill[fill=red,opacity=0.2,line width=0] (-3,-1) rectangle (-2,0);
\draw node[red] at (-2.5,-0.5) {$\!\!\!\!\begin{array}{rl} s\!\!\!\!\! &  - n\\ & \,\,\end{array}$};
\draw[red,line width=2] (-5,-1) -- (-5,0);
\draw[red,line width=2] (-5,0) -- (-2,0);
\draw[red,line width=2] (0,0) -- (2,0);
\draw[orange,line width=2] (-0.25,-0.25) -- (0.25,0.25);
\draw[orange, line width=2] (-0.25,0.25) -- (0.25,-0.25);
\draw[green, line width=2] (6,3) -- (6,4) -- (8,4) -- (8,5);
\fill[red, opacity=0.2] (5,3) rectangle (6,4);
\draw node[orange] at (4.8,3.5) {$s_-^{(k)}=$};
\draw node[red] at (5.5,3.5) {$s$};
\fill[red, opacity=0.2] (7,4) rectangle (8,5);
\draw node[red] at (7.5,4.5) {$s+1$};
\draw[red, line width=2] (2,3) -- (2,4) -- (6,4);
\draw[red, line width=2] (8,4) -- (9,4) -- (9,5);
\draw[dashed,<->] (2,3.2) -- (6,3.2);
\draw node at (4,3) {$k+1$};
\draw[dashed,<->] (6,3.5) -- (8,3.5);
\draw node at (7,3) {$j$};
\draw[dashed,<->] (8,4.5) -- (9,4.5);
\draw node at (8.5,5) {$i-1$};
\draw node[orange] at (0.5,-0.5) {$i_-^{(k)}$};
\end{tikzpicture}
\end{center}
\caption{Diagram of $f_-(u[s_-^{(k)}])$. \label{fig:negative_case}}
\end{figure}
\end{example}

Now that we defined our maps $f_+$ and $f_-$ we can turn to the proof that they have all the claimed properties.
\begin{proof}[Proof of Lemma~\ref{lem:f+f-}]
We first discuss the bijectivities of $f_+$ and $f_-$. Both they
rely on some results from \cite{addl}. Indeed in \cite[Proposition
5.12]{addl} it was show that given a pair of binomial paths in the
$m\times n$ grid, if we look at the corresponding periodic
diagram, and we move a $m\times n$ grid with its southwest corner
anchored to the green path, we always pass by the diagrams of $m$
distinct stable sorted configurations; moreover all the stable
sorted configurations occur in precisely one of these periodic
diagrams (in fact this is the essence of the Cyclic Lemma in
\cite[Lemma 3.1]{addl}).

We already observed that $f_+(u[s_+^{(k)}])$ is a positive boundary pair and that such pairs correspond to stable sorted configurations (cf. Remark  \ref{rem:positive_pairs_stable}). By construction, $f_+(u[s_+^{(k)}])$ is obtained by looking at the periodic diagram of the parking sorted configuration $u$, and then by moving along the diagram according to $s_+^{(k)}$ in order to get the corresponding pair. By the results mentioned above, starting with distinct parking sorted partial configurations $u[*]$ and $u'[*]$ will lead necessarily to distinct periodic diagrams, hence to distinct pairs, while for $k\neq j$ the configurations $f_+(u[s_+^{(k)}])$ and $f_+(u[s_+^{(j)}])$ will be two of the stable configurations of the periodic diagram of the same $u[*]$, hence once again they will be distinct. This proves the injectivity of $f_+$. For the surjectivity we can use again the same results: given a positive boundary pair, this corresponds to a stable sorted configuration, hence looking at the corresponding periodic diagram we can deduce the corresponding parking sorted configuration $u[*]$, and finally the correct value $s_+$ for the sink. This finishes the proof of the bijectivity of $f_+$.

The proof of the bijectivity of $f_-$ is analogous: we need only
to observe that a negative boundary pair of binomials paths
contains necessarily a positive boundary intersection (cf. the
diagram of Figure \ref{fig:negative_case}). So, in the periodic
diagram of our pair, we can move the southwest corner of the
$m\times n$ grid on the lowest of such positive boundary
intersections, and this by definition will give us a positive
boundary pair. By the periodicity of the red and the green paths,
such a pair is uniquely determined by our original pair. Now the
same argument that we used to prove the bijectivity of $f_+$ gives
us also the bijectivity of $f_-$.

It remains to show the coincidence of the bistatistics $(\xpara,\ypara)$ for $u[s_{\pm}^{(k)}]$ and $(\xpara_{\pm},\ypara_{\pm})$ for $f_{\pm}(u[s_{\pm}^{(k)}])$.

In order to read $(\xpara(u[s_+^{(k)}]),\ypara(u[s_+^{(k)}]))$ from the positive boundary pair $f_+(u[s_+^{(k)}])$ we will use a local cylindric labeling of the diagram of the pair. The construction is better understood in an example: consider the diagram to the left of Figure~\ref{fig:cyl_lab_pairs}. This is the diagram of a positive boundary pair. We labelled the cells of this diagram in the usual cylindric way, following the green path, starting with $0$ in the cell to the left of the lowest north green step. We already observed that this cell is the one labelled by $s_+^{(k)}+1$ in the periodic diagram of the corresponding parking sorted configuration (cf. the diagram of Figure \ref{fig:positive_case}). Again, we denoted $-n$ by $\overline{n}$ for all $n\in \mathbb{N}\setminus \{0\}$.

\begin{figure}[ht!]
\begin{center}
\begin{tikzpicture}[scale=0.75]
\draw[black!30!white] (0,0) grid (9,7);
\fill[fill=red,opacity=0.3] (0,0) -- (2,0) -- (2,2) -- (1,2) -- (1,1) -- (0,1) -- cycle;
\fill[fill=blue,opacity=0.3] (2,2) -- (5,2) -- (5,5) -- (4,5) -- (4,4) -- (2,4) -- cycle;
\fill[fill=blue,opacity=0.3] (5,6) -- (7,6) -- (7,7) -- (5,7) -- cycle;
\draw[dashed,draw=blue,line width=2] (0,0) rectangle (9,7);
\draw[line width=4,draw=green] (0,0) -- (3,0) -- (3,4) -- (5,4) -- (5,5) -- (6,5) -- (6,7) -- (9,7);
\draw[line width=2,draw=red] (0,0) -- (0,1) -- (1,1) -- (1,2) -- (2,2) -- (5,2) -- (5,6) -- (7,6) -- (7,7) -- (8,7);
\draw[blue,line width=2] (-0.25,0.25) -- (0.25,-0.25);
\draw[blue,line width=2] (-0.25,-0.25) -- (0.25,0.25);
\foreach \x/\y/\label in {0/0/$\overline{14}$,1/0/$\overline{7}$,2/0/$0$,3/0/$7$,4/0/$14$,5/0/$21$,6/0/$\ldots$,
0/1/$\overline{13}$,1/1/$\overline{6}$,2/1/$1$,3/1/$8$,4/1/$15$,5/1/$22$,6/1/$\ldots$,
0/2/$\overline{12}$,1/2/$\overline{5}$,2/2/$2$,3/2/$9$,4/2/$16$,5/2/$23$,6/2/$\ldots$,
0/3/$\overline{11}$,1/3/$\overline{4}$,2/3/$3$,3/3/$10$,4/3/$17$,5/3/$24$,6/3/$\ldots$,
2/4/$\overline{10}$,3/4/$\overline{3}$,4/4/$4$,5/4/$11$,6/4/$18$,7/4/$25$,8/4/$\ldots$,
3/5/$\overline{9}$,4/5/$\overline{2}$,5/5/$5$,6/5/$12$,7/5/$19$,8/5/$26$,9/5/$\ldots$,
3/6/$\overline{8}$,4/6/$\overline{1}$,5/6/$6$,6/6/$13$,7/6/$20$,8/6/$27$,9/6/$\ldots$}{
\draw node at (\x+0.5,\y+0.5) {\label};
}
\end{tikzpicture}
\begin{tikzpicture}[scale=0.75]
\draw[black!30!white] (0,0) grid (9,7);
\fill[red,opacity=0.3] (2,1) -- (4,1) -- (4,2) -- (5,2) -- (5,3) -- (2,3) -- cycle;
\fill[red,opacity=0.3] (6,5) -- (8,5) -- (8,7) -- (7,7) -- (7,6) -- (6,6) -- cycle;
\fill[blue,opacity=0.3] (0,0) -- (2,0) -- (2,1) -- (0,1) -- cycle;
\fill[blue,opacity=0.3] (5,3) -- (6,3) -- (6,5) -- (5,5) -- cycle;
\draw[dashed,draw=orange,line width=2] (0,0) rectangle (9,7);
\draw[line width=4,draw=green] (0,0) -- (0,1) -- (4,1) -- (4,2) -- (5,2) -- (5,5) -- (8,5) -- (8,7) -- (9,7);
\draw[line width=2,draw=red] (0,0) -- (2,0) -- (2,3) -- (6,3) -- (6,6) -- (7,6) -- (7,7) -- (8,7);
\draw[orange,line width=2] (-0.25,0.25) -- (0.25,-0.25);
\draw[orange,line width=2] (-0.25,-0.25) -- (0.25,0.25);
\foreach \x/\y/\label in {-1/0/$\overline{7}$,0/0/$0$,1/0/$7$,2/0/$\ldots$,
1/1/$\overline{20}$,2/1/$\overline{13}$,3/1/$\overline{6}$,4/1/$1$,5/1/$8$,6/1/$\ldots$,
1/2/$\overline{26}$,2/2/$\overline{19}$,3/2/$\overline{12}$,4/2/$\overline{5}$,5/2/$2$,6/2/$9$,7/2/$\ldots$,
3/3/$\overline{11}$,4/3/$\overline{4}$,5/3/$3$,6/3/$10$,7/3/$17$,8/3/$\ldots$,
3/4/$\overline{10}$,4/4/$\overline{3}$,5/4/$4$,6/4/$11$,7/4/$18$,8/4/$\ldots$,
5/5/$\overline{16}$,6/5/$\overline{9}$,7/5/$\overline{2}$,8/5/$5$,9/5/$\ldots$,
6/6/$\overline{8}$,7/6/$\overline{1}$,8/6/$6$,9/6/$\ldots$}{
\draw node at (\x+0.5,\y+0.5) {\label};
}
\end{tikzpicture}
\end{center}

\caption{Local cyclindric labeling of (positive then negative) boundary pairs \label{fig:cyl_lab_pairs}}
\end{figure}

The general remark is that this new cylindric diagram still allows us to compute $\xpara$ and $\ypara$ of the original parking sorted configuration $u[s_+^{(k)}]$: indeed it turns out that in this way we labelled cylindrically by negative values the cells visited by the algorithm computing the rank of this configuration, and by non-negative values the unvisited cells. The periodicity of the diagram implies now that $\xpara(u[s_+^{(k)}])$ is precisely equal to the number of cells to the left of the red path that we now labelled by non-negative numbers (blue in our picture), while $\ypara(u[s_+^{(k)}])$ is the number of cells to the right of the red path that we labelled by negative numbers (pink in our picture).

It is now clear (and it can be checked in our example) that the $\xpara(u[s_+^{(k)}])$ unvisited cells (hence with non-negative labels) to the left of the red path are also counted by $\xarea(p_+)+\xrow(p_+) = \xpara_+(p_+)$ and that the $\ypara(u[s_+^{(k)}])$ visited cells (hence with negative labels) to the right of the red path are also counted by $\yarea(p_+)-\yrow(p_+) = \ypara_+(p_+)$.

In order to read $(\xpara(u[s_-^{(k)}]),\ypara(u[s_-^{(k)}]))$ from the negative boundary pair $f_-(u[s_-^{(k)}])$ we can use a similar construction, but with a different convention. Consider the diagram to the right of Figure~\ref{fig:cyl_lab_pairs}. This is the diagram of a negative boundary pair. We labelled the cells of this diagram in the usual cylindric way, following the green path, starting with $\overline{1}=-1$ in the cell to the left of the highest green north step. We already observed that this cell is the one labelled by $s_-^{(k)}$ in the periodic diagram of the corresponding parking sorted configuration (cf. the diagram of Figure \ref{fig:negative_case}). Again, we denoted $-n$ by $\overline{n}$ for all $n\in \mathbb{N}\setminus \{0\}$.

As before, also in this case the negative labels correspond to cells visited by the algorithm that computes the rank of $u[s_-^{(k)}]$, while the non-negative labels correspond to the unvisited cells.

It is again clear (and it can be checked in our example) that the $\xpara(u[s_-^{(k)}])$ unvisited cells to the left of the red path (blue in our picture) are also counted by $\xarea(p_-) = \xpara_-(p_-)$ and the $\ypara(u[s_-^{(k)}])$ visited cells to the right of the red path (pink in our picture) are also counted by $\yarea(p_-) = \ypara_-(p_-)$.

This completes the proof of Lemma~\ref{lem:f+f-}.
\end{proof}

Now that we identified the bistatistics on boundary configurations with the ones on boundary pairs, we need only one last ingredient to get our formula for $\mathcal{F}(x,y,w,h)$: parallelogram polyominoes.

A \emph{parallelogram} (or \emph{staircase}) \emph{polyomino}
can be defined as the \underline{non-empty} union of unit squares
delimited by a pair of binomial paths in a grid that cross only in
the southwest corner and in the northeast corner of the grid.
Naturally the $\area$ of such a polyomino is just the number of
these unit squares, its $\width$ is the the width of the bounding
box, and its $\height$ is the height of the bounding box.
\begin{remark}
Notice that Proposition \ref{prop:sorted_recurrent} says that the recurrent sorted configurations on $K_{m,n}$ are the stable sorted configurations on $K_{m,n}$ whose diagram is a polyomino of $\width$ $m$ ad $\height$ $n$.
\end{remark}

In \cite{bousquetmelouviennot} the generating function of parallelogram polyominoes according to their area (counted by $q$), their width (counted by $w$) and their height (counted by $h$)
$$P(q;w,h) := \sum_{P} q^{\area(P)}w^{\width(P)}h^{\height(P)}  $$
is shown to be equal to
$$ P(q;w,h) = qwh \frac{L(qw,qh)}{L(w,h)}$$
where
$$ L(w,h) := \sum_{n\geq 0,m\geq 0} \frac{(-1)^{m+n}h^nw^mq^{m+n+1 \choose 2}}{(q)_n(q)_m},$$
and $(a)_n := \prod_{i=0}^{n-1}(1-q^ia)$.

We want to prove the following formula (compare it with \cite[Theorem 21]{corileborgne}).
\begin{theorem} \label{thm:gf_main_formula}
$$
\mathcal{F}(x,y,w,h)=\frac{(1-xy)(hw-P(x;w,h)P(y;w,h))}{(1-x)(1-y)(1-h-w-P(x;w,h)-P(y;w,h))}.
$$
\end{theorem}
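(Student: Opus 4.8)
The plan is to assemble the three ingredients already prepared—the rational expression for $F_u$ in Lemma~\ref{lem:formula_F_u}, the bijections $f_+,f_-$ of Lemma~\ref{lem:f+f-}, and the polyomino generating function $P$—into the stated closed form. First I would pass from configurations to pairs. Starting from $K_{m,n}(x,y)=\sum_u F_u(x,y)$, summed over all parking sorted partial configurations $u[*]$, I insert the formula of Lemma~\ref{lem:formula_F_u}; its inner sums run over the boundary configurations $u[s_\pm]$. By Lemma~\ref{lem:f+f-} the maps $f_+$ and $f_-$ turn these (ranging over all $u$ at once) into the positive, respectively negative, boundary pairs in the $m\times n$ grid, matching $(\xpara,\ypara)$ with $(\xpara_+,\ypara_+)$, resp. $(\xpara_-,\ypara_-)$. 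Since the factor $\tfrac{1-xy}{(1-x)(1-y)}$ is independent of $u$, it pulls out uniformly, so with
$$\mathcal{B}^{\pm}(x,y,w,h):=\sum_{m,n\geq 1}\Bigl(\sum_{p_\pm}x^{\xpara_\pm(p_\pm)}y^{\ypara_\pm(p_\pm)}\Bigr)w^mh^n$$
I obtain $\mathcal{F}=\tfrac{1-xy}{(1-x)(1-y)}\bigl(\mathcal{B}^{+}-\mathcal{B}^{-}\bigr)$, and it remains to prove that $\mathcal{B}^{+}-\mathcal{B}^{-}=\dfrac{hw-P(x;w,h)P(y;w,h)}{1-h-w-P(x;w,h)-P(y;w,h)}$.

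Second, I would set up a polyomino decomposition of boundary pairs. Reading the two binomial paths from the southwest corner along their common vertices, each piece between consecutive crossings is one of four atoms: a shared east step, a shared north step, a parallelogram polyomino with the red path on the upper-left and the green path on the lower-right (all of whose cells lie in the $\xarea$), or one with green upper-left and red lower-right (cells in the $\yarea$). Under the weight $x^{\xpara_\pm}y^{\ypara_\pm}w^{m}h^{n}$ these contribute exactly $w$, $h$, $P(x;w,h)$ and $P(y;w,h)$: the width and height of each atom supply the $w$- and $h$-exponents (these are additive, and they sum to the full $m$ and $n$), while $\xarea$ and $\yarea$ are additive over the polyomino pieces and become the $\area$ statistic counted by $q=x$, resp. $q=y$, in $P$. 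The one delicate point is the $+\xrow$, $-\yrow$ shift built into $\xpara_+,\ypara_+$, which I would absorb exactly as in the proof of Lemma~\ref{lem:f+f-}, checking row by row that the corrections convert the per-row contributions into clean area weights. A sequence of such atoms with no endpoint constraint is then counted by the geometric series $\tfrac{1}{1-h-w-P(x)-P(y)}$, which is the denominator.

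Third—and this is where the real work lies—I would account for the endpoint data producing the numerator $hw-P(x;w,h)P(y;w,h)$. Because the green path carries exactly one more east step than the red path (whose atoms each feed red and green equally), a boundary pair cannot be a balanced atom-sequence alone: the last crossing occurs before the northeast corner, leaving one unbalanced final segment that carries the extra east step, and the boundary conditions (red starting north / green starting east with the east-suffix inequality for positive pairs, and the mirrored condition for negative pairs) pin down the atoms flanking it. Forming the difference $\mathcal{B}^{+}-\mathcal{B}^{-}$ is what isolates this gadget, and I would realize the cancellation through a sign-reversing involution on the disjoint union of positive and negative pairs, whose fixed points survive with net weight $hw$ (a north step together with an east step) minus $P(x)P(y)$ (an $x$-polyomino followed by a $y$-polyomino), each glued to an arbitrary central atom-sequence. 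I expect the main obstacle to be precisely this bookkeeping: proving the involution is well defined and exhaustive, that the surviving contribution is $hw-P(x)P(y)$ with the correct sign, and that the degenerate small grids (e.g. $m=1$ or $n=1$, where the cylindric picture collapses) are handled correctly. The whole argument is parallel to \cite[Theorem~21]{corileborgne}, and I would use that analogy to structure the case analysis and the verification of the involution.
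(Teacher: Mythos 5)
Your first step is exactly the paper's: combining Lemma~\ref{lem:formula_F_u} with the bijections of Lemma~\ref{lem:f+f-} to reduce the theorem to computing the generating functions of positive and negative boundary pairs, with the factor $\frac{1-xy}{(1-x)(1-y)}$ pulled out uniformly. The atom decomposition you then propose (shared east steps, shared north steps, red-topped and green-topped parallelogram polyominoes) is also the decomposition used in the paper's Lemma~\ref{lem:enumeration_of_boundaries}. But there is a genuine gap in how you weight the atoms for \emph{positive} pairs. You claim the $+\xrow$, $-\yrow$ corrections can be ``absorbed, checking row by row,'' so that every atom contributes $w$, $h$, $P(x;w,h)$ or $P(y;w,h)$. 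That is false: since $\xpara_+=\xarea+\xrow$ and $\ypara_+=\yarea-\yrow$, every row of a green-topped polyomino acquires an extra factor $x$, every row of a red-topped polyomino loses a factor $y$, and every shared north step is weighted $xh$ rather than $h$. The paper encodes this by the substitutions $P(x;w,xh)$ and $P(y;w,h/y)$, so the positive sum has denominator $1-w-xh-P(y;w,h/y)-P(x;w,xh)$, which differs from the denominator $1-h-w-P(x;w,h)-P(y;w,h)$ of the negative sum. This wrecks your third step: a sign-reversing involution on the disjoint union of positive and negative pairs cannot produce the cancellation you want, because corresponding atoms on the two sides carry different weights, so $\mathcal{B}^+-\mathcal{B}^-$ is not a signed count over a common weight system.

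The missing engine of the proof is algebraic, not bijective: the paper derives the polyomino decomposition identity $P(q;w,h)=(qh+P(q;w,qh))(w+P(q;w,h))$, solves it to express $P(x;w,xh)$ and $P(y;w,h/y)$ in terms of $P(x;w,h)$ and $P(y;w,h)$, and only after this substitution do the two rational expressions combine into $\frac{hw-P(x;w,h)P(y;w,h)}{1-h-w-P(x;w,h)-P(y;w,h)}$ times the prefactor. You would also need the finer endpoint analysis you gloss over: the east-suffix conditions force a three-case regular expression for positive pairs involving the sublanguage of red-topped polyominoes whose top row has exactly one cell, and this is where numerator terms such as $(1-hx-w)hw$ actually come from; the negative pairs, by contrast, admit a single clean expression. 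So your skeleton (step one, and step two minus the weighting) matches the paper, but the two ingredients that make the computation close --- the modified substitutions for positive pairs and the functional equation for $P$ --- are absent, and the involution you propose in their place would not be well defined.
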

\begin{remark}
Notice that from the formula of Theorem \ref{thm:gf_main_formula} the symmetries of $\mathcal{F}(x,y,w,h)$ in $x$ and $y$ (proved combinatorially in Theorem \ref{thm:x_y_symmetry}) as well as in $w$ and $h$ (immediate from \eqref{eq:x_y_from_d_r} and the obvious $\widetilde{K}_{m,n}(d,r)=\widetilde{K}_{n,m}(d,r)$), are both apparent.
\end{remark}

We set
$$
K_{m,n}^+(x,y):=\sum_u x^{\xpara(u)}y^{\ypara(u)}
$$
where the sum is taken over all positive boundary configurations $u$ on $K_{m,n}$, and similarly
$$
K_{m,n}^-(x,y):=\sum_u x^{\xpara(u)}y^{\ypara(u)}
$$
where the sum is taken over all negative boundary configurations $u$ on $K_{m,n}$.

\begin{lemma}\label{lem:enumeration_of_boundaries}
We have
\begin{align*}
\sum_{n\geq 1,m\geq 1}K_{m,n}^+(x,y)w^mh^n & = \frac{(1-hx-w)hw +(w-h)P(x;w,xh)P(y;w,h/y) }{(1-w)(1-w-xh-P(y;w,h/y)-P(x;w,xh))}\\
 & + \frac{(1-hx - w + xw)hP(y;w,h/y) -hw P(x;w,xh)}{(1-w)(1-w-xh-P(y;w,h/y)-P(x;w,xh))}
\end{align*}
and
$$ \sum_{n\geq 1,m\geq 1}K_{m,n}^-(x,y)w^mh^n = \frac{P(x;w,h)P(y;w,h)}{(1-w)(1-h-w-P(x;w,h)-P(y;w,h))} $$
where $P(q;w,h)$ is the previously defined generating function of parallelogram polynominoes according to $\area$, $\width$ and $\height$.
\end{lemma}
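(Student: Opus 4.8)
The plan is to reduce the whole statement to an enumeration of boundary \emph{pairs} and then to decompose those pairs into parallelogram polyominoes. First I would invoke Lemma~\ref{lem:f+f-}: since $f_+$ and $f_-$ are bijections preserving the bistatistics, we may rewrite
$$K_{m,n}^+(x,y)=\sum_{p_+}x^{\xpara_+(p_+)}y^{\ypara_+(p_+)}\quad\text{and}\quad K_{m,n}^-(x,y)=\sum_{p_-}x^{\xpara_-(p_-)}y^{\ypara_-(p_-)},$$
the sums running over positive, respectively negative, boundary pairs in the $m\times n$ grid. In this way the two generating functions in the lemma become generating functions for pairs of binomial paths, weighted by $w^mh^n$ and by the statistics $\xarea,\yarea,\xrow,\yrow$.

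The geometric heart of the argument is a canonical decomposition of a pair along the lattice points common to its red and green paths. Between two consecutive common points the two paths bound a region that is either empty (a shared single step, contributing $w$ or $h$) or a genuine parallelogram polyomino (a ``bubble''). In a bubble whose red north steps lie to the left of the green ones the enclosed cells form the $\xarea$ (the intersection area), while in a bubble whose red north steps lie to the right the cells form the $\yarea$. Reading the statistics bubble by bubble gives a dictionary: $\xarea(p)$ and $\yarea(p)$ are the total areas of the $\xarea$- and $\yarea$-bubbles, while $\xrow$ and $\yrow$ record the \emph{heights} of the bubbles (together with the shared vertical steps). Consequently each bubble is enumerated by a copy of $P$. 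In the negative case, where $\xpara_-=\xarea$ and $\ypara_-=\yarea$, an $\xarea$-bubble contributes $P(x;w,h)$, a $\yarea$-bubble contributes $P(y;w,h)$, a shared east (resp.\ north) step contributes $w$ (resp.\ $h$), and a free sequence of such atoms is summed by $\tfrac{1}{1-h-w-P(x;w,h)-P(y;w,h)}$, matching the denominator of the negative formula. In the positive case the corrections $\xpara_+=\xarea+\xrow$ and $\ypara_+=\yarea-\yrow$ decorate the height of each bubble by an extra factor $x$ or $y^{-1}$ (because an $\xarea$-bubble sits on $\yrow$ rows while a $\yarea$-bubble sits on $\xrow$ rows), and likewise turn the shared north step into the atom $xh$; this is precisely what upgrades the plain series $P(\,\cdot\,;w,h)$ to the substituted series $P(\,\cdot\,;w,xh)$ and $P(\,\cdot\,;w,h/y)$ appearing in the statement.

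With the dictionary in hand the computation becomes bookkeeping of sequences. A boundary pair is an ordered sequence of bubbles and shared steps subject to constraints coming from the defining conditions (the prescribed first steps of the two paths, and the comparison of their east suffixes). I would isolate these as conditions on the \emph{first} and \emph{last} atoms of the sequence and on a terminal run of east steps along the top edge: the forced initial and terminal bubbles produce the numerators (the product of two copies of $P$ in the negative case, giving $P(x;w,h)P(y;w,h)$, and the several explicit terms in the positive case), the free middle sequence produces the $\tfrac{1}{1-\cdots}$ denominator, and the terminal run of shared east steps produces the prefactor $\tfrac{1}{1-w}$. Summing over $m,n\geq 1$ then passes from $K_{m,n}^{\pm}$ to the bivariate generating functions in $w,h$. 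The negative case, which I would carry out first, assembles cleanly as $\tfrac{P(x;w,h)P(y;w,h)}{(1-w)(1-h-w-P(x;w,h)-P(y;w,h))}$ and serves as a consistency check on the whole scheme.

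The main obstacle is the positive case. Unlike the negative one it is not symmetric under exchanging the two paths, because the green path carries exactly one more east step than the red path; tracking the position of this extra east step, whether it lands in the terminal shared run or is absorbed into the terminal polyomino, is what breaks the symmetry and is responsible for the asymmetric, multi-term numerator. The two delicate points are therefore: (i) proving that the decomposition into bubbles and shared steps is a genuine bijection onto the constrained sequences, with the boundary conditions translated correctly into constraints on the extremal atoms; and (ii) pinning down exactly the $x$-versus-$y$ decoration of the bubble heights together with the placement of the extra green east step, so that the geometric summation collapses to the stated closed form. I expect step (ii), reconciling the height decorations with the precise substitutions $xh$ and $h/y$ and with the green offset, to be where essentially all the work lies.
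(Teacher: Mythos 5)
Your plan follows the paper's own route (factor each boundary pair into shared steps and polyomino ``bubbles'', translate the boundary conditions into constraints on the extremal factors and on a terminal run of shared east steps, and sum the resulting regular expression as geometric series), so the architecture is sound, and your negative case is exactly the paper's. The genuine gap is in your dictionary between bubble types and the variables $x,y$: you weight the intersection-area bubbles (those where the red north steps lie to the left of the green ones, i.e.\ red is the upper path) by $x$. But $\xpara$ counts \emph{unvisited cells to the left of the red cut}, while intersection-area cells lie to its \emph{right}: they are precisely the cells the rank algorithm may visit, so they are counted by $\ypara$, hence by $y$. (You inherited this from the paper's definition paragraph for $\xarea$/$\yarea$, which is indeed stated backwards; it contradicts the paper's own Figure \ref{fig:cyl_lab_pairs} and Lemma \ref{lem:f+f-}, where the blue cells counted by $\xpara$ sit in the bubbles whose upper path is green.) The correct dictionary is: red-upper (intersection-area) bubbles carry $y$, with a $y^{-1}$ per row in the positive case, giving $P(y;w,h)$ resp.\ $P(y;w,h/y)$; green-upper bubbles and shared north steps carry $x$, with an extra $x$ per row in the positive case, giving $P(x;w,h)$, $P(x;w,xh)$ and $xh$.

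In the negative case your swap is invisible, because that formula is symmetric in the two series $P(x;w,h)$ and $P(y;w,h)$. In the positive case it is fatal: with your decorations the forced factors (the first bubble, and the bubble just before the terminal east run, both of which are red-upper) would be enumerated by $P(x;w,h/y)$, and the green-upper bubbles by $P(y;w,xh)$, so your computation would terminate in a formula built from $P(x;w,h/y)$ and $P(y;w,xh)$ rather than from $P(x;w,xh)$ and $P(y;w,h/y)$ as in the statement. These are genuinely different power series -- already the products $P(x;w,xh)\,P(y;w,h/y)$ and $P(y;w,xh)\,P(x;w,h/y)$ have different coefficients of $w^2h^3$, namely $x^2+x^4$ versus $x^3y+x^3y^{-1}$ -- so your claim that the decorations yield ``precisely'' the substitutions of the statement cannot be repaired by bookkeeping alone. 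Once the roles of $x$ and $y$ on the bubbles are exchanged, your outline does become the paper's proof; the one remaining point you leave unaddressed (and the paper does not) is that the east-suffix condition forces the factor before the terminal run of east steps to be a red polyomino whose top row has exactly one cell, a sub-language enumerated by $h\bigl(w+P(y;w,h/y)\bigr)$, and it is this case analysis that generates the several terms of the positive numerator.
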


\begin{proof}
\newcommand{\redgreeneast}{\tikz[scale=0.3]{\draw[green,line width=1] (0,0.05) -- (1,0.05);\draw[red,line width=1] (0,-0.05) -- (1,-0.05);}}
\newcommand{\redgreennorth}{\tikz[scale=0.3]{\draw[green,line width=1] (0.05,0) -- (0.05,1);\draw[red,line width=1] (-0.05,0) -- (-0.05,1);}}
\newcommand{\redpolynomino}{\tikz[scale=0.3]{\draw[red,line width=1] (0,0) -- (0,1) -- (1,1);\draw[green,line width=1] (0,0) -- (1,0) -- (1,1);}}
\newcommand{\redpolynominotopone}{\tikz[scale=0.3]{\draw[red,line width=1] (0,0) -- (0,0.8) -- (0.8,0.8) -- (0.8,1) -- ((1,1);\draw[green,line width=1] (0,0) -- (1,0) -- (1,1);}}
\newcommand{\greenpolynomino}{\tikz[scale=0.3]{\draw[green,line width=1] (0,0) -- (0,1) -- (1,1);\draw[red,line width=1] (0,0) -- (1,0) -- (1,1);}}
\newcommand{\greenpolynominotopone}{\tikz[scale=0.3]{\draw[green,line width=1] (0,0) -- (0,0.8) -- (0.8,0.8) -- (0.8,1) -- ((1,1);\draw[red,line width=1] (0,0) -- (1,0) -- (1,1);}}

We use the notation of regular (formal) languages: given a \emph{language} $L$, i.e. a set of words in some \emph{alphabet}, we denote by $L^*$ the language of all possible concatenations of a finite (possibly empty) sequence of words each coming from the language $L$. In particular we will denote any language $L$ as the formal sum of its words: so for example if $L=\{010,101,11011\}$ is our language in the alphabet $\{0,1\}$, then we identify $L$ with $010+101+11011$ and we will denote $L^*$ by $(010+101+11011)^*$. Finally, given two languages $L$ and $K$, we will denote by $LK$ the language of all possible concatenations of a word $v$ from $L$ with a word $w$ from $K$.

Consider the diagram of a boundary pair, with the red path incremented by a final east step. Such a pair can be decomposed into factors made up of superposed red and green north (\redgreennorth) or east (\redgreeneast) steps, or parallelogram polyominoes whose upper path is either red (\redpolynomino), which we call \emph{red polyominoes}, or green (\greenpolynomino), which we call \emph{green polyominoes}. The constraints on prefixes or suffixes of these paths coming from the definition of positive or negative boundary pairs may be translated into this decomposition. We want to express the languages of positive and negative boundary pairs in terms of these languages.

We denote by $\redpolynomino$ the language consisting of (i.e. the formal sum of) all the red polyominoes. Similarly we denote by $\greenpolynomino$ the language consisting of all the green polyomionoes. Here $\redgreennorth$ and $\redgreeneast$ will denote the languages consisting just of a superposed red and green north step and a superposed red and green east step respectively.

We discuss first the case of positive boundary pairs.

The first red step is north and the first green step is east, hence a positive boundary pair always starts with a red polynomino.
The constraint on east suffixes lengths is taking into account by a maximal suffix of superposed red and green east steps.
In addition, we have to discuss if the factor just before the maximal suffix in $(\redgreeneast)^*$ is a red polyomino ($\redpolynomino$) or something else ($\redgreennorth$ or $\greenpolynomino$).

$\ \circ$ If the factor just before the maximal suffix in $(\redgreeneast)^*$ is a red polyomino, then its topmost row contains exactly one cell: we denote by $\redpolynominotopone$ the language consisting of such polyominoes.
We then have to discuss if the first and last red polynominos are the same or if they are distinct.

If they are the same we are lead to
$$ \redpolynominotopone(\redgreeneast)^* $$
otherwise to
$$ \redpolynomino(\redgreeneast+\redgreennorth+\redpolynomino+\greenpolynomino)^*\redpolynominotopone(\redgreeneast)^*.$$

$\ \circ$ If the last factor before the maximal suffix in $(\redgreeneast)^*$ is either $\redgreennorth$ or $\greenpolynomino$, then, by the periodicity of the red path, the maximal suffix in $(\redgreeneast)^*$ must have length at least $1$. There are no further constraints, hence the language of these pairs can be described as
$$ \redpolynomino(\redgreeneast+\redgreennorth+\redpolynomino+\greenpolynomino)^*(\redgreennorth+\greenpolynomino)\redgreeneast(\redgreeneast)^*.$$

In conclusion, the language of positive boundary pairs can be described as
\begin{equation}  \label{eq:enum_positive}
\redpolynominotopone(\redgreeneast)^*+\redpolynomino(\redgreeneast+\redgreennorth+\redpolynomino+\greenpolynomino)^*\redpolynominotopone(\redgreeneast)^*+ \redpolynomino(\redgreeneast+\redgreennorth+\redpolynomino+\greenpolynomino)^*(\redgreennorth+\greenpolynomino)\redgreeneast(\redgreeneast)^*.
\end{equation}

We discuss now the case of negative boundary pairs.

Its first red step is east and the first green step is north, hence a negative boundary pair starts by a green polynomino.
The constraint on east suffixes length is taken into account by the maximal suffix in $(\redgreeneast)^*$ and the fact that there is a final red polynomino just before this suffix.
Therefore all the negative pairs are described by
\begin{equation}  \label{eq:enum_negative}
\greenpolynomino(\redgreeneast+\redgreennorth+\redpolynomino+\greenpolynomino)^*\redpolynomino(\redgreeneast)^*.
\end{equation}

Now that the positive and negative negative pairs are described by such non-ambiguous regular expressions, it remains to compute the generating functions that take into account the parameters $(\xpara_+,\ypara_+)$, respectively $(\xpara_-,\ypara_-)$, and of course the width and the height of the grids.

The case of negative boundary pairs $p_-$ is easier: according to the end of the proof of Lemma~\ref{lem:f+f-}, the area of red polyominoes counts exactly all the contributions to the statistic $\ypara_-(p_-)$, while the area of green polynominoes describes exactly all the contributions to the statistic $\xpara_-(p_-)$.

Hence our generating function for $\greenpolynomino$ is $P(x;w,h)$, for $\redpolynomino$ is $P(y;w,h)$, and trivially for $\redgreeneast$ and $\redgreennorth$ is simply
$w$ and $h$ respectively. All this together with \eqref{eq:enum_negative} gives the claimed
$$ \sum_{n\geq 1,m\geq 1}K_{m,n}^-(x,y)w^mh^n =\qquad \qquad \qquad\qquad \qquad \qquad  \qquad \qquad \qquad \qquad \qquad $$
\begin{align*}
& = P(x;w,h)\frac{1}{1-(w+h+P(x;w,h)+P(y;w,h))} P(y;w,h) \frac{1}{1-w}\\
& =\frac{P(x;w,h) P(y;w,h) }{(1-w)(1-w-h-P(x;w,h)-P(y;w,h))} .
\end{align*}


The case of positive boundary pairs $p_+$ is slightly more complicate since, still according to the end of the proof of Lemma~\ref{lem:f+f-}, the area of red polyominoes should be decremented by one on each row for the contribution of $\ypara_+(p_+)$, while the area of green polynominoes and vertical superposed green and red north steps should be incremented by one on each row for the contribution of $\xpara_+(p_+)$.
These modifications may be taken into account by suitable changes of variables, counting the height as follows: our generating function for $\greenpolynomino$ is $P(x;w,xh)$, for $\redpolynomino$ is $P(y;w,h/y)$, for $\redgreeneast$ is just $w$, and for $\redgreennorth$ is just $xh$. In order to compute the generating function of $\redpolynominotopone$, we observe that adding one extra top row of one cell to a parallelogram polyomino is unambiguous, hence its generating function is $h(w+P(y;w,h/y))$.
All this together with \eqref{eq:enum_positive} gives the claimed
$$ \sum_{n\geq 1,m\geq 1}K_{m,n}^+(x,y)w^mh^n =\qquad \qquad \qquad \qquad \qquad \qquad \qquad \qquad \qquad \qquad$$
\begin{align*}
 & =  h(w+P(y;w,h/y))\frac{1}{1-w}+\\
 & + P(y;w,h/y)\frac{1}{1-w-xh-P(y;w,h/y)-P(x;w;xh)}h(w+P(y;w,h/y))\frac{1}{1-w}\\
 & + P(y;w,h/y)\frac{1}{1-w-xh-P(y;w,h/y)-P(x;w;xh)}(xh+P(x;w,xh))\frac{w}{1-w}\\
  & = \frac{(1-hx-w)hw +(w-h)P(x;w,xh)P(y;w,h/y) }{(1-w)(1-w-xh-P(y;w,h/y)-P(x;w,xh))}\\
 & + \frac{(1-hx - w + xw)hP(y;w,h/y) -hw P(x;w,xh)}{(1-w)(1-w-xh-P(y;w,h/y)-P(x;w,xh))}.
\end{align*}
\end{proof}

%

We are now in a position to prove Theorem \ref{thm:gf_main_formula}.
\begin{proof}[Proof of Theorem \ref{thm:gf_main_formula}]
In the formulae of Lemma~\ref{lem:enumeration_of_boundaries} we can remove the substitutions on the variable $h$ by using the following decomposition of parallelogram polyominoes.

We look at the lowest occurrence of two rows of unit cells of the parallelogram polyomino whose intersection consists of a single unit edge.

If there is not such a pair of rows, then either this is the one cell polyomino, counted by $whq$, or we can remove the first cell of each row and still get a parallelogram polyomino: these polyominoes are counted by $wP(q;w,qh)$.

If there is such a pair of rows, then our polyomino decomposes in two parallelogram polyominoes: the first one consists of the rows below (and including) the lowest row of our pair, while the second one is an unrestricted parallelogram polyomino. Hence these polyominoes are counted by $(qwh+wP(q;w,qh))\frac{P(q;w,h)}{w}$.

This decomposition leads to the identity
$$ P(q;w,h) = (qh+P(q;w,qh))(w+P(q;w,h)).$$
From this equation we deduce that
$$ P(q;w,h) =  \frac{w(qh + P(q;w,qh))}{1-qh - P(q;w,qh)} \mbox{ and } P(q;w,qh) = \frac{(1-qh)P(q;w,h)-hqw}{w+P(q;w,h)}.$$
Making the substitution $q=x$ in the second equation gives an expression of $P(x;w,xh)$ in terms of $P(x;w,h)$, namely
$$
P(x;w,xh)=\frac{(1-xh)P(x;w,h)-hxw}{w+P(x;w,h)},
$$
while making the substitutions $q=y$ and $h=h/y$ in the first equation gives an expression of $P(y;w,h/y)$ in terms of $P(y;w,h)$, namely
$$
P(y;w,h/y)=\frac{w(h + P(y;w,h))}{1-h - P(y;w,h)}.
$$
Using these formulae, Lemma~\ref{lem:formula_F_u} and Lemma~\ref{lem:enumeration_of_boundaries}, we get
\begin{align*}
\mathcal{F}(x,y,w,h) & =\sum_{n\geq 1,m\geq 1} K_{m,n}(x,y)w^mh^n\\
 & =\frac{1-xy}{(1-x)(1-y)}\left( \sum_{n\geq 1,m\geq 1}K_{m,n}^+(x,y)w^mh^n-\sum_{n\geq 1,m\geq 1}K_{m,n}^-(x,y)w^mh^n\right)\\
 & =\frac{1-xy}{(1-x)(1-y)}\left(\frac{(1-hx-w)hw +(w-h)P(x;w,xh)P(y;w,h/y) }{(1-w)(1-w-xh-P(y;w,h/y)-P(x;w,xh))}\right.\\
 & + \frac{(1-hx - w + xw)hP(y;w,h/y) -hw P(x;w,xh)}{(1-w)(1-w-xh-P(y;w,h/y)-P(x;w,xh))}\\
 & -\left.\frac{P(x;w,h) P(y;w,h) }{(1-w)(1-h-w-P(x;w,h)-P(y;w,h))}\right)\\
 & =\frac{(1-xy)(hw-P(x;w,h)P(y;w,h))}{(1-x)(1-y)(1-h-w-P(x;w,h)-P(y;w,h))}.
\end{align*}
This completes the proof of the theorem.
\end{proof}

\end{document}